\newcommand{\Z}{\mathbb Z}      
\newcommand{\R}{\mathbb R}
\renewcommand{\L}{\mathrm L}
\newcommand{\ex}{\mathrm{ex}}
\newcommand{\ds}{\displaystyle}
\newcommand{\lt}{\left}
\newcommand{\rt}{\right}
\newcommand{\beq}{\begin{equation}}
\newcommand{\eeq}{\end{equation}}
\newcommand{\bea}{\begin{aligned}}
\newcommand{\eea}{\end{aligned}}
\newcommand{\bal}{\begin{aligned}}
\newcommand{\eal}{\end{aligned}}
\def\one{{\mathbbm{1}}}
\DeclareMathOperator{\supp}{supp}
\providecommand{\abs}[1]{\lvert#1\rvert}
\providecommand{\Bigabs}[1]{\Big|#1\Big|}
\providecommand{\Abs}[1]{\left\lvert#1\right\rvert}
\providecommand{\norm}[1]{\lVert#1\rVert}
\providecommand{\sprod}[2]{\langle#1,#2\rangle}
\newcommand{\mw}{\mathcal{W}}
\newcommand{\pa}{\partial}
\newtheorem{theo}{Theorem}
\newtheorem{lem}{Lemma}
\newtheorem{prop}{Proposition}
\newtheorem{coro}{Corollary}
\newtheorem{rem}{Remark}
\let\origintodo\todo
\newcommand{\xtodo}[2][]{\origintodo[#1]{#2}\xspace}
\let\todo\xtodo
\definecolor{darkgreen}{rgb}{0.,0.7,0.1}
\begin{document}
\title[Linearly Transformed Particle method for aggregation equations]{Convergence of a Linearly Transformed Particle Method for Aggregation Equations}

\author[Campos Pinto]{Martin Campos Pinto}
\address{CNRS, UMR 7598, Laboratoire Jacques-Louis Lions, F-75005, Paris, France \newline
Sorbonne Universit\'es, UPMC Univ Paris 06, UMR 7598, Laboratoire Jacques-Louis Lions, F-75005, Paris, France}

\email{campos@ann.jussieu.fr}

\author[Carrillo]{Jos\'e A. Carrillo}
\address{Department of Mathematics, Imperial College London, SW7 2AZ, United Kingdom}
\email{carrillo@imperial.ac.uk}

\author[Charles]{Fr\'ed\'erique Charles}
\address{Sorbonne Universit\'es, UPMC Univ Paris 06, UMR 7598, Laboratoire Jacques-Louis Lions, F-75005, Paris, France \newline
CNRS, UMR 7598, Laboratoire Jacques-Louis Lions, F-75005, Paris, France}
\email{charles@ann.jussieu.fr}

\author[Choi]{Young-Pil Choi}
\address{Department of Mathematics, Imperial College London, SW7 2AZ, United Kingdom}
\email{young-pil.choi@imperial.ac.uk}

\subjclass[2010]{65M12, 35R09, 35Q82, 35Q92, 35Q35, 82C22}

\date{}

\keywords{Aggregation equations, linearly transformed particle method, smooth and singular interaction potentials}

\begin{abstract} We study a linearly transformed particle method for the aggregation equation with smooth or singular interaction forces. For the smooth interaction forces, we provide convergence estimates in $L^1$ and $L^\infty$ norms depending on the regularity of the initial data. Moreover, we give 
convergence estimates in bounded Lipschitz distance for measure valued solutions.
For singular interaction forces, we establish the convergence of the error between the approximated and exact flows up to the existence time of the solutions in $L^1 \cap L^p$ norm. 
\end{abstract}

\maketitle
%

\section{Introduction}

In this work, we are interested in showing the convergence of approximated particle schemes to the Cauchy problem for the so-called aggregation equation. This equation determines the evolution of a probability density $\rho(t,x)$ defined by
\begin{equation}\label{main-eq}
\left\{ \begin{array}{ll}
 \partial_t \rho(t,x) + \nabla \cdot (\rho u)(t,x) = 0, \quad x \in \R^d, \quad t >0, & \vspace{.2cm}\\
 u(t,x) = - (\nabla W * \rho(t)) (x),\quad x \in \R^d, \quad t >0, & \vspace{.2cm}\\
 \rho(0,x) = \rho^0(x)\geq 0, \quad x\in \R^d. & 
  \end{array} \right.
\end{equation}
Here, $-\nabla W(x-y)$ measures the interaction force that an infinitesimal particle located at $y\in\R^d$ will exert on a particle located at $x\in\R^d$. As a result, we will call $W$ the interaction potential. Since the total mass is preserved, without loss of generality, we assume
\begin{equation*}
 \int_{\R^d}  \rho(t,x)\,dx =\int_{\R^d} \rho^0(x)\,dx = 1 \qquad \forall t\geq 0.
\end{equation*}
The microscopic dynamics of $\mathcal N$ particles $X_i$, $i=1,\dots,\mathcal{N}$, interacting through the potential $W$ 
are given by
\begin{equation}\label{micro}
\dot{X}_i=-\sum_{j\neq i} m_j \;\nabla W (X_i-X_j) \,,\qquad i=1,\dots,\mathcal{N}\,,
\end{equation}
where the inertia term is assumed to be negligible compared to friction \cite{Mogilner2,Mogilner2003}. 
The macroscopic dynamics \eqref{main-eq} consists of a continuity equation where the velocity field is 
given by $u(t,x) = -(\nabla W * \rho(t))(x)$, which is the mean-field limit of the microscopic system 
when $\mathcal{N}\to \infty$ under certain conditions on the potential \cite{Dob,Golse,CCH,CDFLS}. 

Equation \eqref{main-eq} has attracted lots of attention in the recent years for three reasons: its gradient flow structure \cite{LT,Carrillo-McCann-Villani03,V,AGS,Carrillo-McCann-Villani06}, the blow-up dynamics for fully attractive potentials \cite{BCL,CDFLS,BLL,CJLV}, and the rich variety of steady states and their bifurcations both at the discrete \eqref{micro} and the continuous \eqref{main-eq} level of descriptions \cite{FellnerRaoul1,FellnerRaoul2,BT2,Raoul,CFP,CDFLS2,BLL,BCLR,BCLR2,soccerball,BUKB,BCY,CCP,CDM}. Furthermore, these systems are ubiquitous in mathematical modelling appearing in granular media models \cite{BCP,LT}, swarming models for animal collective behavior \cite{Dorsogna,KCBFL,CHM}, equilibrium states for self-assembly and molecules \cite{Wales1995,Rechtsman2010,Wales2010,Viral_Capside}, and mean-field games in socioeconomics \cite{DLR,BC} among others. 

We will focus the rest of the introduction on the well-posedness of the continuous equation \eqref{main-eq} and the numerical methods proposed for its approximation. The equation \eqref{main-eq} has the formal structure of being a gradient flow of a functional in the set of probability measures. Indeed, defining the interaction energy as
$$
\mathcal{F}[\mu]:=\frac12 \int_{\R^d}\int_{\R^d}W(x-y)\,d\mu(x)\,d\mu(y)
$$
for any probability measure $\mu$, we find $u=-\nabla \frac{\delta \mathcal{F}}{\delta \mu}$ where $\frac{\delta \mathcal{F}}{\delta \mu}$ is the formal variation of the functional $\mathcal{F}[\mu]$. This observation leads to a natural formal Lyapunov functional for the solutions of equation \eqref{main-eq}. In fact, we expect solutions to satisfy the identity
$$
\frac{d}{dt} \mathcal{F}[\rho(t)]=-\int_{\R^d} |\nabla W\ast \rho(t)|^2 \rho(t)\,dx
$$
for all $t\geq 0$. This structure can be rendered fully rigorous for $C^1$-potentials \cite{AGS} and it allows for mildly singular potentials at the origin \cite{CDFLS,CDFLS2,CJLV} provided the interaction potential has some convexity property called $\lambda$-convexity.

On the other hand, global in time unique weak measure solutions can be constructed for any probability measure as initial data under suitable smoothness assumptions on the interaction potential. In this work, whenever we refer to {\it smooth potentials}, we mean that the interaction potential satisfies $\nabla W \in \mw^{1,\infty}(\R^d)$. For smooth potentials, the approach introduced by Dobrushin for the Vlasov equation \cite{Dob} using the bounded Lipschitz distance between probability measures, see \cite{Golse,CCH,CCR} for further details, gives a well-posedness theory of weak measure solutions.

However, many of the interesting features related to blow-up dynamics and stationary states happen for potentials that are singular at the origin. 
Typical examples to bear in mind are combinations of repulsive attractive power-law potentials of the form $W(x)=\tfrac{|x|^a}{a}-\tfrac{|x|^b}{b}$ with $-d\leq b <a$ and the convention $\tfrac{|x|^0}{0}=\log |x|$, or fully attractive potentials $W(x)=\tfrac{|x|^a}{a}$ 
with $a>-d$, suitably cut-off at infinity. 
In this work, whenever we refer to {\it singular potentials} we mean that the interaction potential 
is not smooth but satisfies 
\begin{equation*}
|  \nabla W(x)|\leq \frac{C}{|x|^{\alpha}} \quad \textrm{and} \quad
|  D^2 W(x)|\leq \frac{C}{|x|^{1+\alpha}} \quad \mbox{with} \quad - 1 < \alpha  < d-1
\end{equation*}
for some constant $C>0$, and in addition we assume that $\nabla W$ is bounded away from the origin if
$\alpha < 0$. These conditions allow for singularities at the origin up to Newtonian but not including it. 
In particular, our singular potentials are such that $\nabla W \in \mw^{1,q}_{\rm loc}(\R^d)$ with a range depending on $\alpha$: 
$1 \le q < \frac{d}{\alpha+1}$. Note that the power-law potentials satisfy locally the conditions of being a singular potential in the range $2-d<b < 2$ for repulsive-attractive and in the range $2-d<a < 2$ for fully attractive. The various well-posedness theories for measure solutions fail as soon as the potential becomes singular at the origin.  However, weak solutions in Lebesgue spaces can be obtained. A local-in-time well-posedness theory was obtained in \cite{BTR,CCH} for initial data in $(L^1\cap L^{p})(\R^d)$ with $p = q'$ the conjugate exponent of $q$,
and in \cite{BCL,BLL} a local-in-time well-posedness theory for initial data in $(L^1\cap L^\infty) (\R^d)$ was developed  for singularities up to and including a Newtonian singularity at the origin, corresponding to 
$\alpha = d-1$. In this work, we will use the setting introduced in \cite{CCH}. The Newtonian case is very specific because of the relation between the divergence of the velocity field and the density becomes local.

Under the above assumptions of either smooth or singular potentials, the proofs of the global-in-time well-posedness of weak measure solutions and the local-in-time well-posedness of weak solutions for initial data in $(L^1 \cap L^p)(\R^d)$ spaces are essentially based on the fact that the velocity field is regular enough to have meaningful characteristics. It is proved in \cite{Dob, Golse, BTR,CCH} that the velocity field of the constructed solutions is continuous in time and Lipschitz continuous in space. Then, the flow map $\Phi_t (x)$, defined by the unique solution of the characteristic system
$$
\left\{
\begin{aligned}
&\frac{dX}{dt}(t)=u(t,X(t)),\\
&X(s)=x,
\end{aligned}
\right.
$$
is a diffeomorphism for all times $t\geq 0$. In all cases, the solution built in \cite{Dob,Golse,BTR,CCH} is obtained by characteristics and given by $\rho(t) = \Phi_t \# \rho^0$. Here,  $\mathcal{T}\#\mu$ denotes the push-forward of a measure through a measurable map $\mathcal{T}:\R^d \longrightarrow \R^d$ defined as $\mathcal{T}\#\mu [K] := \mu[\mathcal{T}^{-1}(K)]$ for all Borel sets $K\subset\R^d$, or equivalently
$$ 
\int_{\R^d} \varphi\, d(\mathcal{T}\#\mu) = \int_{\R^d} (\varphi\circ \mathcal{T})\, d\mu \qquad \mbox{for all } \varphi\in C_b(\R^d)\,.
$$

A very interesting question is the rigorous derivation of the continuum description \eqref{main-eq} starting from the microscopic dynamics \eqref{micro} for both regular and singular potentials. This is the so-called mean-field limit problem. The mean-field limit results contain as a by-product convergence results for the classical particle method. More precisely, proving that \eqref{main-eq} is the mean-field limit of the system \eqref{micro} as $\mathcal{N}\to \infty$ is equivalent to show that the empirical measure
$$
\mu_{\mathcal{N}}(t) = \frac1{\mathcal{N}}\sum_{i=1}^\mathcal{N} \delta_{X_i(t)}
$$
converges weakly in measure sense to the solution of \eqref{main-eq} provided that this weak convergence holds initially. Even if the particle method is proved to be convergent of order $\frac1{\mathcal{N}}$, the convergence error is only controlled in the bounded Lipschitz or Wasserstein-type distances between measures \cite{Dob,Golse,CCH,CDFLS}.

Vortex-blob methods, originally introduced for the 2D  Euler equations for incompressible fluids, see \cite{MB} and the references therein, have also been adapted recently to the aggregation equation \cite{BC-blob} with fixed shapes, where the approximate densities are shown to converge with arbitrary orders but only in negative Sobolev norms.
Particle methods were also used in plasma physics for the Vlasov-Poisson system \cite{CR}, 
where they are usually called smooth Particle In Cell (PIC) methods. 

In the Linearly Transform Particle (LTP) method, introduced by Campos Pinto in \cite{Campos} following an idea of Cohen and Perthame \cite{CP},  particles are pushed on to discrete times according to an approximation of  the exact flow as in standard particle methods. Moreover, particles have their own shape, which is transformed in the discrete evolution in order to better approach the local flow using a linearization of the exact flow. To our knowledge the LTP method has only been used for a linear transport equation \cite{CP} or for a Vlasov-Poisson system \cite{CC} involving measure-preserving characteristic flows. The technical difficulties posed by the deformation of the flows in our present case have been overcome by detailed estimates of the Jacobian matrices and determinants. These estimates have allowed us to control the error on the densities via the errors of the flows to finally obtain the convergence results. Certain Sobolev regularity is needed on the initial data to obtain convergence of the LTP method in Lebesgue spaces for both smooth and singular potentials. 
However, a general result of convergence for weak measure solutions is obtained in an appropriate distance for measures. 
    
Let us finally mention that other numerical methods have been proposed in the literature for the aggregation equation. In \cite{CCH2}, the authors proposed a finite volume scheme which is shown to be energy preserving, i.e., it keeps the property that the energy functional is dissipated along the semidiscrete flow. Finite volume and finite difference schemes have been shown to be convergent to weak measure solutions of the aggregation equations for mildly singular potentials in \cite{JV,CJLV}.

In this work, we extend the LTP method to the aggregation equation seen as one of the most important representatives of a class of nonlinear continuity equations with non divergence free velocity fields in any dimensions. We start by summarizing the basic ideas of the numerical LTP method in Section 2 together with the preliminaries and notations used in this work. Section 3 is devoted to give convergence results for smooth potentials in Lebesgue spaces. Depending on the regularity of the initial data, we will be able for smooth potentials to control errors in $L^1$ and $L^\infty$. For initial data just being a probability measure, we will show in Section 4 the convergence in bounded Lipschitz distance. In the case of singular potentials, we will control in Section 5 the error upto the existence time of the solution of \eqref{main-eq} in $L^1$ and $L^p$ with $p$ suitably chosen. We finally show in Section 6 the performance of this method in one dimension validating the numerical implementation with explicit solutions and making use of it to study certain not well-known qualitative features of the evolution of \eqref{main-eq} with several smooth and singular potentials.

\section{Preliminaries}
\setcounter{equation}{0}

\subsection{Basic properties of the exact flow}
In the setting of our main results, the velocity field of the exact solution to \eqref{main-eq} is always continuous in $t$ and Lipschitz continuous in $x$. The solution of the characteristic system
\begin{equation*} 
\left\{
\begin{aligned}
&\frac{dX}{dt}(t)=u(t,X(t))\\
&X(s)=x,
\end{aligned}
\right.
\end{equation*} 
is well-defined and it has unique global in time solutions for all initial data $x\in\R^d$. Moreover, the general solution of the characteristic system is a diffeomorphism in $\R^d$. The general flow map will be denoted by $F^{s,t}(x)$ for all $t,s\in\R$ and $x\in \R^d$. 

As discussed in the introduction, the solutions to \eqref{main-eq} can always be expressed as $\rho(t) = F^{0,t} \# \rho^0$ or equivalently as
$$
\rho(t,x)=\rho^0\left(F^{t,0}(x)\right) j^{t,0}(x)
\quad
\textrm{with} \quad
 j^{t,0}(x) = \det( J^{t,0}(x)),\quad  J^{t,0}(x) =   D F^{t,0}(x)\,.
$$
The flow map satisfies
\begin{equation} \label{expFex}
 F^{s,t}(x) = x + \int_s^t u(\tau,F^{s,\tau}(x))d\tau
            = x - \int_s^t (\nabla W * \rho(\tau))(F^{s,\tau}(x))d\tau,
\end{equation}
and the Jacobian matrix and its determinant satisfy the differential equations
\begin{equation}\label{matrixodes}
\frac{d}{dt}J^{s,t}(x) = D u(t,F^{s,t}(x)) J^{s,t}(x) \quad \mbox{and} \quad \frac{d}{dt}j^{s,t}(x) = \nabla \cdot u(t,F^{s,t}(x)) j^{s,t}(x).
\end{equation}
Using  $u(\tau,y) = -(\nabla W * \rho(\tau))(y)$, this yields 
\begin{align}
J^{s,t}(x) - I_{d} &= \int_s^t \! D u(\tau, F^{s,\tau}(x)) J^{s,\tau}(x)d\tau \nonumber\\
           &= - \int_s^t \! (D^2 W * \rho(\tau))(F^{s,\tau}(x)) J^{s,\tau}(x)d\tau \label{expJex}
\end{align}
and
\begin{align}
j^{s,t}(x) &= \exp\lt( -\int_s^t \nabla \cdot u(\tau,F^{s,\tau}(x) d\tau\rt) \nonumber \\
&= \exp\lt( -\int_s^t (\Delta_x W * \rho(\tau))(F^{s,\tau}(x)) d\tau\rt).\label{exp_ja}
\end{align}
Estimates are then easily derived when
$u \in L^\infty(0,\infty;\mw^{1,\infty}(\R^d))$. We will write $L:=\sup_{t\in[0,\infty)} \|u(t,\cdot)\|_{\mw^{1,\infty}}$. For instance, using \eqref{matrixodes} and $J^{s,s}(x) = I_d$ we find
\begin{equation}\label{majJex}
\sup_{x \in \R^d} \abs{J^{s,t}(x)} \leq \exp \lt( C L \abs{t-s}\rt),
\end{equation}
and in particular the characteristic flow is Lipschitz (relative to any norm in $\R^d$),
\begin{equation}  \label{majFexlip} 
|F^{s,t}|_{Lip} \leq \exp \lt( C L \abs{t-s}\rt).
\end{equation}
Furthermore, we derive from \eqref{expJex} and \eqref{majJex} that
\begin{equation} \label{maj1moinsJex}
\sup_{x \in \R^d} \abs{I_{d} - J^{s,t}(x)} \leq (t-s)\exp \left(  C L \abs{t-s}\right)
\end{equation}
and using \eqref{exp_ja} we also find
\beq\label{est_reja}
\exp\lt(-C L \abs{t-s}\rt) \le j^{s,t}(x) \leq \exp\lt(C L \abs{t-s}\rt) \quad \text{ for } \quad x \in \R^d
\eeq
and 
\beq\label{est_reja-1}
\|j^{s,t}-1\|_{L^\infty} \leq C L |t-s| \exp\lt(C L(t-s)\rt).
\eeq
Let us remark that the previous estimates \eqref{majJex}-\eqref{est_reja-1} can also be obtained in a time interval $[0,T]$ for locally Lipschitz velocity fields $u \in L^\infty(0,T;\mw^{1,\infty}(\R^d))$ for some $T>0$, with constant $L_T:=\sup_{t\in[0,T]} \|u(t,\cdot)\|_{\mw^{1,\infty}}$. These estimates will be used in Section 5, where the dependence on T of the Lipschitz constant will be omitted for the sake of simplicity.

\subsection{Linearly Transformed Particles} \label{secmethodeLTP}

As in standard particle methods, the density $\rho$ is represented with weighted macro-particles, and as in smooth particle methods, particles have here a finite and smooth shape. 
Thus, we approximate the initial density $\rho^0$ on a Cartesian grid of size $h > 0$ by
\begin{equation}
 \label{defrhoh0}
\rho_{h}^0(x)=\sum_{k\in \Z^d} \omega_k \varphi_{h,k}^0(x)
\end{equation}
with particle shapes obtained by scaling and translating a reference function, i.e.,
\begin{equation}
 \label{phik0}
\varphi_{h,k}^0(x)=\frac{1}{h^d} \varphi\left(\frac{x-x_k^0}{h}\right),
\qquad x_k^0=kh.
 \end{equation}
Here the reference shape is assumed to have a compact support $\supp(\varphi) \subset B(0,R_o)$,
be bounded and satisfy
$$ \label{prop-phi}
\sum_{k\in\Z^d} \varphi(x - k) = 1 \quad \text{for $x \in \R^d$}
\qquad \text{ and } \qquad 
\int_{\R^d} \varphi =1.
$$  
In this work we will require that the shape functions are Lipschitz, and
we can either consider for the reference shape the tensor-product hat function 
\begin{equation} \label{B1-spline}
    \varphi(x) = \prod_{1\le i\le d} \max(1-|x_i|,0).   
\end{equation} 
or the  B3-spline
 \begin{equation}\label{B3-spline}
\varphi(x)= \frac{1}{6}\left\{ 
\begin{array}{cc}
\left(2-|x|\right)^3                               
    \quad & \textrm{if }1 \leq |x| < 2,   \\
   \displaystyle  4-6x^2 +3 |x|^3
    \quad & \textrm{if } 0\leq |x| < 1,   \\
    0  \quad & \textrm {otherwise.}
\end{array}
\right.
 \end{equation}
As for the weights $\omega_k = \omega_k(h,\rho^0)$, they are usually defined as 
\beq \label{simple-wk}
\omega_k = \int_{x_k^0 + \lt[-\frac h2, \frac h2\rt]^d} \rho^0(x) dx,
\eeq
however this will not be sufficient to prove the convergence of our particle scheme 
without additional smoothness assumptions on the initial density $\rho^0$. Indeed, using standard arguments (see e.g. \cite{Campos, Ciarlet:1991}) based on the fact that the approximation $\rho^0 \mapsto \rho^0_h = \sum_{k\in \Z^d} \omega_k \varphi_{h,k}^0$ is local, bounded in any $L^p$ space and preserves the affine functions, one easily verifies the following estimate.
    \begin{prop}\label{prop-pos-weights} If $\rho^0_h$ is initialized as in \eqref{defrhoh0} with 
    weights and shape function given by \eqref{simple-wk} and \eqref{phik0}, respectively, 
    then we have
    \beq \label{rho0-error}
    \norm{\rho^0-\rho^0_h}_{L^p} \le C h^s \norm{\rho^0}_{\mw^{s,p}}
    \eeq
    for $s \in \{0, 1, 2\}$, $1\le p \le \infty$ and a constant $C$ independent of $\rho^0$.
    \end{prop}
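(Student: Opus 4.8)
The plan is to read $\rho^0 \mapsto \rho^0_h =: Q_h\rho^0$ as a quasi-interpolation operator and to run the classical Deny--Lions / Bramble--Hilbert argument (see e.g. \cite{Ciarlet:1991, Campos}). First I would record the three structural facts about $Q_h$ that drive the estimate. \emph{Locality with bounded overlap:} since $\supp\varphi\subset B(0,R_o)$, the shape $\varphi^0_{h,k}$ is supported in $B(x_k^0, R_o h)$ and $\omega_k$ only sees $\rho^0$ on the cell $e_k := x_k^0+[-\tfrac h2,\tfrac h2]^d$; hence on any grid cube $C$ of side $h$ only a number $N=N(d,R_o)$ of indices contribute to $\rho^0_h|_C$, and the patches $\widetilde C := \bigcup\{e_k : k \text{ contributes}\}$ have finite overlap. \emph{$L^p$-stability:} from $\abs{\omega_k}\le \norm{\rho^0}_{L^1(e_k)} \le h^{d/p'}\norm{\rho^0}_{L^p(e_k)}$, boundedness of $\varphi$, and the scaling of $\varphi^0_{h,k}$, one gets $\norm{Q_h v}_{L^p(C)} \le C\norm{v}_{L^p(\widetilde C)}$ with $C$ independent of $h$ and $p$. \emph{Reproduction of degree $\le 1$:} the identity $\sum_k\varphi(\,\cdot-k) = 1$ gives $Q_h \mathbf 1 = \mathbf 1$; and for affine $v(x)=a+b\cdot x$ the symmetry of $e_k$ about $x_k^0$ kills the linear part of the integral, so $\omega_k = h^d v(x_k^0)$ and therefore $Q_h v(x) = \sum_k v(x_k^0)\,\varphi\big(\tfrac{x-x_k^0}{h}\big) = v(x)$, the last equality using the first-moment identity $\sum_{k\in\Z^d} k\,\varphi(y-k) = y$, which holds (coordinate by coordinate, by the tensor structure) for both the hat function \eqref{B1-spline} and the B3-spline \eqref{B3-spline}.

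With these in hand the proof is routine. The case $s=0$ is immediate from stability and a triangle inequality. For $s\in\{1,2\}$ I would work cube by cube: fix a grid cube $C_j$ with patch $\widetilde C_j$, and let $P_j$ be an averaged Taylor polynomial of $\rho^0$ on $\widetilde C_j$ of degree $\le s-1$, so that the scaled Bramble--Hilbert lemma on $h^{-1}\widetilde C_j$ (whose shape-regularity constant is uniform, since the patches are $O(1)$ unions of cubes up to $h$-scaling) gives $\norm{\rho^0 - P_j}_{L^p(\widetilde C_j)} \le C h^s \abs{\rho^0}_{\mw^{s,p}(\widetilde C_j)}$. Since $Q_h P_j = P_j$ on $C_j$ by reproduction, we have on $C_j$ that
\[
\rho^0 - \rho^0_h = (\rho^0 - P_j) - Q_h(\rho^0 - P_j),
\]
whence $\norm{\rho^0-\rho^0_h}_{L^p(C_j)} \le (1+C)\norm{\rho^0-P_j}_{L^p(\widetilde C_j)} \le C h^s\abs{\rho^0}_{\mw^{s,p}(\widetilde C_j)}$. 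Taking $p$-th powers and summing over $j$ (or taking a supremum if $p=\infty$) and using the bounded overlap of the patches yields $\norm{\rho^0-\rho^0_h}_{L^p(\R^d)} \le C h^s \abs{\rho^0}_{\mw^{s,p}(\R^d)} \le C h^s\norm{\rho^0}_{\mw^{s,p}(\R^d)}$; if $\rho^0\notin\mw^{s,p}$ there is nothing to prove.

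The one point deserving care is the degree-$1$ reproduction for the \emph{specific} weight rule \eqref{simple-wk}: it is here that both $\int\varphi = 1$ (entering through $\omega_k$) and the partition-of-unity / first-moment identities for $\varphi$ are used, and it is exactly why the statement stops at $s=2$ --- neither \eqref{B1-spline} nor \eqref{B3-spline} combined with cell-average weights reproduces quadratics, so no $h^3$ rate is available with this definition of $\omega_k$. Everything else --- the scaled Bramble--Hilbert estimate and the finite-overlap summation, including the adaptation to $p=\infty$ --- is standard.
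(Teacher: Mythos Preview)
Your argument is correct and is precisely the approach the paper has in mind: the paper does not give a detailed proof but only records that $Q_h$ is local, bounded in every $L^p$, and preserves affine functions, and then defers to the standard Bramble--Hilbert / Deny--Lions machinery in \cite{Campos, Ciarlet:1991}. You have simply written out that standard argument in full, including the verification of affine reproduction for the specific weight rule \eqref{simple-wk}, which is exactly what the paper leaves implicit.
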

    In our analysis we will need second-order estimates which are then available for $\rho^0 \in \mw^{2,p}(\R^d)$.
    However, if we allow negative weights then second-order estimates are also available in a dual norm,
    as follows. Consider weights defined as 
    \beq \label{defweights}
        \omega_k = \int_{\R^d} \tilde \varphi_{h,k}^0(x) \rho^0(x)dx
    \eeq
    with integration kernels bi-orthogonal
    to the shape functions in the sense that
    \begin{equation} \label{duality}
    \int_{\R^d} \varphi_{h,k}^0 \tilde \varphi_{h,k'}^0 = \delta_{k,k'}
    \end{equation}
    holds with $\delta_{k,k'}$ the Kronecker symbol.
    Similar to the shape functions, they can be obtained by scaling and translating 
    a reference $\tilde \varphi$ (assumed again compactly supported, bounded and satisfying \eqref{prop-phi})
    with a different normalization, namely
    \beq \label{tilde-phik0}
    \tilde \varphi_{h,k}^0(x)=\tilde \varphi\left(\frac{x-x_k^0}{h}\right).
    \eeq
For instance if $\varphi$ is  the above tensor-product hat function \eqref{B1-spline}
 then for the integration kernel we may take 
    $\tilde \varphi(x) = \prod_{i\le d} 
    \big(\frac 32 \one_{[-\frac 12,\frac 12]} - \tfrac 12 \one_{[-1,-\frac 12] \cup [\frac 12,1]}\big)(x_i),$ 
see Figure \ref{figB1etB1ortho}.

\begin{figure}[h]
\centering
\includegraphics[width=5cm]{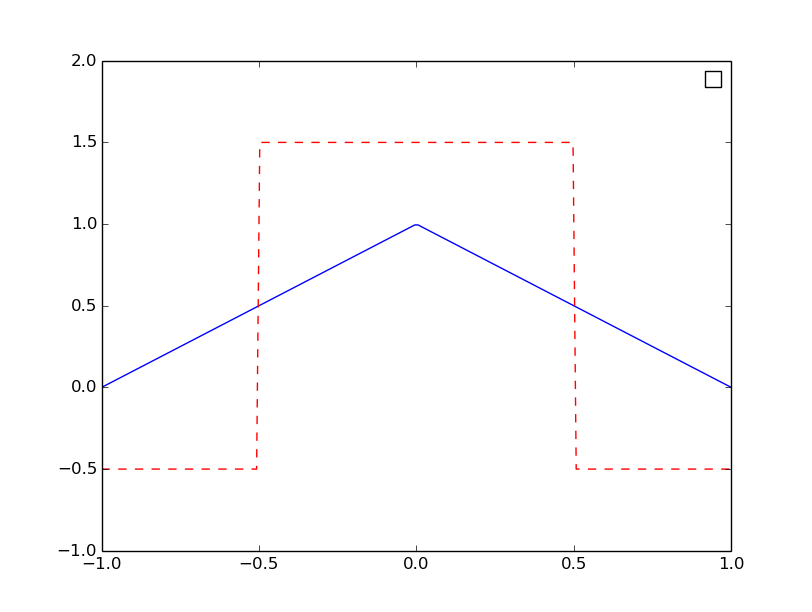}
\caption{\label{figB1etB1ortho} A piecewise affine shape function and its bi-orthogonal kernel (dotted line). 
Both functions vanish outside $[-1,1]$.}
\end{figure}   

Notice that estimate~\eqref{rho0-error} still holds with these weights.
    Now, from the duality \eqref{duality} we can derive a convenient second-order estimate
    which only relies on the first-order smoothness of $\rho^0$. It is expressed
in the dual norm
$$
\ds\norm{w}_{\mw^{-1,p}} := \sup_{v \in \mw^{1,q}(\R^d)} \sprod{w}{v} / \norm{v}_{\mw^{1,q}}\,,
$$
where $q$ is the conjugate exponent of $p$ and 
$\sprod{w}{v}$ is the duality pair that coincides with the integral of the product $wv$ 
as soon as the latter is integrable. 

\begin{prop}\label{prop_ini_app}If $\rho^0_h$ is initialized as in \eqref{defrhoh0} with shape functions 
and weights satisfying properties \eqref{phik0}-\eqref{prop-phi} and \eqref{defweights}-\eqref{tilde-phik0}, 
we have 
\beq
\label{rho0-error-neg}
\norm{\rho^0-\rho^0_h}_{\mw^{-1,p}} \le C h^2 \norm{\rho^0}_{\mw^{1,p}}
\eeq    
for $1 \le p \le \infty$, with a constant $C$ independent of $h$.
\end{prop}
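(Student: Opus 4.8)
The plan is to realize the initialization as a linear operator $\Pi_h\colon \rho \mapsto \rho^0_h = \sum_{k\in\Z^d}\bigl(\int \tilde\varphi_{h,k}^0\,\rho\bigr)\varphi_{h,k}^0$, and to exploit that, thanks to the bi-orthogonality \eqref{duality}, this operator is a \emph{projection}: one has $\Pi_h \varphi_{h,k'}^0 = \sum_{k}\delta_{k,k'}\varphi_{h,k}^0 = \varphi_{h,k'}^0$, and since the range of $\Pi_h$ is spanned by the $\varphi_{h,k}^0$, it follows that $\Pi_h^2 = \Pi_h$. Exactly as for the approximation operator behind Proposition~\ref{prop-pos-weights}, $\Pi_h$ is local --- the value of $\Pi_h\rho$ on the cell $Q_k := x_k^0 + [-\tfrac h2,\tfrac h2]^d$ depends only on $\rho$ restricted to a fixed enlarged patch $\widetilde Q_k$, and the $\widetilde Q_k$ have a finite overlap --- and it is bounded on every $L^r(\R^d)$, $1\le r\le\infty$, uniformly in $h$, by a standard scaling argument using that $\varphi,\tilde\varphi$ are bounded and compactly supported. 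Moreover $\Pi_h$ reproduces constants: $\Pi_h 1 = \sum_k\bigl(\int\tilde\varphi_{h,k}^0\bigr)\varphi_{h,k}^0 = \sum_k h^d\varphi_{h,k}^0 = 1$, by $\int\tilde\varphi = 1$ and the partition of unity \eqref{prop-phi}.

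Next I introduce the formal adjoint
\[
\Pi_h^* v := \sum_{k\in\Z^d}\Bigl(\int_{\R^d}\varphi_{h,k}^0\, v\Bigr)\tilde\varphi_{h,k}^0 ,
\]
which satisfies $\sprod{\Pi_h\rho}{v} = \sprod{\rho}{\Pi_h^* v}$ whenever the pairings make sense (expand both sides and use the local finiteness of the sums). It enjoys the same structural properties: it is local, bounded on every $L^r$ uniformly in $h$, and reproduces constants, $\Pi_h^* 1 = \sum_k\bigl(\int\varphi_{h,k}^0\bigr)\tilde\varphi_{h,k}^0 = \sum_k\tilde\varphi_{h,k}^0 = 1$, using $\int\varphi = 1$ and the partition of unity satisfied by $\tilde\varphi$; by transposition it is also a projection. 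The key algebraic identity is then
\[
\sprod{\rho^0-\rho^0_h}{v} = \sprod{\rho^0-\Pi_h\rho^0}{v-\Pi_h^* v}\qquad\text{for all } v\in\mw^{1,q}(\R^d),
\]
because $\sprod{\rho^0-\Pi_h\rho^0}{\Pi_h^* v} = \sprod{\Pi_h(\rho^0-\Pi_h\rho^0)}{v} = \sprod{\Pi_h\rho^0-\Pi_h^2\rho^0}{v} = 0$.

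It remains to estimate the right-hand side. Locality and $L^r$-boundedness of $\Pi_h$ (resp.\ $\Pi_h^*$), combined with the reproduction of constants, give by the Deny--Lions/Bramble--Hilbert argument already underlying Proposition~\ref{prop-pos-weights} --- write $w-\Pi_h w = (w-c)-\Pi_h(w-c)$ for a suitable constant $c$ on $\widetilde Q_k$ and invoke Poincar\'e's inequality --- the cell-wise first-order bounds
\[
\norm{\rho^0-\Pi_h\rho^0}_{L^p(Q_k)} \le Ch\,\abs{\rho^0}_{\mw^{1,p}(\widetilde Q_k)},
\qquad
\norm{v-\Pi_h^* v}_{L^q(Q_k)} \le Ch\,\abs{v}_{\mw^{1,q}(\widetilde Q_k)},
\]
valid for every $1\le p\le\infty$ with $q$ the conjugate exponent, with constants independent of $h$ and $k$. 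Applying H\"older's inequality on each $Q_k$, summing over $k$, and then using the discrete H\"older inequality in $k$ together with the finite overlap of the patches yields
\[
\abs{\sprod{\rho^0-\rho^0_h}{v}} \le \sum_k \norm{\rho^0-\Pi_h\rho^0}_{L^p(Q_k)}\norm{v-\Pi_h^* v}_{L^q(Q_k)} \le Ch^2\,\abs{\rho^0}_{\mw^{1,p}}\abs{v}_{\mw^{1,q}} \le Ch^2\norm{\rho^0}_{\mw^{1,p}}\norm{v}_{\mw^{1,q}},
\]
and taking the supremum over $v$ gives \eqref{rho0-error-neg}; the endpoint cases $p\in\{1,\infty\}$ are identical, replacing the relevant $\ell^p$ sum by a supremum.

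The point to get right --- and the reason the simple weights \eqref{simple-wk} do not suffice --- is the projection property $\Pi_h^2=\Pi_h$, which is precisely what the bi-orthogonality \eqref{duality} buys and which is responsible for the extra power of $h$ relative to the plain $L^p$ estimate \eqref{rho0-error}; everything else is the now-standard verification that $\Pi_h$ and $\Pi_h^*$ are local, $L^r$-bounded uniformly in $h$, and reproduce constants, plus the bookkeeping of the patch overlaps and endpoint exponents in the final summation.
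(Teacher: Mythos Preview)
Your proof is correct and follows essentially the same Aubin--Nitsche style duality argument as the paper: your adjoint $\Pi_h^* v$ is precisely the approximation $\tilde v_h := \sum_k \sprod{v}{\varphi_{h,k}^0}\tilde\varphi_{h,k}^0$ that the paper introduces, and your identity $\sprod{\rho^0-\Pi_h\rho^0}{v} = \sprod{\rho^0-\Pi_h\rho^0}{v-\Pi_h^* v}$ is the same orthogonality relation the paper obtains directly from $\sprod{\rho^0-\rho^0_h}{\tilde\varphi_{h,k}^0}=0$. The only cosmetic difference is that the paper applies H\"older globally after invoking the first-order $L^p$ and $L^q$ estimates from Proposition~\ref{prop-pos-weights}, whereas you work cell by cell and sum; both routes yield the same bound.
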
 
\begin{proof}
It follows from the duality relation \eqref{duality} that
$\sprod{\rho^0 - \rho^0_h}{\tilde \varphi_{h,k}^0} = 0$ for all $k$. In particular,
given $v \in \mw^{1,\infty}(\R^d)$ we have    
$$
\sprod{\rho^0 - \rho^0_h}{v} = \sprod{\rho^0 - \rho^0_h}{v - \tilde v_h} 
$$
with $\tilde v_h := \sum_{k \in \Z^d} \sprod{v}{\varphi_{h,k}^0} \tilde \varphi_{h,k}^0$ and standard arguments
show that the approximation $v \mapsto \tilde v_h$ satisfies an error estimate similar to \eqref{rho0-error}
for $s=1$.
Using the H{\"o}lder inequality this gives
$$
\sprod{\rho^0 - \rho^0_h}{v} \le \norm{\rho^0 - \rho^0_h}_{L^p} \norm{v - \tilde v_h}_{L^q}
\le C h^2 \norm{\rho^0}_{\mw^{1,p}}\norm{v}_{\mw^{1,q}}
$$
and the proof is completed due to the definition of the $\mw^{-1,p}(\R^d)$ norm.
\end{proof}

We observe that both the above initializations yield
\begin{equation} \label{majweight}
\sup_{k\in \Z^d} \abs{\omega_k}  \leq  C h^{d/q} \|\rho^0\|_{L^{p}} \qquad \text{ where } \qquad \tfrac 1q + \tfrac 1p = 1\,,
\end{equation}
and since the shape functions are assumed to be non-negative, \eqref{prop-phi} gives
\begin{equation} \label{majweight2}
\| \rho_h^0\|_{L^1} \le \sum_{k\in \Z^d} \abs{\omega_k} \le C \| \rho^0 \|_{L^1} \le C\,,
\end{equation}
with a constant depending only on $\tilde \varphi$.

We now describe the LTP method. As mentioned in the introduction, compared to standard particle methods, the LTP method follows the shape of smooth particles. Therefore we need to track not only the particle positions but also their deformations given by the Jacobian matrices.
Given discrete trajectories $x^n_k$ approximating the exact ones $F^{0,t_n}(x^0_k)$ on the discrete times 
$$
t_n := n\Delta t,  \qquad n = 0, 1, \ldots, N := T / \Delta t,
$$ 
and non singular approximations $J^n_k$ of the forward Jacobian matrices
$J^{t_n,t_{n+1}}(x^n_k)$, 
the particle shapes $\varphi^{n+1}_{h,k}$ are recursively defined as the push-forward of $\varphi^{n}_{h,k}$ along the affine flow 
\begin{equation} \label{Fnk}
    F_{h,k}^{n}: \, x \mapsto x_k^{n+1} + J_{k}^{n} (x-x_k^{n})\,,
\end{equation}
which approximates the exact flow $F^{t_{n},t_{n+1}}$ around $x_k^{n}$.
Here $x_k^{n+1}$ can also be seen as an approximation to $F^{t_{n},t_{n+1}}(x_k^{n})$, as will be specified below. In short, we define
\begin{equation*}
\varphi^{n+1}_{h,k} := F_{h,k}^{n}  \# \varphi^{n}_{h,k} = \frac{1}{j_k^{n}} \varphi^{n}_{h,k}\circ (F_{h,k}^{n})^{-1}\,,
\end{equation*}
where $j_k^{n}:=\det(J_k^n) > 0$. Starting from $\varphi^{0}_{h,k}$ defined as in \eqref{phik0}, this gives particles of the form
\begin{equation}
 \label{phikn}
     \varphi_{h,k}^n(x) := \frac{1}{h_k^n} \varphi \left( \frac{D_k^n (x- x_k^n)}{h}  \right)\,,
\end{equation}
where the deformation matrix $D_k^{n}$ and the particle volume $h_k^{n}$ are defined by
\begin{equation} \label{defhkn}
    \begin{cases}
    D_k^{n+1} := D_k^{n} \left( J_k^{n}\right)^{-1}
    \\
    h_k^{n+1} := j_{k}^{n} h_k^{n} = \det(J_k^{n}) h_k^{n}
    \end{cases}
    \text{ with } \quad
    \begin{cases}
    D_k^{0} := I_{d}
    \\
    h_k^{0} := h^d
    \end{cases}\,.
\end{equation} 
It follows from the above process that $D_k^n$ is an approximation to the backward 
Jacobian matrix $J^{t_n,0}(x^n_k)$, whereas $h_k^n$ approximates the elementary volume 
$h^d$ multiplied by the Jacobian determinant of the forward flow $F^{0,t_n}$ 
at $x^0_k$. Moreover, the particle shape $\varphi_{h,k}^n$ is the push-forward of 
$\varphi^0_{h,k}$ along the integrated flow 
\beq\label{def_fbar}
\overline F_{h,k}^n := F^{n-1}_{h,k} \circ \cdots \circ F^{0}_{h,k} : x \mapsto x^n_k + \overline J^n_k (x-x^0_k)
\quad \text{ where } \quad  \overline J^n_k := (D^n_k)^{-1}
\eeq
which can be seen as a linearization of $F^{0,t_n}$ around $x^0_k$ (for $n=0$ we set
$\overline F_{h,k}^0 = I$ since $D^0_k = I_d$).
Indeed, it follows from the above definitions that
\beq \label{phikn-push-phi0k}
    \varphi_{h,k}^n = \overline F_{h,k}^{n}  \# \varphi^{0}_{h,k}\,,
\eeq
and we easily verify that 
$$
h_k^n = h^d
\det(J_{h,k}^{n-1}) \, \cdots \, \det(J_{h,k}^0)  =  \frac{h^d}{\det(D_{h,k}^n)} \approx h^d \det(J^{0,t_n}(x^0_k)).
$$
Finally, the LTP approximation of the density at time $t_n$ is defined as
\begin{equation}\label{defrhohn}
        \rho_{h}^n(x) := \sum_{k\in \Z^d} \omega_k \varphi_{h,k}^n(x)
\end{equation}
with weights $\omega_k$ constant in time and computed as in \eqref{simple-wk} or \eqref{defweights}.
According to \eqref{phikn-push-phi0k}, we have 
    $\int \varphi^{n}_{h,k} =\int \varphi_{h,k}^0 =\int \varphi $, 
    and thus the conservation of mass ($\int \rho_{h}^n = \int \rho_{h}^0$) holds at the discrete level. Moreover,
    using the fact that the particle shapes are non-negative, we find as in \eqref{majweight2}
\begin{equation} \label{L1rhon}
    \| \rho_h^n\|_{L^1} \le \sum_{k\in \Z^d} \norm{\omega_k \varphi^{n}_{h,k}}_{L^1} 
    = \sum_{k\in \Z^d} \abs{\omega_k} \le C \| \rho^0 \|_{L^1} = C, \qquad  n\geq 0.
\end{equation}

\subsection{Approximated Jacobian matrices and particle positions} 

To complete the description of the numerical method 
\eqref{phikn}-\eqref{defhkn}, \eqref{defrhohn},
we are left to specify how to compute the particle center $x^{n+1}_k$ and the discrete Jacobian matrix $J^n_k$ involved in the affine flow \eqref{Fnk}.
Before doing so we observe that if the matrices $D^2W(x)$ and $D^2W(y)$ commute for all $x$ and $y$, then  the exact solution to the ODE \eqref{matrixodes} takes an exponential form. However, in the general case the matrix $J^{t_n,t_{n+1}}(x)$ is {\it not} equal to
\beq \label{expJtilde}
\tilde J^{t_n,t_{n+1}}(x) := \exp \left( -\int_{t_n}^{t_{n+1}} (D^2 W * \rho(\tau))(F^{t_n,\tau}(x))d\tau \right)
\eeq
but the difference is small, as shown next.

\begin{prop} \label{propJexJtilde}
If $u \in L^{\infty}(0,T;\mw^{1,\infty}(\R^d))$, then we have
$$
\abs{\tilde J^{t_n,t_{n+1}}(x) - J^{t_n,t_{n+1}}(x) } \le C (\Delta t)^2 \quad \text{ for } \quad x \in \R^d,
$$
with a constant $C$ independent of $n \le N-1$ and $\Delta t$.
\end{prop}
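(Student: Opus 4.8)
The plan is to compare the two matrix-valued quantities $J^{t_n,t_{n+1}}(x)$ and $\tilde J^{t_n,t_{n+1}}(x)$ by writing both as solutions of integral equations on the short interval $[t_n,t_{n+1}]$ of length $\Delta t$, and showing that each differs from the identity by $-\Delta t \,(D^2W*\rho(t_n))(x)$ up to an $O((\Delta t)^2)$ error; the first-order terms then cancel. Let me abbreviate $A(\tau) := -(D^2W*\rho(\tau))(F^{t_n,\tau}(x))$, so that by \eqref{expJex} the exact Jacobian solves $J^{t_n,t}(x) = I_d + \int_{t_n}^t A(\tau) J^{t_n,\tau}(x)\,d\tau$, while by \eqref{expJtilde} the approximate one is $\tilde J^{t_n,t}(x) = \exp\bigl(\int_{t_n}^t A(\tau)\,d\tau\bigr)$. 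Since $u \in L^\infty(0,T;\mw^{1,\infty})$, we have $\|A(\tau)\| \le C$ uniformly (here $C$ involves $L_T$), and from \eqref{majJex} also $\|J^{t_n,\tau}(x)\| \le e^{CL_T\Delta t} \le C$ for $\tau \in [t_n,t_{n+1}]$.

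The key steps are as follows. First, for the exact flow: from the integral equation, $\|J^{t_n,\tau}(x) - I_d\| \le C\Delta t$ for $\tau \in [t_n,t_{n+1}]$ by \eqref{maj1moinsJex}, hence substituting this back gives
\begin{equation*}
J^{t_n,t_{n+1}}(x) = I_d + \int_{t_n}^{t_{n+1}} A(\tau)\,d\tau + \int_{t_n}^{t_{n+1}} A(\tau)\bigl(J^{t_n,\tau}(x) - I_d\bigr)\,d\tau = I_d + \int_{t_n}^{t_{n+1}} A(\tau)\,d\tau + O((\Delta t)^2).
\end{equation*}
Second, for the approximate flow: setting $M := \int_{t_n}^{t_{n+1}} A(\tau)\,d\tau$, we have $\|M\| \le C\Delta t$, so the power series $\exp(M) = I_d + M + \sum_{k\ge 2} M^k/k!$ gives $\tilde J^{t_n,t_{n+1}}(x) = I_d + M + O(\|M\|^2) = I_d + \int_{t_n}^{t_{n+1}} A(\tau)\,d\tau + O((\Delta t)^2)$. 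Subtracting the two expansions, the $I_d$ and the integral terms cancel exactly, leaving $|\tilde J^{t_n,t_{n+1}}(x) - J^{t_n,t_{n+1}}(x)| \le C(\Delta t)^2$, with a constant depending only on $L_T$ (and the dimension), hence independent of $n \le N-1$ and of $\Delta t$.

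I do not expect a serious obstacle here; the estimate is essentially a statement that two consistent one-step approximations of the same linear ODE agree to second order on a step of size $\Delta t$. The only mildly delicate points are purely bookkeeping: being careful that all the implied constants genuinely depend only on $L_T = \sup_{t \in [0,T]}\|u(t,\cdot)\|_{\mw^{1,\infty}}$ and not on $n$ (which is clear since every bound used — \eqref{majJex}, \eqref{maj1moinsJex}, and $\|D^2W*\rho(\tau)\|_{L^\infty} \le \|u(\tau,\cdot)\|_{\mw^{1,\infty}} \le L_T$ — is uniform in the base time $t_n$), and controlling the matrix exponential tail $\|\sum_{k\ge 2} M^k/k!\| \le \|M\|^2 e^{\|M\|} \le C(\Delta t)^2$ for the sub-multiplicative norm in use. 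If one wants to avoid even mentioning the exponential series, an alternative is to note that $\tilde J^{t_n,t}(x)$ solves $\frac{d}{dt}\tilde J = A(t)\tilde J$ with $\tilde J^{t_n,t_n} = I_d$ — beware that this ODE has the $A(t)$ on the \emph{wrong} side to match \eqref{matrixodes} unless the $A(\tau)$ commute, which is precisely the point of the proposition — and then run the same integral-equation expansion for $\tilde J$ as for $J$, observing that the first-order terms coincide because $\tilde J^{t_n,\tau}(x) - I_d = O(\Delta t)$ as well; the two routes give the same conclusion.
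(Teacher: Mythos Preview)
Your proof is correct and follows essentially the same route as the paper's: both expand $J^{t_n,t_{n+1}}$ via the integral equation \eqref{expJex} and $\tilde J^{t_n,t_{n+1}}$ via the exponential series, observe that each equals $I_d + \int_{t_n}^{t_{n+1}} A(\tau)\,d\tau$ plus an $O((\Delta t)^2)$ remainder, and subtract. The paper merely writes the difference directly as the sum of the exponential tail and the term $\int A(\tau)(J^{t_n,\tau}-I_d)\,d\tau$, which is exactly your two remainders combined.
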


\begin{proof}
Given $n \le N-1$ and $x \in \R^d$, we denote for simplicity
$$
B(\tau) = B(\tau, t_n, x) := (D^2 W * \rho(\tau))(F^{t_n,\tau}(x))
$$
and we observe that  $\abs{B(\tau)} \le L = \sup_{t\le T} \abs{u(t)}_{\mw^{1,\infty}}$
for all  $\tau \in [t_n,t_{n+1}]$. From \eqref{expJex} we have $J^{t_n,t_{n+1}}(x) = I_d - \int_{t_n}^{t_{n+1}} B(\tau) d\tau + \int_{t_n}^{t_{n+1}} B(\tau)(I_d - J^{t_n,\tau}(x)) d\tau$, hence the difference $E(x) := \tilde J^{t_n,t_{n+1}}(x) - J^{t_n,t_{n+1}}(x)$ can be decomposed into
$$ 
E(x)
    = \underbrace{\sum_{m=2}^\infty \frac{(-1)^m}{m !} \lt( \int_{t_n}^{t_{n+1}} \mspace{-10mu} B(\tau) d\tau\rt)^m}_{=: (a)}
    + \underbrace{\int_{t_n}^{t_{n+1}}\mspace{-10mu}  B(\tau) (I_{d} - J^{t_n,\tau}(x))d\tau}_{=: (b)}.
$$
From the above bound for $B$ we readily find
$(a) \leq \sum_{m=2}^\infty \frac{1}{m !}(C\Delta t)^m \leq C(\Delta t)^2 $.
Turning to $(b)$, we use again \eqref{expJex} to write
$$
\abs{(b)} = \lt| \int_{t_n}^{t_{n+1}} \mspace{-10mu} B(\tau)
                  \lt( \int_{t_n}^\tau \mspace{-5mu} B(t) J^{t_n,t}(x) dt \rt) d\tau \rt|
    \leq C \int_{t_n}^{t_{n+1}} \int_{t_n}^\tau \abs{J^{t_n,t}(x)} dt d\tau
    \leq C  (\Delta t)^2
$$
where we have used \eqref{majJex} in the last inequality. The result follows.
\end{proof}

At time $t_{n+1}$, $x_k^{n+1}$ is an approximation of  $F^{t_{n},t_{n+1}} (x_k^{n})$ 
which is the solution at time $t_{n+1}$ of the ODE
\begin{equation} \label{eqcharaapproached}
\left\{
\begin{aligned}
&\frac{d\tilde{X}_{k}(t)}{dt}= u(t,\tilde{X}_{k}(t)) = - (\nabla W * \rho(t))(\tilde{X}_{k}(t)),\\
&\tilde{X}_k(t_n)=x_k^n.
\end{aligned}
\right.
\end{equation} 
Then we can define $x_k^{n+1}$ as the approximation given by a numerical scheme discretizing \eqref{eqcharaapproached} when replacing the exact density $\rho$ at discrete times in $[t_n,t_{n+1}]$ by its LTP approximation $\rho_h^n$. 
In the convergence analysis, we consider particle trajectories $x_k^n$ and approached Jacobian matrices $J_k^n$ defined by an explicit Euler scheme:
\beq \label{schemEuler}
\left\{
\begin{aligned}
&x_k^{n+1}  :=x_k^n -\Delta t \,(\nabla W * \rho_h^n)(x_k^n),\\
&J_k^{n}   := e^{- \Delta t \,(D^2W * \rho_h^n)(x_k^n)} =\sum_{m=0}^\infty \frac{(-1)^m}{m !} 
        \left[\Delta t \, ((D^2 W * \rho^n_h)(x_k^n)\right]^m.
\end{aligned}
\right.
\eeq
Note that this expression can be seen as an approximation to \eqref{expJtilde} using 
a rectangular rule in the time integral (here will not take into account the approximation error 
of convolution products). Accordingly, we set
\beq\label{est_jk}
j_k^n = \det (J_k^n) 
        = \exp \lt( -\Delta t (\Delta_x W * \rho^n_h)(x_k^n)\rt).
\eeq
Using \eqref{majD2Wrho} and the $L^1$ bound \eqref{L1rhon} on $\rho^n_h$, we see that this approximation yields
$$ 
| J_k^n - I_d | = \lt| \sum_{m=1}^\infty \frac{(-1)^m}{m !} (\Delta t)^m ((D^2 W * \rho^n_h)(x_k^n))^m\rt| \leq \sum_{m=1}^\infty \frac{1}{m !}(C\Delta t)^m \leq C\Delta t e^{C \Delta t}.
$$
Clearly, higher-order time discretizations are also possible. 

\begin{rem}
When $d>1$, computing the exponential of a $d \times d$ matrix is costly. Another possibility is to 
approximate $J^{t_n,t_{n+1}}(x_k^n)$ by
$$
 \tilde{J_k^n} = I_{d} - \Delta t \,  (D^2W * \rho^n_h)(x_k^n)\,.
$$
It is easily verified that the difference between these approximations satisfies 
$$
\sup_{0\leq n\leq \frac{T}{\Delta t}} \sup_{k\in \Z} 
\left\| \tilde{J_k^n} - J_k^n   \right\|
= {\mathcal O} \left({\Delta t^2} \right)
$$
as long as we have $\nabla W\in \mw^{1,q}(\R^d)$ and $\sup_{0\leq n\leq \frac{T}{\Delta t}} \|\rho_h^n \|_{L^p} \le C$ with $p = q'$. 
\end{rem}

\subsection{General strategy of the convergence proofs}

In order to establish error estimates for the approximation of the density $\rho(t_n)$ by $\rho^n_h$ 
we will use Gronwall arguments that involve errors on the flows and on the Jacobian determinants. 
Since the velocity fields depend nonlinearly on the densities, we need to couple these errors 
with the density approximation error, and since the $k$-th particle is pushed forward by the 
approximated flow $F_{h,k}^n$ during the time interval $[t_n,t_{n+1}]$, we need to control the
local error between this approximation and the exact flow $F^{t_n,t_{n+1}}$. To this end we define
a first error term on the support of the smooth particles,
\begin{equation} \label{eFn}
e_F^n        := \sup_{k\in \Z^d} \| F^{t_{n},t_{n+1}} -F_{h,k}^{n}  \|_{L^{\infty}(S^n_{h,k}) } 
\quad \text{ with } \quad S^n_{h,k} := \supp\big(\varphi_{h,k}^n \big)\,.
\end{equation}
In our analysis, we shall also need to track the error on an extended domain which accounts for the deformation of the particle support by the exact flow, namely
\begin{equation} \label{tilde-eFn}
\tilde e_F^n := \sup_{k\in \Z^d} \| F^{t_{n},t_{n+1}} -F_{h,k}^{n}  \|_{L^{\infty}(\tilde S^n_{h,k}) } 
\,\, \text{ with } \,\,  \tilde S^n_{h,k} := S^n_{h,k} \cup F^{t_{n+1},t_n}(S^{n+1}_{h,k}).
\end{equation}
The error corresponding to the integrated flow \eqref{def_fbar} is then defined as
\begin{equation*}
\overline{e_F}^n:= \sup_{k\in \Z^d} \| F^{0,t_{n}} -\overline{F}_{h,k}^{n}  \|_{L^{\infty}(S^0_{h,k}) }.
\end{equation*}
Using the fact that the exact 
flow is Lipschitz, see \eqref{majFexlip}, it is easy to bound this term by accumulating the local flow errors,
$\overline{e_F}^n \le  C \exp(CT) (e^0_F + \cdots + e^{n-1}_F)$,
but in the analysis we will need a finer control, see Lemma~\ref{estbareFn} below.
We will also need to control the error of the Jacobian determinants for each particle, thus we define   
\begin{equation}\label{def_ejn}
e_j^n:= \sup_{k\in \Z^d} \left\| \frac{1}{ j^{t_{n},t_{n+1}}(x)}-\frac{1}{j_{k}^{n}}  \right\|_{L^{\infty}(S^n_{h,k}) }.
\end{equation}

Finally we will need to track carefully the particles that affect the local value of the approximated density. 
For this purpose, we let 
$$ 
\mathcal{K}_{n}(x):= \{k\in \Z^d: \, x\in S^n_{h,k} \}.
$$

\section{$L^1$ and $L^\infty$ convergence for smooth potentials} \label{sec:conv}
\setcounter{equation}{0}

In this section we assume that the potential is smooth, as defined in the introduction. This means that 
$\nabla W \in \mw^{1,\infty}(\R^d)$.
In this case, the Lipschitz norm of $u$ is bounded by $\|\nabla W\|_{\mw^{1,\infty}}$: indeed letting $\abs{\cdot}$ denote the Euclidean norm in $\R^d$ as well as its associated matrix norm,
we have for all $x \in \R^d$, $t \in [0,T]$,
\begin{align} \label{majD2Wrho}
\begin{aligned}
\abs{Du(t,x)} &= \abs{(D^2 W * \rho(t)) (x)}
    \\
    &\le C \max_{1 \leq i,j \leq d} \abs{(\partial_{ij} W * \rho (t))(x)}
    \leq C \|\rho^0\|_{L^1} \|\nabla W\|_{\mw^{1,\infty}}.
\end{aligned}
\end{align} 
and similarly for $u$, so that estimates~\eqref{majJex}-\eqref{est_reja-1} hold with $L = C \|\nabla W\|_{\mw^{1,\infty}}$. However, to obtain convergence rates in $L^p$-spaces we need more regularity on the solutions. In turn we assume that $\rho^0 \in \mw^{1,1}_+(\R^d)$ in this section
    and we compute the weights with the formula \eqref{defweights} involving the dual kernels.
According to the propagation of regularity of solutions to \eqref{main-eq} in Proposition \ref{rhoW11} in the Appendix, this ensures that the unique solution to \eqref{main-eq} satisfies $\rho \in L^\infty(0,T;\mw^{1,1}(\R^d))$ for all $T>0$. 

Given the solution $\rho$ to \eqref{main-eq}, we will use the shortcut notation, $\rho^n(x):= \rho(t_n,x)$ for $x \in \R^d$.  From now on, $C$ denotes a generic constant independent of $h$ and $\Delta t$, depending only on $L= \sup_{t\le T} \abs{u(t)}_{\mw^{1,\infty}}$, $d$ and the exact solution.

Moreover, we assume that both $h$ and $\Delta t$ are bounded by an absolute constant.
We denote by 
\begin{equation} 
\label{defthetan}
\theta_n := \|\rho^n - \rho^n_h\|_{L^1}, \qquad  \tilde \theta_n := \max_{0 \leq m \leq n} \theta_m, \quad \mbox{and} \quad \varepsilon_n := \|\rho^n - \rho^n_h\|_{L^{\infty}}
 \end{equation}
the errors in $L^1$ and $L^{\infty}$ norms.

\subsection{Estimates on the flows and related terms }
%
We first control the particle overlapping from the approximation error on the flow.
\begin{lem}\label{lem_majKn}
There exists a constant $C$ independent on $h$ and $\Delta t$ such that
\begin{equation} 
 \label{majKn}
\kappa_n :=\sup_{x\in \R^d} \, \# \mathcal{K}_{n}(x) \leq C \left( 1+ \frac{\overline{e_F}^n}{h}\right)^d.
\end{equation}
\end{lem}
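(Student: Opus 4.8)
The plan is geometric: I would show that each transported particle support $S_{h,k}^n$ lies, up to an error of order $\overline{e_F}^n + h$, inside a ball centered at the exact image $F^{0,t_n}(x_k^0)$ of the grid node, and then count how many grid nodes can be pulled back by the (Lipschitz) inverse flow into a fixed ball of that radius.

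First I would localize the deformed supports. By \eqref{phikn-push-phi0k} and the fact that $\overline{F}_{h,k}^n$ is an affine bijection, $S_{h,k}^n = \overline{F}_{h,k}^n\big(\supp \varphi_{h,k}^0\big)$, and since $\supp\varphi \subset B(0,R_o)$ one has $S_{h,k}^0 = \supp\varphi_{h,k}^0 \subset \overline{B}(x_k^0, R_o h)$. Hence for $x \in S_{h,k}^n$, writing $x = \overline{F}_{h,k}^n(y)$ with $y \in S_{h,k}^0$, I would combine the definition of $\overline{e_F}^n$ with the Lipschitz bound \eqref{majFexlip} for $F^{0,t_n}$ to obtain
\[
|x - F^{0,t_n}(x_k^0)| \le |\overline{F}_{h,k}^n(y) - F^{0,t_n}(y)| + |F^{0,t_n}(y) - F^{0,t_n}(x_k^0)| \le \overline{e_F}^n + C h =: r_n ,
\]
using $x_k^0 \in S_{h,k}^0$ and absorbing $R_o e^{CT}$ into $C$.

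Then I would count. Fixing $x \in \R^d$, for every $k \in \mathcal{K}_n(x)$ (i.e.\ $x \in S_{h,k}^n$) the previous bound places $F^{0,t_n}(x_k^0)$ in $\overline{B}(x, r_n)$. Applying the inverse flow $F^{t_n,0}$, which is Lipschitz with constant $\le e^{CT} \le C$ again by \eqref{majFexlip}, all the nodes $x_k^0 = kh$ with $k \in \mathcal{K}_n(x)$ end up in a single Euclidean ball of radius $C r_n$; since the points of $h\Z^d$ are $h$-separated, at most $C(1 + C r_n/h)^d$ of them can lie there, and using that $h$ is bounded this is $\le C(1 + \overline{e_F}^n/h)^d$. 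Taking the supremum over $x$ gives \eqref{majKn}.

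The one point requiring a little care is the localization step: it is essential that $\overline{F}_{h,k}^n$ be close to the exact flow on the whole support $S_{h,k}^0$, not merely at the node $x_k^0$, so that the affinely deformed support neither drifts nor stretches farther than $r_n$ from $F^{0,t_n}(x_k^0)$. Once that is secured, the remainder is a routine lattice-point count in a ball and needs no Gronwall-type argument.
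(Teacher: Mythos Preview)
Your proof is correct and follows essentially the same approach as the paper: both arguments pull the point $x$ back by the exact inverse flow $F^{t_n,0}$ and show that every node $x_k^0 = kh$ with $k \in \mathcal{K}_n(x)$ lies within distance $C(\overline{e_F}^n + h)$ of $F^{t_n,0}(x)$, then finish with a lattice-point count. The only cosmetic difference is that the paper works directly in the backward direction (bounding $|F^{t_n,0}(x) - kh|$ via the preimage $z_k = (\overline{F}_{h,k}^n)^{-1}(x)$), whereas you first bound $|x - F^{0,t_n}(x_k^0)|$ in the forward direction and then apply $F^{t_n,0}$; the ingredients (the definition of $\overline{e_F}^n$ on $S_{h,k}^0$ and the Lipschitz bound \eqref{majFexlip}) are identical.
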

\begin{proof}
Given $x\in \R^d$ and $k\in  \mathcal{K}_{n}(x)$, we denote $z= F^{t_{n},0}(x)$ and $z_k=\big(\overline{F}_{h,k}^{n}\big)^{-1}(x)$.
From \eqref{def_fbar} we see that $z_k \in S^0_{h,k}$. Using the Lipschitz bound \eqref{majFexlip} we then write
$$ \begin{aligned}  |z- kh | 
& \leq  |z- z_k |+ |z_k- kh | \leq \left|F^{t_{n},0}
   \left(\overline{F}_{h,k}^{n}(z_k))- F^{0,t_{n}}(z_k) \right) \right|+  |z_k-x_k^0 | \\
&  \leq \left|F^{t_{n},0} \right|_{Lip}  \overline{e_F}^n + C h 
    \leq C( \overline{e_F}^n +  h).
\end{aligned}
$$
This gives
$   
\left|k -\frac{z}{h} \right| \leq C \big( 1 + \frac{ \overline{e_F}^n}{h}\big),
$
and the result follows.
\end{proof}

Using the formulas \eqref{schemEuler}, \eqref{est_jk} and the a priori $L^1$ bound \eqref{L1rhon} on the approximated
densities $\rho^n_h$ we easily derive uniform estimates for the approximated Jacobian matrices and the particle supports.
%
\begin{lem}\label{lem_jh} 
The approximated Jacobian determinants satisfy
\begin{equation*}  
e^{-C\|\Delta_x W\|_{L^\infty}  \Delta t} \leq j_k^n \leq e^{C\|\Delta_x W\|_{L^\infty}\Delta t}\,
\end{equation*}
for a constant uniform in $k$ and $n \le N$. In particular, $J_k^n$ is always invertible and
\begin{equation} \label{enchkn}
e^{-C\|\Delta_x W\|_{L^\infty}T} \leq \frac{h_k^n}{h^d} \leq e^{C\|\Delta_x W\|_{L^\infty}T}.
\end{equation}
As for the deformation matrices $D^n_k = (J^{n-1}_k \cdots J^0_k)^{-1}$, they satisfy
\begin{equation} \label{est-Dnk}
\max(|D_k^n|, |(D_k^n)^{-1}|) \leq C
\end{equation}
for another constant uniform in $k$ and $n \le N$.
\end{lem}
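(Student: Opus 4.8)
The plan is to read everything off the explicit Euler formulas \eqref{schemEuler}--\eqref{est_jk}, using only the uniform $L^1$ bound \eqref{L1rhon} on $\rho^n_h$ together with the fact that for a smooth potential $D^2 W$, hence $\Delta_x W$, lies in $L^\infty(\R^d)$.

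First I would handle $j_k^n$. From \eqref{est_jk}, $j_k^n = \exp\bigl(-\Delta t\,(\Delta_x W * \rho^n_h)(x_k^n)\bigr)$, and Young's inequality with \eqref{L1rhon} gives $\abs{(\Delta_x W * \rho^n_h)(x_k^n)} \le \|\Delta_x W\|_{L^\infty}\|\rho^n_h\|_{L^1} \le C\|\Delta_x W\|_{L^\infty}$, uniformly in $k$ and $n$. Exponentiating yields the claimed two-sided bound on $j_k^n$; in particular $j_k^n = \det J_k^n > 0$, so $J_k^n$ is invertible. Then I would iterate the recursion $h_k^{n+1} = j_k^n h_k^n$ from \eqref{defhkn} with $h_k^0 = h^d$, writing $h_k^n/h^d = \prod_{m=0}^{n-1} j_k^m$; since each factor lies between $e^{\pm C\|\Delta_x W\|_{L^\infty}\Delta t}$ and $n\Delta t \le N\Delta t = T$, this gives \eqref{enchkn}.

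For the deformation matrices I would argue at the matrix level rather than the determinant level. The estimate \eqref{majD2Wrho}, applied to $\rho^m_h$ instead of $\rho(t)$ and again using \eqref{L1rhon}, gives $\abs{(D^2 W * \rho^m_h)(x^m_k)} \le C$ uniformly in $k$ and $m$. Since $J^m_k = e^{-\Delta t (D^2 W * \rho^m_h)(x^m_k)}$ and $(J^m_k)^{-1} = e^{+\Delta t (D^2 W * \rho^m_h)(x^m_k)}$, the elementary operator-norm inequality $\abs{e^A} \le e^{\abs{A}}$ gives $\max(\abs{J^m_k}, \abs{(J^m_k)^{-1}}) \le e^{C\Delta t}$. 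Finally, unwinding \eqref{defhkn} one has $D^n_k = (J^0_k)^{-1}(J^1_k)^{-1}\cdots (J^{n-1}_k)^{-1}$ and $(D^n_k)^{-1} = J^{n-1}_k \cdots J^1_k J^0_k$, so submultiplicativity of the matrix norm and $n \le N$ yield $\max(\abs{D^n_k}, \abs{(D^n_k)^{-1}}) \le e^{Cn\Delta t} \le e^{CT} = C$, which is \eqref{est-Dnk}.

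I do not expect a genuine obstacle here: the statement is a direct consequence of the exponential structure of the scheme. The only points requiring a little care are that "smooth potential" is precisely what guarantees $D^2 W, \Delta_x W \in L^\infty$; that all per-step constants above are uniform in $k$ and $m$, so that products over $n \le N = T/\Delta t$ steps accumulate only to a harmless factor $e^{CT}$; and that one should invoke the operator-norm bound $\abs{e^A}\le e^{\abs{A}}$ for the matrices directly, rather than controlling $\abs{J^m_k - I_d}$ step by step (which would give the weaker per-step factor $1 + C\Delta t\, e^{C\Delta t}$, still enough but less transparent).
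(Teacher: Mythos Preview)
Your proposal is correct and matches the paper's approach exactly: the paper does not give a detailed proof of this lemma, merely remarking that one uses the formulas \eqref{schemEuler}, \eqref{est_jk} and the a priori $L^1$ bound \eqref{L1rhon} to ``easily derive'' the estimates, which is precisely what you do. Your write-up simply fills in the details the paper leaves implicit.
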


We next show that the support of the particle approximation is of order $h$.
\begin{lem}\label{lem_supp} If $\nabla W \in \mw^{1,\infty}(\R^d)$, then we have
\beq\label{est_supp_Snk}
|x - x_k^n| \leq C h \qquad \text{ for }  x \in S^n_{h,k}
\eeq
and
\beq\label{est_supp_tSnk}
|x - x_k^n| \leq C(h+\Delta t) \qquad \text{ for }  x \in \tilde S^n_{h,k}
\eeq
with constants $C$ independent of $\Delta t$ and $h$.
\end{lem}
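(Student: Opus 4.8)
The plan is to prove both estimates directly from the explicit form of the particle shapes~\eqref{phikn} together with the uniform bounds on the deformation matrices provided by Lemma~\ref{lem_jh}, and then to transfer the first estimate onto the extended support $\tilde S^n_{h,k}$ using the Lipschitz character of the exact flow.

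First I would establish~\eqref{est_supp_Snk}. By~\eqref{phikn}, a point $x$ lies in $S^n_{h,k} = \supp(\varphi^n_{h,k})$ precisely when $D^n_k(x-x^n_k)/h$ lies in $\supp(\varphi) \subset B(0,R_o)$, i.e. when $|D^n_k(x-x^n_k)| \le R_o h$. Writing $x - x^n_k = (D^n_k)^{-1} D^n_k (x-x^n_k)$ and using the uniform bound $|(D^n_k)^{-1}| \le C$ from~\eqref{est-Dnk} in Lemma~\ref{lem_jh}, we immediately get $|x - x^n_k| \le C R_o h \le C h$, which is~\eqref{est_supp_Snk}. This step is essentially bookkeeping: the only input is that $D^n_k$ and its inverse are bounded independently of $k$ and $n$, which was already proved.

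Next, for~\eqref{est_supp_tSnk}, recall that $\tilde S^n_{h,k} = S^n_{h,k} \cup F^{t_{n+1},t_n}(S^{n+1}_{h,k})$. On the first piece we already have the bound $|x - x^n_k| \le Ch$. For the second piece, take $x = F^{t_{n+1},t_n}(y)$ with $y \in S^{n+1}_{h,k}$; by the case already proved (applied at level $n+1$), $|y - x^{n+1}_k| \le Ch$. Now I would estimate
$$
|x - x^n_k| \le |F^{t_{n+1},t_n}(y) - F^{t_{n+1},t_n}(x^{n+1}_k)| + |F^{t_{n+1},t_n}(x^{n+1}_k) - x^n_k|.
$$
The first term is bounded by $|F^{t_{n+1},t_n}|_{Lip}\,|y - x^{n+1}_k| \le C h$ using~\eqref{majFexlip}. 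For the second term, observe that $x^{n+1}_k$ is obtained from $x^n_k$ by one explicit Euler step~\eqref{schemEuler}, so $|x^{n+1}_k - x^n_k| = \Delta t\,|(\nabla W * \rho^n_h)(x^n_k)| \le C\Delta t$ by~\eqref{majD2Wrho}-type bounds together with the $L^1$ bound~\eqref{L1rhon}; hence, pulling this small displacement back by the Lipschitz flow $F^{t_{n+1},t_n}$ (whose displacement over one time step is also $O(\Delta t)$), one gets $|F^{t_{n+1},t_n}(x^{n+1}_k) - x^n_k| \le C\Delta t$. Combining the three bounds yields $|x - x^n_k| \le C(h + \Delta t)$, which is~\eqref{est_supp_tSnk}.

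I expect no serious obstacle here; the statement is a geometric consequence of the uniform deformation bounds and the Lipschitz control on the flow, both already in hand. The only point demanding a little care is the estimate of $|F^{t_{n+1},t_n}(x^{n+1}_k) - x^n_k|$: one must not confuse $x^{n+1}_k$ (the scheme's one-step image of $x^n_k$) with $F^{t_n,t_{n+1}}(x^n_k)$ (the exact image). The cleanest route is to bound $|x^{n+1}_k - x^n_k| \le C\Delta t$ directly from~\eqref{schemEuler} and, separately, $|F^{t_{n+1},t_n}(z) - z| \le C\Delta t$ for any $z$ from~\eqref{expFex} and the $L^\infty$ bound on $\nabla W * \rho$, and then combine with the triangle inequality. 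This avoids needing any flow-error term $e^n_F$ at this stage, keeping the lemma elementary.
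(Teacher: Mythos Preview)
Your proof is correct and follows essentially the same approach as the paper: both derive \eqref{est_supp_Snk} directly from \eqref{phikn} and the bound \eqref{est-Dnk}, and both handle the extended support via the Lipschitz estimate \eqref{majFexlip} together with an $O(\Delta t)$ bound relating $x^{n+1}_k$ to the exact flow of $x^n_k$. The only cosmetic difference is the triangle-inequality split for the second piece: the paper inserts $F^{t_n,t_{n+1}}(x^n_k)$ and bounds $|x^{n+1}_k - F^{t_n,t_{n+1}}(x^n_k)| \le C\Delta t$ directly from \eqref{expFex} and \eqref{schemEuler}, whereas you insert $F^{t_{n+1},t_n}(x^{n+1}_k)$ and bound $|F^{t_{n+1},t_n}(x^{n+1}_k) - x^n_k|$ by two separate $O(\Delta t)$ terms; both routes are equally valid and equally elementary.
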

\begin{proof} 
From $\supp(\varphi) \subset B(0,c)$, we easily infer that
$ \big|D_k^n(x-x_k^n)\big| \leq c h $ holds on $\supp(\varphi^n_{h,k})$, see \eqref{phikn}, 
thus \eqref{est_supp_Snk} holds for $n \le N$, using \eqref{est-Dnk}.
To complete the proof we then observe that \eqref{expFex} gives
$$
\abs{x^{n+1}_k - F^{t_{n},t_{n+1}}(x^n_k)} =
\abs{\int_{t_n}^{t_{n+1}} (\nabla W * \rho_h^n)(x_k^n) - (\nabla W * \rho(\tau))(F^{t_n,\tau}(x^n_k))d\tau}
\le C \Delta t,
$$
so that if $x$ is such that $F^{t_n,t_{n+1}}(x) \in \supp(\varphi^{n+1}_{h,k})$, we have
$$
\bal
\abs{x-x^n_k} 
    &= \abs{F^{t_{n+1},t_n}(F^{t_n,t_{n+1}}(x)) - F^{t_{n+1},t_n}(F^{t_n,t_{n+1}}(x^n_k))} 
    \\
    & \le \abs{F^{t_n,t_{n+1}}}_{Lip} \big( 
            \abs{F^{t_n,t_{n+1}}(x) - x^{n+1}_k} 
            + \abs{x^{n+1}_k - F^{t_{n},t_{n+1}}(x^n_k)}  
            \big)
    \\
    & \le C (h + \Delta t),
\eal
$$
by using the Lipschitz estimate \eqref{majFexlip} and the bound \eqref{est_supp_Snk} on $S^{n+1}_{h,k}$.
\end{proof}
To control the approximation errors for the velocity and the Jacobian matrices, we next introduce the 
generic error
\begin{equation} \label{def-xi}
 \tilde \xi_n(K) := \sup_{\tau \in [t_n,t_{n+1}]}\sup_{k\in \Z^d}\, \sup_{x\in \tilde S^n_{h,k}}  
                \left| (K * \rho(\tau))(F^{t_n,\tau}(x)) - (K * \rho_h^n)\ (x_k^n) \right|,  
\end{equation}
for some given $K\in \mw^{1,\infty}(\R^d)$ and $0\leq n \leq N = T / \Delta t$.

\begin{prop} \label{propK1} The discrete velocity $u^n_k := -(\nabla W * \rho^n_h)(x_k^n)$ satisfies
\begin{equation} \label{estvelobis}
|u(\tau,F^{t_n,\tau}(x_k^n)) - u^n_k| \le C(h^2 \norm{\rho^0}_{\mw^{1,1}} + \Delta t + \overline{e_F}^n)
\end{equation}
for $\tau \in [t_n,t_{n+1}]$, $0 \leq n \leq N-1$ and with a constant $C$ independent of $\Delta t$ and $h$.
\end{prop}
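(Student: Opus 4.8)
The plan is to estimate the difference $|u(\tau,F^{t_n,\tau}(x_k^n)) - u^n_k|$ by inserting intermediate terms that unfold the two convolutions onto a common reference domain. Writing $u(\tau,F^{t_n,\tau}(x_k^n)) = -(\nabla W * \rho(\tau))(F^{t_n,\tau}(x_k^n))$ and using $\rho(\tau) = F^{0,\tau}\#\rho^0$, I would express this integral against $\rho^0$ via the change of variables $y = F^{0,\tau}(z)$, and similarly express $u^n_k = -(\nabla W * \rho_h^n)(x_k^n)$ against the particle shapes using $\varphi_{h,l}^n = \overline F_{h,l}^n \# \varphi_{h,l}^0$, so that both are integrals over $S^0_{h,l}$-type domains weighted by $\rho^0$ or $\omega_l\varphi^0_{h,l}$. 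The difference then splits into three pieces: (a) a term controlling the mismatch between the exact flow $F^{0,\tau}$ (composed with $F^{t_n,\tau}$ evaluated at the argument) and the linearized particle flow $\overline F_{h,l}^n$, which is bounded by $\overline{e_F}^n$ plus the $O(\Delta t)$ drift from the time interval $[t_n,\tau]$ and the $O(h)$ size of the particle support (via Lemma~\ref{lem_supp}); (b) the density approximation error $\|\rho^0 - \rho_h^0\|$, which here must be taken in the dual norm $\mw^{-1,p}$ and handled via Proposition~\ref{prop_ini_app} since we only assume $\rho^0\in\mw^{1,1}$; (c) a curvature/linearization term coming from replacing $\overline F_{h,l}^n$ by its exact counterpart inside the argument of $\nabla W$.

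The key technical twist — and the reason the bound features $h^2\|\rho^0\|_{\mw^{1,1}}$ rather than $h\|\rho^0\|_{L^1}$ — is that the naive splitting using $\|\rho^0 - \rho_h^0\|_{L^1}$ would only give first order in $h$, which is insufficient. To recover second order I would exploit the bi-orthogonality \eqref{duality} exactly as in the proof of Proposition~\ref{prop_ini_app}: the function being integrated against $\rho^0 - \rho_h^0$ in term (b), namely $z \mapsto \nabla W(x_k^n - F^{0,t_n}(z))$, is Lipschitz (since $\nabla W \in \mw^{1,\infty}$ and $F^{0,t_n}$ is Lipschitz by \eqref{majFexlip}), so pairing $\rho^0 - \rho_h^0$ against it and subtracting its own LTP-type reconstruction $\tilde v_h$ gives $\sprod{\rho^0-\rho^0_h}{v} \le \|\rho^0-\rho^0_h\|_{\mw^{-1,1}}\|v\|_{\mw^{1,\infty}} \le Ch^2\|\rho^0\|_{\mw^{1,1}}$. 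This is exactly where the choice of dual-kernel weights \eqref{defweights} made at the start of Section~3 pays off.

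For term (a), I would use that for $x \in S^n_{h,l}$ one has $(\overline F_{h,l}^n)^{-1}(x) \in S^0_{h,l}$ by \eqref{def_fbar}, so $|F^{0,t_n}((\overline F_{h,l}^n)^{-1}(x)) - x| \le \overline{e_F}^n$ directly; combined with $|u|_\infty \le \|\nabla W\|_{L^\infty}$ to absorb the $[t_n,\tau]$ time-flow discrepancies (each contributing $O(\Delta t)$) and Lemma~\ref{lem_supp} for the $O(h)$ support size, this yields the $C(h + \Delta t + \overline{e_F}^n)$ contribution after summing against $\sum_l |\omega_l| = \|\rho_h^n\|_{L^1} \le C$ and against $\int \rho^0 = 1$. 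Term (c) is of the same nature — bounded by $L\,\overline{e_F}^n$ times the total mass — since it again compares $F^{0,t_n}\circ(\overline F_{h,l}^n)^{-1}$ with the identity on $S^n_{h,l}$.

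The main obstacle I expect is bookkeeping the second-order duality argument cleanly through the nonlinear change of variables: one must verify that the integrand $v(z) = \nabla W(x_k^n - F^{0,t_n}(z))$ genuinely lies in $\mw^{1,\infty}$ with a norm controlled uniformly in $k$ and $n$ (this uses $\|\nabla W\|_{\mw^{1,\infty}}$ and the Lipschitz bound on $F^{0,t_n}$ from \eqref{majFexlip}, both uniform on $[0,T]$), and that the reconstruction $\tilde v_h$ used in Proposition~\ref{prop_ini_app} is the one compatible with the weights \eqref{defweights} appearing in $\rho_h^0$. Everything else is a careful but routine triangle-inequality assembly of the three pieces, concluding $|u(\tau,F^{t_n,\tau}(x_k^n)) - u^n_k| \le C(h^2\|\rho^0\|_{\mw^{1,1}} + \Delta t + \overline{e_F}^n)$.
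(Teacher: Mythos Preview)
Your approach matches the paper's: decompose into (a) a time-drift term comparing $\nabla W(F^{t_n,\tau}(x_k^n) - F^{0,\tau}(z))$ with $\nabla W(x_k^n - F^{0,t_n}(z))$ against $\rho^0$; (b) the initial approximation error $\int \nabla W(x_k^n - F^{0,t_n}(z))(\rho^0 - \rho_h^0)(z)\,dz$, handled via the $\mw^{-1,1}$ dual estimate of Proposition~\ref{prop_ini_app}; and (c) a flow-error term comparing $F^{0,t_n}\circ(\overline F_{h,l}^n)^{-1}$ with the identity on $S^n_{h,l}$, bounded by $\overline{e_F}^n$ after summing against $\sum_l|\omega_l|$. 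The key insight --- recovering $h^2$ rather than $h$ from (b) by exploiting the bi-orthogonal weights and the $\mw^{1,\infty}$ regularity of $z\mapsto\nabla W(x_k^n - F^{0,t_n}(z))$ --- is correctly identified, and your check that this test function has uniformly bounded $\mw^{1,\infty}$ norm via \eqref{majFexlip} is exactly what the paper does.

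One correction: there is \emph{no} $O(h)$ contribution. You invoke Lemma~\ref{lem_supp} for an ``$O(h)$ size of the particle support'' in term (a), but the proposition evaluates the exact velocity at $F^{t_n,\tau}(x_k^n)$, i.e., along the characteristic issued from the \emph{particle center} $x_k^n$ itself, not from a generic $x\in S^n_{h,k}$. Hence the argument mismatch $(F^{t_n,\tau}(x_k^n)-F^{0,\tau}(z))-(x_k^n-F^{0,t_n}(z))$ is purely a time-drift, bounded by $2\Delta t\,\|\nabla W\|_{L^\infty}$ via \eqref{expFex}, with no $|x-x_k^n|$ term. As written, your bound for (a) carries an extra $Ch$ that is not absorbed by the target estimate $C(h^2\|\rho^0\|_{\mw^{1,1}}+\Delta t+\overline{e_F}^n)$; simply dropping the support-size term (which is where you seem to be confusing this estimate with the $\tilde\xi_n$-type bound of Proposition~\ref{propK2}) fixes it.
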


\begin{proof}
Using that $u(\tau,y) = -(\nabla W * \rho(\tau))(y) = -(\nabla W * (F^{0,\tau} \# \rho^0))(y)$, we write
$$ 
\begin{aligned}
u(\tau,F^{t_n,\tau}(x^n_k)) 
    &=-\int_{\R^d} \nabla W (F^{t_n,\tau}(x^n_k)-y)   \rho(\tau,y)dy
    \\
    &=-\int_{\R^d} \nabla W(F^{t_n,\tau}(x^n_k)- F^{0,\tau}(z) ) \rho^0(z) dz
    \\
    & = (a) +(b)+(c) - \int_{\R^d} \nabla W(x_k^n-y)   \rho_h^n(y)dy
\end{aligned}
$$
with
$$ \begin{aligned}
&(a) := -\int_{\R^d} \left[ \nabla W(F^{t_n,\tau}(x^n_k)- F^{0,\tau}(z) ) - \nabla W(x_k^n - F^{0,t_n}(z))\right]\rho^0(z)  dz \\
&(b) := -\int_{\R^d} \nabla W(x_k^n - F^{0,t_n}(z)  ) \left[ \rho^0(z) - \rho_h^0(z)\right]  dz \\
& (c) := -\sum_{l\in \Z^d} \omega_l \int_{S^n_{h,l}} 
  \left[\nabla W(x_k^n - F^{0,t_n}((\overline{F}_{h,l}^{n-1})^{-1}(y) )) - \nabla W(x_k^n-y)\right] 
        \varphi_{h,l}^n(y)dy,
\end{aligned}
$$
so that
$ \left| u(\tau,F^{t_n,\tau}(x_k^n)) - u^n_k \right|      \leq |(a)|+|(b)|+|(c)|$. 
For the first term we write 
$$
|(a)|  
     \leq  \|\nabla W\|_{\mw^{1,\infty}} \int_{\R^d} \abs{A(z)} \rho^0(z) dz 
     \leq C \norm{A}_{L^\infty}
$$
with $A(z) := (F^{t_n,\tau}(x_k^n)-F^{0,\tau}(z)) - (x_k^n - F^{0,t_n}(z))$.
Using the expression \eqref{expFex} for the exact flow, estimate \eqref{est_supp_tSnk} and the equality $\norm{\rho(s)}_{L^1} = 1$ 
gives then
$$
\abs{A(z)} \le
    \int_{t_n}^\tau \Abs{ (\nabla W*\rho(s))(F^{t_n,s}(x^n_k)) + 
                                      (\nabla W*\rho(s))(F^{0,s}(z)) } ds
    \le 2 \Delta t \norm{\nabla W}_{L^\infty}
$$
so that $\abs{(a)} \le C \Delta t $.
For $(b)$, using the Lipschitz regularity of the flow \eqref{majFexlip} and the error bound \eqref{rho0-error-neg} on the initial data we find
$$
\begin{aligned}
    |(b)| &\leq e^{C T } \norm{\nabla W}_{\mw^{1,\infty}} \norm{\rho_h^0 - \rho^0}_{\mw^{-1,1}} 
    \leq C h^2 \norm{\rho^0}_{\mw^{1,1}}.
\end{aligned} 
$$
Finally, we observe that for $y\in S^n_{h,l}$ we have $(\overline{F}_{h,l}^{n})^{-1}(y)\in {S^0_{h,l}}$
from \eqref{def_fbar}, and
$$
\left|  F^{0,t_n}\left((\overline{F}_{h,l}^n)^{-1}(y)\right) -y \right| 
\leq \left|  F^{0,t_n}\left((\overline{F}_{h,l}^{n})^{-1}(y)\right) -  
     \overline{F}_{h,l}^{n} \left((\overline{F}_{h,l}^{n})^{-1}(y) \right)
 \right| 
\leq \overline{e_F}^n,
$$
and arguing as in \eqref{L1rhon} this gives
$$\begin{aligned} 
    |(c)|
    & \leq  
    \|\nabla W \|_{\mw^{1,\infty}} \sum_{l\in \Z^d} \abs{\omega_l} \int_{S^n_{h,l}} 
                \left|  F^{0,t_n}\left((\overline{F}_{h,l}^{n})^{-1}(y)\right) -y \right| \varphi_{h,l}^n(y)dy   
    \\
     &\leq C \overline{e_F}^n \sum_{l\in \Z^d} \abs{\omega_l} 
     \leq C \overline{e_F}^n.
%
\end{aligned}
$$
By gathering the above estimates, we complete the proof.
\end{proof}

\begin{prop} \label{propK2} If the initial density satisfies $\rho^0 \in \mw^{1,1}_+(\R^d)$, then the estimate
$$
 \tilde \xi_n(D^2 W) \leq  C\left(\theta_n + \Delta t+h \right)
$$ 
holds with a constant $C$ depending only on $d$, $T$, $L$, and $\|\rho^0\|_{\mw^{1,1}}$. Moreover, at $x=x_k^n$, we have
$$
\sup_{k\in \Z^d}\, \sup_{\tau \in [t_n,t_{n+1}]} \left| (D^2 W * \rho(\tau))(F^{t_n,\tau}(x_k^n))- (D^2 W * \rho_h^n)\ (x_k^n) \right|  
\leq C\left(\theta_n + \Delta t \right).
$$
\end{prop}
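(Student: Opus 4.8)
The plan is to estimate, for a fixed $\tau\in[t_n,t_{n+1}]$, $k\in\Z^d$ and $x\in\tilde S^n_{h,k}$, the quantity appearing in~\eqref{def-xi} via the triangle inequality, splitting it (with the shorthand $\rho^n:=\rho(t_n,\cdot)$) as
\[
(D^2W*\rho(\tau))(F^{t_n,\tau}(x))-(D^2W*\rho^n_h)(x_k^n)=\mathrm{(I)}+\mathrm{(II)}+\mathrm{(III)},
\]
where $\mathrm{(I)}:=(D^2W*\rho(\tau))(F^{t_n,\tau}(x))-(D^2W*\rho(\tau))(x_k^n)$ accounts for moving the evaluation point back to the particle centre, $\mathrm{(II)}:=(D^2W*(\rho(\tau)-\rho^n))(x_k^n)$ for freezing the density at time $t_n$, and $\mathrm{(III)}:=(D^2W*(\rho^n-\rho^n_h))(x_k^n)$ is the density discretization error. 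Terms $\mathrm{(II)}$ and $\mathrm{(III)}$ will be handled by crude $L^\infty$/$L^1$ bounds, while $\mathrm{(I)}$ requires an integration by parts.

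For $\mathrm{(III)}$ I would simply write $|\mathrm{(III)}|\le\|D^2W\|_{L^\infty}\|\rho^n-\rho^n_h\|_{L^1}=\|D^2W\|_{L^\infty}\,\theta_n$. For $\mathrm{(II)}$, $|\mathrm{(II)}|\le\|D^2W\|_{L^\infty}\|\rho(\tau)-\rho^n\|_{L^1}$, and I would control the last factor using the time-regularity of the exact solution: by Proposition~\ref{rhoW11} one has $\rho\in L^\infty(0,T;\mw^{1,1}(\R^d))$, and since $\nabla\cdot u(s)=-(\Delta W*\rho(s))$ obeys $\|\nabla\cdot u(s)\|_{L^\infty}\le\|\Delta W\|_{L^\infty}\|\rho(s)\|_{L^1}\le C$, the continuity equation gives
\[
\|\partial_s\rho(s)\|_{L^1}=\|\nabla\cdot(\rho(s)u(s))\|_{L^1}\le\|u\|_{L^\infty}\|\nabla\rho(s)\|_{L^1}+\|\nabla\cdot u(s)\|_{L^\infty}\|\rho(s)\|_{L^1}\le C,
\]
so that $\|\rho(\tau)-\rho^n\|_{L^1}\le\int_{t_n}^\tau\|\partial_s\rho(s)\|_{L^1}\,ds\le C\Delta t$ and $|\mathrm{(II)}|\le C\Delta t$.

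The delicate term is $\mathrm{(I)}$, since $D^2W$ is merely bounded and carries no quantitative modulus of continuity. The idea is to transfer one derivative onto $\rho$: because $\rho(\tau)\in\mw^{1,1}(\R^d)$, an integration by parts yields $\partial_{ij}W*\rho(\tau)=\partial_iW*\partial_j\rho(\tau)$ for all $i,j$, and $\partial_iW$ is globally Lipschitz with constant $\le\|\nabla W\|_{\mw^{1,\infty}}$. Hence every component of $D^2W*\rho(\tau)$ is Lipschitz with constant $\le\|\nabla W\|_{\mw^{1,\infty}}\|\rho(\tau)\|_{\mw^{1,1}}\le C$, and therefore $|\mathrm{(I)}|\le C\,|F^{t_n,\tau}(x)-x_k^n|$. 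It then remains to bound $|F^{t_n,\tau}(x)-x_k^n|\le|F^{t_n,\tau}(x)-x|+|x-x_k^n|$, where $|F^{t_n,\tau}(x)-x|\le\Delta t\,\|\nabla W\|_{L^\infty}$ by the integral identity~\eqref{expFex}, and $|x-x_k^n|\le C(h+\Delta t)$ by~\eqref{est_supp_tSnk} since $x\in\tilde S^n_{h,k}$; so $|\mathrm{(I)}|\le C(h+\Delta t)$.

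Summing the three estimates gives $\tilde\xi_n(D^2W)\le C(\theta_n+\Delta t+h)$, with a constant depending only on $d$, $T$, $L$ and $\|\rho^0\|_{\mw^{1,1}}$, the latter entering through the uniform $\mw^{1,1}$ bound on $\rho(\tau)$ provided by Proposition~\ref{rhoW11}. For the final pointwise assertion I would rerun the same argument with $x=x_k^n$: then $\mathrm{(I)}$ only involves the displacement $|F^{t_n,\tau}(x_k^n)-x_k^n|\le\Delta t\,\|\nabla W\|_{L^\infty}$, so the $h$-contribution disappears and one is left with $C(\theta_n+\Delta t)$. The one real obstacle is precisely the low regularity of the kernel $D^2W$; the integration by parts in $\mathrm{(I)}$ — which is what forces the standing hypothesis $\rho^0\in\mw^{1,1}_+(\R^d)$ — is what removes it.
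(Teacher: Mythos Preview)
Your proof is correct and takes a genuinely different route from the paper's. The paper decomposes the difference as $(a)+(b)$, where $(b)$ is your term $\mathrm{(III)}$, and then rewrites $(a)$ via the push-forward relation $\rho(\tau)=F^{t_n,\tau}\#\rho(t_n)$ as $(c)+(d)$: the term $(c)$ carries the Jacobian error $j^{\tau,t_n}-1$, while $(d)$ compares $\rho(t_n)$ at the two points $F^{\tau,t_n}(F^{t_n,\tau}(x)-y)$ and $x_k^n-y$. Bounding $(d)$ then requires a fairly technical argument: the paper introduces the interpolated map $\Xi_\alpha(y)=\alpha F^{\tau,t_n}(F^{t_n,\tau}(x)-y)+(1-\alpha)(x_k^n-y)$, shows it is injective with Jacobian determinant close to $-1$, and uses a change of variables to turn $\int|\nabla\rho(\Xi_\alpha(y))|\,dy$ into $\|\nabla\rho\|_{L^1}$. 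By contrast, your splitting $\mathrm{(I)}+\mathrm{(II)}+\mathrm{(III)}$ separates the spatial shift of the evaluation point from the temporal shift of the density, and your key observation that $\partial_{ij}W*\rho(\tau)=\partial_iW*\partial_j\rho(\tau)$ makes $D^2W*\rho(\tau)$ globally Lipschitz immediately, bypassing the change-of-variables machinery entirely. Your treatment of $\mathrm{(II)}$ via $\|\partial_s\rho(s)\|_{L^1}\le C$ is also more direct than the paper's push-forward computation. Both proofs ultimately rest on the same $\mw^{1,1}$ regularity of $\rho$, but yours exploits it more efficiently in the smooth-potential setting; the paper's approach, on the other hand, is closer in spirit to what is needed in Section~5 for singular potentials, where $D^2W\notin L^\infty$ and your integration-by-parts shortcut would not yield a Lipschitz bound so readily.
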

\begin{proof}
Given $x\in \tilde S^n_{h,k}$ and $\tau \in [t_n,t_{n+1}]$, we write
\begin{align*}
\begin{aligned}
&\left| (D^2 W * \rho(\tau))(F^{t_n,\tau}(x))- (D^2 W * \rho_h^n)\ (x_k^n) \right|\cr
&\qquad =\int_{\R^d} D^2 W(y)\left[\rho(\tau,F^{t_n,\tau}(x)-y) -    \rho_h^n(x_k^n-y) \right]dy = (a)+(b),
\end{aligned}
\end{align*}

with 
$$
(a) := \int_{\R^d} D^2 W(y)\left[\rho(\tau,F^{t_n,\tau}(x)-y) -    \rho(t_n,x_k^n-y) \right]dy,
$$
$$
(b) :=\int_{\R^d} D^2 W(y)\left[\rho(t_n,x_k^n -y) -    \rho_h^n(x_k^n-y) \right]dy.
$$
The second term is estimated by
$$ 
|(b)| \leq \|D^2 W\|_{L^{\infty}} \|\rho(t_n,\cdot)-\rho_h^n\|_{L^1} \leq L \theta_n.
$$
And using $\rho(\tau) = F^{t_n,\tau} \# \rho(t_n)$ we rewrite the first term as $(a) = (c) + (d)$ with 
$$
\bal
(c) &:= \int_{\R^d}  D^2 W(y)\rho(t_n,F^{\tau,t_n}(F^{t_n,\tau}(x)-y))\left[j^{\tau,t_n}(F^{t_n,\tau}(x)-y) -1 \right]dy\cr
(d) &:= \int_{\R^d}  D^2 W(y)[\rho(t_n,F^{\tau,t_n}(F^{t_n,\tau}(x)-y))- \rho(t_n,x_k^n-y)]\,dy.
\eal
$$
For $(c)$ we use the one-to-one mapping
$\Phi : y \mapsto F^{\tau,t_n}(F^{t_n,\tau}(x)-y)$ with Jacobian determinant 
$\abs{\det \Phi(y)} = j^{\tau,t_n}(F^{t_n,\tau}(x)-y)$. The change of variable formula yields
$$
\int_{\R^d} \rho(t_n,F^{\tau,t_n}(F^{t_n,\tau}(x)-y)) dy 
    \le C \int_{\R^d} \rho(t_n,\Phi(y)) \abs{\det \Phi(y)} dy
    = C \norm{\rho(t_n)}_{L^1} 
    \le C
$$
where we have used \eqref{est_reja} in the first inequality. Using \eqref{est_reja-1} this allows to bound 
$$
\abs{(c)} \leq C\Delta t \|D^2 W\|_{L^\infty} \int_{\R^d} \rho(t_n,F^{\tau,t_n}(F^{t_n,\tau}(x)-y)) dy \leq C \Delta t.
$$ 
Turning next to the $(d)$ term, we introduce
$$
\Xi_\alpha : y \mapsto \alpha (F^{\tau,t_n}(F^{t_n,\tau}(x)-y)) + (1-\alpha)(x_k^n-y) \quad \text{for} \quad \alpha \in [0,1],
$$
so that 
$$
\bal
\abs{(d)} 
    &\le \|D^2 W\|_{L^\infty}\int_{\R^d} \abs{\rho(t_n,\Xi_1(y)) - \rho(t_n,\Xi_0(y))} dy
    \\
    &\le C \int_{\R^d} \int_0^1 \abs{\nabla \rho(\Xi_\alpha(y))} \abs{F^{\tau,t_n} (F^{t_n,\tau}(x)-y) -(x_k^n-y)} \, d\alpha dy
    \\
    &\le C (h + \Delta t) \int_{\R^d} \int_0^1 \abs{\nabla \rho(\Xi_\alpha(y))} \, d\alpha dy
\eal
$$
where in the last inequality we have used (see \eqref{expFex} and Lemma~\ref{lem_supp})
$$
\begin{aligned}
&|F^{\tau,t_n} (F^{t_n,\tau}(x)-y) - (x_k^n-y)| \cr
&\quad = \left|F^{t_n,\tau}(x)-x^n_k -\int_{\tau}^{t_n} (\nabla W * \rho(s))(F^{\tau,s}(F^{t_n,\tau}(x)-y))ds \right| \cr
&\quad \leq (\abs{x-x^n_k} + 2\Delta t \|\nabla W\|_{L^\infty}) \leq C (h+ \Delta t).
\end{aligned}
$$
To end the proof we will show that up to a sign and a translation, $\Xi_\alpha$ is uniformly 
close to the identity mapping. Let $G(y) := (F^{\tau,t_n} - I)(F^{t_n,\tau}(x)-y)$ so that 
$\Xi_\alpha(y) = -y + \alpha G(y) + (1-\alpha) x^n_k + \alpha F^{t_n,\tau}(x)$. 
From \eqref{maj1moinsJex} we infer
$$
\abs{DG(y)} = \abs{I_d - J^{\tau,t_n}(F^{t_n,\tau}(x)-y)} \le C \Delta t
$$
hence there exists a constant $\gamma$ independent of $h$, $\Delta t$ and $n$, such that
$$
\abs{G(y)-G(y')} \le \gamma \Delta t \abs{y-y'}.
$$
This shows that $\Xi_\alpha$ is injective for $\Delta t$ small enough, indeed if $\Xi_\alpha(y) = \Xi_\alpha(y')$ for $y \neq y'$ then $y-y' = \alpha(G(y)-G(y'))$ leads to a contradiction for $\gamma \Delta t < 1$. Moreover, using $D \Xi_\alpha(y) = -I_d + \alpha DG(y)$ and the Jacobi formula for $\partial_\alpha \det(D \Xi_\alpha)$ we find
$$
\abs{\det(D \Xi_\alpha)(y) + 1} \le C \Delta t\,,
$$
which shows that for $\Delta t$ small enough, $\abs{\det(D \Xi_\alpha)}$ is bounded from below by a positive constant $\tilde \gamma$. Using again the change of variable theorem this gives
$$
\tilde \gamma \int_{\R^d} \abs{\nabla \rho(\Xi_\alpha(y))} dy 
\le  \int_{\R^d} \abs{\nabla \rho(\Xi_\alpha(y))} \abs{\det(D \Xi_\alpha)(y)} dy
\le \int_{\R^d} \abs{\nabla \rho(z)} dz \le \norm{\rho}_{\mw^{1,1}}\,.
$$
The desired bound $\abs{d} \le C(h+\Delta t)$ follows by gathering the above steps.
\end{proof}

We can now compute an estimate for the error of the Jacobian determinants.
\begin{coro}\label{Cor_ej}
Assume that $\rho^0 \in \mw^{1,1}_+(\R^d)$, then the following estimate holds
\begin{equation}  \label{majeJn1bis}
e_j^n  \leq C  \Delta t\left(  \theta_n+\Delta t+ h \right) \quad \mbox{for all} \quad 0 \leq n \leq N,
\end{equation}
where $C$ is a positive constant depending only on  $T$, $L$, and $\|\rho\|_{L^\infty(0,T:\mw^{1,1})}$.
\end{coro}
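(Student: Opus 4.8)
The plan is to exploit the exponential representations of the Jacobian determinants and to reduce the bound on $e_j^n$ to the control on $\tilde \xi_n(D^2W)$ already established in Proposition~\ref{propK2}. Recall from \eqref{exp_ja} and \eqref{est_jk} that for $x \in S^n_{h,k}$ one has
$$
\frac{1}{j^{t_n,t_{n+1}}(x)} = e^{a(x)}, \qquad \frac{1}{j_k^n} = e^{b_k^n},
$$
with
$$
a(x) := \int_{t_n}^{t_{n+1}} (\Delta_x W * \rho(\tau))(F^{t_n,\tau}(x))\, d\tau, \qquad b_k^n := \Delta t\, (\Delta_x W * \rho_h^n)(x_k^n).
$$

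First I would observe that both exponents are $O(\Delta t)$: since $\norm{\rho(\tau)}_{L^1} = 1$ and $\norm{\rho_h^n}_{L^1} \le C$ by \eqref{L1rhon}, Young's inequality gives $|(\Delta_x W * \rho(\tau))(\cdot)| \le \norm{\Delta_x W}_{L^\infty}$ and $|(\Delta_x W * \rho_h^n)(\cdot)| \le C\norm{\Delta_x W}_{L^\infty}$, whence $|a(x)|, |b_k^n| \le C\Delta t$. Since $\Delta t$ is assumed bounded, the elementary bound $|e^a - e^b| \le e^{\max(|a|,|b|)}|a-b|$ then yields
$$
\left| \frac{1}{j^{t_n,t_{n+1}}(x)} - \frac{1}{j_k^n} \right| \le C\, |a(x) - b_k^n|
$$
uniformly in $k$ and in $x \in S^n_{h,k}$.

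Next I would write $a(x) - b_k^n$ as a single time integral,
$$
a(x) - b_k^n = \int_{t_n}^{t_{n+1}} \left[ (\Delta_x W * \rho(\tau))(F^{t_n,\tau}(x)) - (\Delta_x W * \rho_h^n)(x_k^n) \right] d\tau,
$$
and bound the integrand. The key point is that $\Delta_x W = \mathrm{tr}(D^2 W)$, so the integrand equals $\mathrm{tr}\big( (D^2 W * \rho(\tau))(F^{t_n,\tau}(x)) - (D^2 W * \rho_h^n)(x_k^n)\big)$, whose absolute value is at most $d$ times the matrix difference appearing in the definition \eqref{def-xi} of $\tilde \xi_n(D^2 W)$. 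Since $S^n_{h,k} \subset \tilde S^n_{h,k}$, Proposition~\ref{propK2} applies and gives, for every $x \in S^n_{h,k}$ and $\tau \in [t_n,t_{n+1}]$,
$$
\left| (\Delta_x W * \rho(\tau))(F^{t_n,\tau}(x)) - (\Delta_x W * \rho_h^n)(x_k^n) \right| \le d\, \tilde \xi_n(D^2 W) \le C(\theta_n + \Delta t + h).
$$
Integrating over $[t_n,t_{n+1}]$ produces the extra factor $\Delta t$ and, combined with the previous display, gives $e_j^n \le C\Delta t(\theta_n + \Delta t + h)$, which is the claim.

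Since every ingredient is already available, I do not expect a genuine obstacle; the only points requiring a little care are the uniform $O(\Delta t)$ control of the two exponents (resting on the a priori $L^1$ bound \eqref{L1rhon} on $\rho_h^n$) and the observation that the scalar Laplacian convolution is the trace of the Hessian convolution, so that Proposition~\ref{propK2} can indeed be invoked. The dependence of $C$ on $T$, $L$, and $\norm{\rho}_{L^\infty(0,T;\mw^{1,1})}$ is inherited directly from that proposition.
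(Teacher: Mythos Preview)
Your argument is correct and follows essentially the same route as the paper: both exploit the exponential form of $1/j^{t_n,t_{n+1}}(x)$ and $1/j_k^n$, bound the difference of exponentials by the difference of exponents (the paper via the integral representation $\exp(\beta)-\exp(\beta')=(\beta-\beta')\int_0^1 e^{r\beta+(1-r)\beta'}\,dr$, you via the equivalent inequality $|e^a-e^b|\le e^{\max(|a|,|b|)}|a-b|$), and then reduce to $\tilde\xi_n(D^2W)$ before invoking Proposition~\ref{propK2}. Your explicit remark that $\Delta_xW=\mathrm{tr}(D^2W)$ and that $S^n_{h,k}\subset\tilde S^n_{h,k}$ is the only cosmetic addition.
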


\begin{proof}
According to \eqref{exp_ja} and \eqref{est_jk}, we have
$$ 
\bal
    \frac{1}{j_k^n} - \frac{1}{j^{t_n,t_{n+1}}(x)}
    & = \exp(\beta^n_k) - \exp(\beta^n(x))
    \\
    & = (\beta^n_k-\beta^n(x)) \int_0^1 \exp\big(r\beta^n_k + (1-r) \beta^n(x)\big) dr
\eal
$$    
with 
$\beta^n_k := \Delta t (\Delta_x W * \rho_h^n)(x_k^n)$ 
and 
$\beta^n(x) := \int_{t_n}^{t_{n+1}} (\Delta_x W * \rho(\tau))(F^{t_n,\tau}(x))  d\tau $.
Since $e_j^n$ involves the above difference for $x\in S^n_{h,k} \subset \tilde S^n_{h,k}$, 
see \eqref{def_ejn}, we infer from \eqref{def-xi} that 
$\abs{\beta^n_k - \beta^n(x)} \le C \Delta t \, \tilde \xi_n(D^2 W)$. 
Using the $L^1$ bound \eqref{L1rhon} on $\rho^n_h$ this yields
$$
e_j^n \leq  C \Delta t \, \tilde \xi_n(D^2 W) \exp\left( C \Delta t \|\Delta_x W\|_{L^\infty} \right)\,,
$$
so that Proposition \ref{propK2} gives the desired result.
\end{proof}

From Proposition \ref{propK2} we also derive an estimate for the error between Jacobian matrices.
\begin{coro}\label{Cor_ej2} 
If $\rho^0 \in \mw^{1,1}_+(\R^d)$, then for $0 \leq n \leq N$ the following estimate holds 
\[ 
|J_k^n - J^{t_n,t_{n+1}}(x)| \leq C\Delta t\lt(\theta_n + h + \Delta t\rt) \qquad \mbox{for } x \in S^n_{h,k},
\] 
with a constant $C$ independent of $\Delta t$ and $h$. 
At $x = x_k^n$, we have 
\beq \label{errbound-J}
|J_k^n - J^{t_n,t_{n+1}}(x_k^n)| \leq C\Delta t\lt(\theta_n  + \Delta t\rt). 
\eeq
\end{coro}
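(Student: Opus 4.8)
The plan is to compare $J_k^n$ to the intermediate object $\tilde J^{t_n,t_{n+1}}(x)$ defined in \eqref{expJtilde}, and then invoke Proposition~\ref{propJexJtilde} to pass from $\tilde J^{t_n,t_{n+1}}(x)$ to the true Jacobian $J^{t_n,t_{n+1}}(x)$ at the cost of an $O(\Delta t^2)$ term. Recall from \eqref{schemEuler} that $J_k^n = \exp(-\Delta t\, (D^2W * \rho_h^n)(x_k^n))$, while $\tilde J^{t_n,t_{n+1}}(x) = \exp\big(-\int_{t_n}^{t_{n+1}} (D^2W * \rho(\tau))(F^{t_n,\tau}(x))\,d\tau\big)$. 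Writing $A := \Delta t\,(D^2W * \rho_h^n)(x_k^n)$ and $B := \int_{t_n}^{t_{n+1}} (D^2W * \rho(\tau))(F^{t_n,\tau}(x))\,d\tau$, both $A$ and $B$ are bounded in norm by $C\Delta t$ thanks to \eqref{majD2Wrho} and the $L^1$ bound \eqref{L1rhon}, and one has the elementary matrix estimate $|e^{-A} - e^{-B}| \le C|A-B|$ (expand both exponentials, or use the integral representation as in the proof of Corollary~\ref{Cor_ej}).

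The crux is therefore to estimate $|A - B|$. By definition of the generic error $\tilde\xi_n(D^2W)$ in \eqref{def-xi}, applied with $K = D^2W$ and with $\tau$ ranging over $[t_n,t_{n+1}]$, we have for $x\in S^n_{h,k}\subset \tilde S^n_{h,k}$,
$$
\Big| (D^2W*\rho(\tau))(F^{t_n,\tau}(x)) - (D^2W*\rho_h^n)(x_k^n)\Big| \le \tilde\xi_n(D^2W)
$$
for every such $\tau$, hence integrating over $[t_n,t_{n+1}]$ gives $|A - B| \le \Delta t\, \tilde\xi_n(D^2W)$. Plugging in the bound $\tilde\xi_n(D^2W) \le C(\theta_n + \Delta t + h)$ from Proposition~\ref{propK2} yields $|A-B| \le C\Delta t(\theta_n + \Delta t + h)$, and combining with the $O(\Delta t^2)$ estimate of Proposition~\ref{propJexJtilde} produces
$$
|J_k^n - J^{t_n,t_{n+1}}(x)| \le |e^{-A}-e^{-B}| + |\tilde J^{t_n,t_{n+1}}(x) - J^{t_n,t_{n+1}}(x)| \le C\Delta t(\theta_n + h + \Delta t),
$$
which is the first claimed bound.

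For the sharper bound \eqref{errbound-J} at $x = x_k^n$, the only change is that I use the second assertion of Proposition~\ref{propK2}, namely
$$
\sup_{k}\sup_{\tau\in[t_n,t_{n+1}]} \Big|(D^2W*\rho(\tau))(F^{t_n,\tau}(x_k^n)) - (D^2W*\rho_h^n)(x_k^n)\Big| \le C(\theta_n + \Delta t),
$$
in which the $h$ term is absent because one no longer pays for the displacement across the particle support (the spatial point is exactly the particle center). Integrating over $[t_n,t_{n+1}]$ gives $|A-B|\le C\Delta t(\theta_n+\Delta t)$ at $x = x_k^n$, and the same two-step comparison via $\tilde J^{t_n,t_{n+1}}(x_k^n)$ and Proposition~\ref{propJexJtilde} gives $|J_k^n - J^{t_n,t_{n+1}}(x_k^n)| \le C\Delta t(\theta_n + \Delta t)$. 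The main (and essentially only) obstacle here is the matrix exponential Lipschitz estimate $|e^{-A}-e^{-B}|\le C|A-B|$ when $|A|,|B|\le C\Delta t$ — this is routine but needs the a priori boundedness of $A$ and $B$, which is exactly what \eqref{majD2Wrho} and \eqref{L1rhon} supply; everything else is a direct appeal to results already established.
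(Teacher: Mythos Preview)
Your proof is correct and follows essentially the same route as the paper: split via the intermediate matrix $\tilde J^{t_n,t_{n+1}}(x)$, use Proposition~\ref{propJexJtilde} for the $O(\Delta t^2)$ piece, and bound $|e^{-A}-e^{-B}|$ by $C|A-B|\le C\Delta t\,\tilde\xi_n(D^2W)$ before invoking Proposition~\ref{propK2} (and its sharper center-point version for \eqref{errbound-J}). The only cosmetic difference is notation; the structure of the argument is identical.
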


\begin{proof} 
Using the matrix $\tilde J^{t_n,t_{n+1}}(x)$ defined by 
\eqref{expJtilde}, Proposition \ref{propJexJtilde} gives
$$
\abs{J_k^n - J^{t_n,t_{n+1}}(x)} \le \abs{J_k^n - \tilde J^{t_n,t_{n+1}}(x)} + C (\Delta t)^2
$$
and to bound the remaining error we proceed as in the proof of Corollary~\ref{Cor_ej}:
denoting
$B^n_k := -\Delta t \,(D^2W * \rho_h^n)(x_k^n)$ 
and 
$B^n(x) := -\int_{t_n}^{t_{n+1}} (D^2 W * \rho(\tau))(F^{t_n,\tau}(x)) d\tau $,
we use the exponential matrix expressions \eqref{schemEuler} and \eqref{expJtilde} to compute
$$ 
\bal
    J_k^n - J^{t_n,t_{n+1}}(x)
    & = \exp(B^n_k) - \exp(B^n(x))
    \\
    & = (B^n_k-B^n(x)) \int_0^1 \exp\big(rB^n_k + (1-r) B^n(x)\big) dr.
\eal
$$    
For $x\in S^n_{h,k}$ we have $\abs{B^n_k - B^n(x)} \le C \Delta t \, \xi_n(D^2 W)$
and using \eqref{L1rhon} this yields
$$
|J_k^n - J^{t_n,t_{n+1}}(x)| \leq  C \Delta t \, \xi_n(D^2 W) \exp\left( C \Delta t \|D^2 W\|_{L^\infty} \right)
$$
so that the desired result follows again from Proposition \ref{propK2}.
\end{proof}

\begin{rem}\label{rmk_jaco} 
If $\rho^0$ is only assumed to be an $L^1(\R^d)$ function (or a Radon measure), 
then $\xi_n(D^2 W)$ can be bounded by a constant using the $L^1$ bound on $\rho^n_h$, see \eqref{L1rhon}, and the $\mw^{1,\infty}(\R^d)$ smoothness of $\nabla W$. Arguing as in the proof above we then find an error estimate for the Jacobian matrices on the order of $\Delta t$.
\end{rem}

We next turn to the approximation errors involving the forward characteristic flows and we establish a series of estimates.
\begin{lem}  \label{estbareFn}
For $0 \leq n\leq N-1$, the following estimate holds
\begin{equation}  \label{lienefnefnbar}
\overline{e_F}^{n+1} \leq e^{C\Delta t}\overline{e_F}^n  + \tilde e_F^n
\end{equation}
with a constant $C$ independent of $\Delta t$ and $h$.
\end{lem}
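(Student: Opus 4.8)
The plan is to relate the integrated flow error at step $n+1$ to that at step $n$ by peeling off the last affine map. Recall from \eqref{def_fbar} that $\overline F_{h,k}^{n+1} = F^n_{h,k} \circ \overline F_{h,k}^{n}$, while on the exact side we have the semigroup identity $F^{0,t_{n+1}} = F^{t_n,t_{n+1}} \circ F^{0,t_n}$. For $x \in S^0_{h,k}$ I would therefore write
$$
\overline F_{h,k}^{n+1}(x) - F^{0,t_{n+1}}(x)
= \big(F^{n}_{h,k}(\overline F_{h,k}^{n}(x)) - F^{t_n,t_{n+1}}(\overline F_{h,k}^{n}(x))\big)
 + \big(F^{t_n,t_{n+1}}(\overline F_{h,k}^{n}(x)) - F^{t_n,t_{n+1}}(F^{0,t_n}(x))\big),
$$
so that $|\overline F_{h,k}^{n+1}(x) - F^{0,t_{n+1}}(x)| \le (\mathrm{I}) + (\mathrm{II})$ with the obvious splitting.

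For term $(\mathrm{II})$, I would use the Lipschitz bound \eqref{majFexlip} on the exact flow over the single time step, $|F^{t_n,t_{n+1}}|_{Lip} \le e^{CL\Delta t} \le e^{C\Delta t}$, which gives $(\mathrm{II}) \le e^{C\Delta t}\,|\overline F_{h,k}^{n}(x) - F^{0,t_n}(x)| \le e^{C\Delta t}\,\overline{e_F}^n$, since $x \in S^0_{h,k}$. For term $(\mathrm{I})$, the point is that it is precisely a local flow error of the type controlled by $\tilde e_F^n$: indeed $(\mathrm{I}) = |F^n_{h,k}(y) - F^{t_n,t_{n+1}}(y)|$ evaluated at $y = \overline F_{h,k}^{n}(x)$, and by \eqref{phikn-push-phi0k} the point $y$ ranges over $\supp(\varphi^n_{h,k}) = S^n_{h,k}$ as $x$ ranges over $S^0_{h,k}$. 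Since $S^n_{h,k} \subset \tilde S^n_{h,k}$, we get $(\mathrm{I}) \le \|F^{t_n,t_{n+1}} - F^n_{h,k}\|_{L^\infty(\tilde S^n_{h,k})} \le \tilde e_F^n$ by the definition \eqref{tilde-eFn}. Taking the supremum over $k$ and over $x \in S^0_{h,k}$ then yields \eqref{lienefnefnbar}.

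The main point requiring care is the bookkeeping of which set the intermediate point $y = \overline F_{h,k}^{n}(x)$ lives in: one must invoke \eqref{phikn-push-phi0k} to see that $\overline F_{h,k}^{n}$ maps $S^0_{h,k}$ onto $S^n_{h,k}$, and then observe that $e^n_F$ would not suffice here only because later in the analysis the extended set $\tilde S^n_{h,k}$ is the one that is actually needed — but since $S^n_{h,k} \subset \tilde S^n_{h,k}$, bounding $(\mathrm{I})$ by $\tilde e_F^n$ is harmless and in fact convenient. Everything else is a one-step Lipschitz estimate on the exact flow, so there is no real obstacle beyond this set-tracking.
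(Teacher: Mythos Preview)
Your proof is correct and follows essentially the same approach as the paper: both peel off the last step via $\overline F_{h,k}^{n+1} = F^n_{h,k}\circ \overline F_{h,k}^{n}$ and $F^{0,t_{n+1}} = F^{t_n,t_{n+1}}\circ F^{0,t_n}$, then split into a Lipschitz term (giving $e^{C\Delta t}\overline{e_F}^n$) and a local flow error term at the point $\overline F_{h,k}^{n}(x)\in S^n_{h,k}\subset \tilde S^n_{h,k}$ (giving $\tilde e_F^n$). The only cosmetic difference is that the paper names the intermediate points $y=F^{0,t_n}(x)$ and $\tilde y_k=\overline F_{h,k}^n(x)$ explicitly, whereas you carry the compositions through; the logic and the set-tracking are identical.
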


\begin{proof} 
 Given $x\in S^0_{k,h}$ we write $y=F^{0,t_n}(x)$ and $\tilde{y}_k= \overline{F}_{h,k}^n(x) \in S^n_{h,k}$. We have
$$
\begin{aligned}
 \left|\overline{F}_{h,k}^{n+1}(x) - F^{0,t_{n+1}}(x) \right|
    & = \left| F_{h,k}^n(\tilde{y}_k)  - F^{t_n,t_{n+1}}(y)\right| 
    \\
    & \leq \left| F^{t_n,t_{n+1}}( \tilde{y}_k )-  F^{t_n,t_{n+1}}(y)\right|
       + \left| F^{t_n,t_{n+1}}( \tilde{y}_k ) -  F_{h,k}^n(\tilde{y}_k)\right| 
   \\
    & \leq \left|  F^{t_n,t_{n+1}}  \right|_{Lip}| \tilde{y}_k -y|
        + \| F^{t_n,t_{n+1}} - F_{h,k}^n\|_{L^{\infty}(S^n_{h,k})}  
    \\
    & \leq e^{C\Delta t} \overline{e_F}^n+ \tilde e_F^n
\end{aligned} 
$$
by using $S^n_{h,k} \subset \tilde S^n_{h,k}$ and the Lipschitz bound \eqref{majFexlip} on the exact flow.
\end{proof}

\begin{prop}  \label{esteFn}
If $\rho^0 \in \mw^{1,1}_+(\R^d)$, then the following estimate holds
\beq \label{majprop42}
\tilde e_F^{n } \leq  C \Delta t( \Delta t +  h^2+
   (h+\Delta t) \theta_n +  \overline{e_F}^n) \quad \mbox{for} \quad 0 \leq n \leq N,
\eeq
with a constant $C$ independent of $\Delta t$ and $h$.
\end{prop}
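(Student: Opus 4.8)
The plan is to estimate, for a fixed $k$ and any $x \in \tilde S^n_{h,k}$, the local flow error $F^{t_n,t_{n+1}}(x) - F^n_{h,k}(x)$ by comparing the exact flow with its affine (first-order Taylor) approximation at the particle center $x^n_k$. Recalling from \eqref{Fnk} that $F^n_{h,k}(x) = x^{n+1}_k + J^n_k(x-x^n_k)$, I would first split
\[
F^{t_n,t_{n+1}}(x) - F^n_{h,k}(x) = (\mathrm{I}) + (\mathrm{II}) + (\mathrm{III}),
\]
with
\[
(\mathrm{I}) := F^{t_n,t_{n+1}}(x^n_k) - x^{n+1}_k, \qquad (\mathrm{II}) := (J^{t_n,t_{n+1}}(x^n_k) - J^n_k)(x-x^n_k),
\]
\[
(\mathrm{III}) := F^{t_n,t_{n+1}}(x) - F^{t_n,t_{n+1}}(x^n_k) - J^{t_n,t_{n+1}}(x^n_k)(x-x^n_k),
\]
and bound the three pieces separately. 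Throughout I would use that $|x-x^n_k| \le C(h+\Delta t)$ on $\tilde S^n_{h,k}$, which is exactly \eqref{est_supp_tSnk} in Lemma \ref{lem_supp}.

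The first two pieces follow from estimates already at hand. For $(\mathrm{I})$, subtracting the Euler step \eqref{schemEuler} from the integral identity \eqref{expFex} gives $(\mathrm{I}) = \int_{t_n}^{t_{n+1}} (u(\tau,F^{t_n,\tau}(x^n_k)) - u^n_k)\, d\tau$, so the discrete-velocity estimate \eqref{estvelobis} of Proposition \ref{propK1} yields $|(\mathrm{I})| \le C\Delta t(h^2 + \Delta t + \overline{e_F}^n)$. For $(\mathrm{II})$, the Jacobian error at the center \eqref{errbound-J} of Corollary \ref{Cor_ej2} together with $|x-x^n_k| \le C(h+\Delta t)$ gives $|(\mathrm{II})| \le C\Delta t(\theta_n + \Delta t)(h+\Delta t)$, which I would simplify to $C\Delta t((h+\Delta t)\theta_n + \Delta t)$ by absorbing the product $\Delta t^2(h+\Delta t)$ into $C\Delta t^2$ since $h$ and $\Delta t$ are bounded.

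The main work, and what I expect to be the real obstacle, is the second-order Taylor remainder $(\mathrm{III})$. Writing it as
\[
(\mathrm{III}) = \int_0^1 (J^{t_n,t_{n+1}}(x^n_k + s(x-x^n_k)) - J^{t_n,t_{n+1}}(x^n_k))(x-x^n_k)\, ds,
\]
the question reduces to a Lipschitz-in-space bound for the exact forward Jacobian over a single time step, of the form $|J^{t_n,t_{n+1}}(y) - J^{t_n,t_{n+1}}(y')| \le C\Delta t\, |y-y'|$ — with the crucial extra factor $\Delta t$ reflecting that $J^{t_n,t_n} \equiv I_d$ is constant in space. The difficulty is that under the present hypothesis only $\nabla W \in \mw^{1,\infty}(\R^d)$ is available, so there is no third derivative of $W$ and the flow cannot be Taylor-expanded crudely. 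Instead I would differentiate the integral identity \eqref{expJex} in the base point and set up a Gronwall inequality for $g(\tau) := |J^{t_n,\tau}(y) - J^{t_n,\tau}(y')|$: subtracting the identities for $y$ and $y'$, the integrand is controlled by $L\, g(\tau)$ plus $|(D^2W * \rho(\tau))(F^{t_n,\tau}(y)) - (D^2W * \rho(\tau))(F^{t_n,\tau}(y'))|\, |J^{t_n,\tau}(y')|$; the last factor is bounded via \eqref{majJex}, and the increment of $D^2W * \rho(\tau)$ is Lipschitz in its argument with constant $\le \|D^2W\|_{L^\infty}\|\rho\|_{L^\infty(0,T;\mw^{1,1})}$ — this is precisely where the hypothesis $\rho^0 \in \mw^{1,1}_+(\R^d)$, hence $\rho \in L^\infty(0,T;\mw^{1,1}(\R^d))$, enters — so it is bounded by $C|F^{t_n,\tau}(y) - F^{t_n,\tau}(y')| \le C|y-y'|$ using the Lipschitz bound \eqref{majFexlip}. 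Since $g(t_n) = 0$, Gronwall gives $g(\tau) \le C\Delta t\, |y-y'|$ on $[t_n,t_{n+1}]$; in particular $|J^{t_n,t_{n+1}}(y) - J^{t_n,t_{n+1}}(y')| = g(t_{n+1}) \le C\Delta t\, |y-y'|$. Taking $y = x^n_k + s(x-x^n_k)$ and $y' = x^n_k$, and using $|x-x^n_k| \le C(h+\Delta t)$ once more, gives $|(\mathrm{III})| \le C\Delta t\, |x-x^n_k|^2 \le C\Delta t(h+\Delta t)^2 \le C\Delta t(h^2 + \Delta t)$.

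Finally I would collect the three bounds, take the supremum over $k$ and over $x \in \tilde S^n_{h,k}$, and clean up $(h+\Delta t)^2$ and the cross terms using that $h$ and $\Delta t$ are bounded, to reach $\tilde e_F^n \le C\Delta t(\Delta t + h^2 + (h+\Delta t)\theta_n + \overline{e_F}^n)$, which is \eqref{majprop42}.
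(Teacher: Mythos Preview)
Your proposal is correct and follows essentially the same decomposition and estimates as the paper: the terms $(\mathrm{I})$, $(\mathrm{II})$, $(\mathrm{III})$ are exactly the paper's $(a)$, $(b)$, $(c)$ (up to sign), and your bounds for $(\mathrm{I})$ and $(\mathrm{II})$ are identical to the paper's. For $(\mathrm{III})$ the paper differentiates \eqref{expJex} in the base point to bound $\abs{\pa_m J^{t_n,t_{n+1}}}\le C\Delta t$ via Gronwall and then applies a second-order Taylor expansion, whereas you run the same Gronwall argument on the finite difference $g(\tau)=\abs{J^{t_n,\tau}(y)-J^{t_n,\tau}(y')}$; both routes use $\rho\in L^\infty(0,T;\mw^{1,1})$ in the same way (to make $D^2W*\rho$ Lipschitz) and yield the same bound $C\Delta t(h+\Delta t)^2$.
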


\begin{proof}
Given $x\in \tilde S^n_{h,k}$, we rewrite the linearized flow \eqref{Fnk} as follows,
$$\begin{aligned}
F_{h,k}^n(x) 
& =   F_{h,k}^n(x_k^n) + J_{k}^n(x -x_k^n) = (a) + (b) +  (c) + F^{t_n,t_{n+1}}(x) 
\end{aligned}
$$
with
$$
\begin{aligned} 
 &(a) := F_{h,k}^n(x_k^n) - F^{t_n,t_{n+1}}(x_k^n) \\
 &(b) := \left( J_{k}^n- J^{t_n,t_{n+1}}(x_k^n)  \right)(x -x_k^n)  \\
 &(c) := F^{t_n,t_{n+1}}(x_k^n)  + J^{t_n,t_{n+1}}(x_k^n)(x-x_k^n)-F^{t_n,t_{n+1}}(x).
\end{aligned} 
$$
Using \eqref{schemEuler} and the expression \eqref{expFex} for the exact flow, we then compute
\[
|(a)| = \int_{t_n}^{t_{n+1}} \left| (\nabla W * \rho^n_h)(x_k^n) + u(\tau, F^{t_n,\tau}(x_k^n)) \right| d\tau  
        \leq C\Delta t \left( h^2 + \Delta t + \overline{e_F}^n \right)
\]
where the inequality follows from \eqref{estvelobis} (note that here $C$ depends on $\norm{\rho^0}_{\mw^{1,1}}$).
For $(b)$, we easily get using estimate \eqref{errbound-J} in Corollary \ref{Cor_ej2} and Lemma~\ref{lem_supp} that
$$
\begin{aligned} 
|(b)| \leq \abs{J_k^n - J^{t_n,t_{n+1}}(x_k^n)}\abs{x -x_k^n} \leq C \Delta t (\theta_n+\Delta t )(h + \Delta t).
\end{aligned}
$$
Turning to $(c)$ we next differentiate \eqref{expJex} and obtain for $1 \leq i,j,m \leq d$,
$$\begin{aligned}
\pa_m\lt(J^{t_n,t_{n+1}}\rt)_{ij} &= -\sum_{l = 1}^d\int_{t_n}^{t_{n+1}} (\pa_{il} W * \nabla \rho(\tau))(F^{t_n,\tau}(x))\pa_m F^{t_n,\tau}(x)\lt(J^{t_n,\tau}(x)\rt)_{lj} d\tau\cr
&\quad -\sum_{l=1}^d \int_{t_n}^{t_{n+1}} (\pa_{il}W * \rho(\tau))(F^{t_n,\tau}(x))\pa_m\lt(J^{t_n,\tau}(x)\rt)_{lj} d\tau.
\end{aligned}$$
This yields
$$\begin{aligned}
\abs{\pa_m J^{t_n,t_{n+1}}(x)} \leq C\Delta t + C\int_{t_n}^{t_{n+1}} \abs{\pa_m J^{t_n,\tau}(x)} d\tau,
\end{aligned}$$
where we used that $\rho \in L^\infty(0,T;\mw^{1,1}(\R^d))$, $\nabla W \in \mw^{1,\infty}(\R^d)$ and $|\pa_m F^{t_n,\tau}| \leq C$ for some $C$, see \eqref{majJex}. Invoking the Gronwall Lemma, we then obtain
\[
\abs{\pa_m J^{t_n,t_{n+1}}(x)} \leq C\Delta t e^{C\Delta t}, \qquad m = 1, \cdots, d, 
\]
where $C$ only depends on $d$, $T$, $L$ and $\norm{\rho^0}_{\mw^{1,1}}$. With a Taylor expansion this gives
$$
|(c)|\leq \frac{1}{2} \left|D^2 F^{t_n,t_{n+1}}
     (\eta_k^n)\right||x-x_k^n|^2 \leq C \Delta t (h+\Delta t)^2
$$ 
for some $\eta_k^n$ between $x$ and $x_{k}^n$ and a constant $C$ that only depends on $d$, $T$, $L$ 
and $\norm{\rho^0}_{\mw^{1,1}}$. Combining the above estimates yields the desired result.
\end{proof}
 
We finally provide estimates for $\overline{e_F}^n$ and $\tilde e_F^n$.

\begin{coro} \label{majebarFn} If $\rho^0 \in \mw^{1,1}_+(\R^d)$, then the following estimates hold for $0 \leq n \leq N$,
$$
\overline{e_F}^{n} \leq C(h^2 + \Delta t + h\tilde\theta_{n-1}) 
\quad \text{and} \quad 
\tilde e_F^n \leq C\Delta t (h^2 + \Delta t + h\tilde\theta_{n-1}),
$$
with $\tilde\theta_n := \max_{m\le n} \theta_m$, see \eqref{defthetan}, 
and a constant $C$ independent of $\Delta t$ and $h$.
\end{coro}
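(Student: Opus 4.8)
The plan is a discrete Gronwall argument: I would close the recursion \eqref{lienefnefnbar} of Lemma~\ref{estbareFn} by inserting the bound of Proposition~\ref{esteFn} for $\tilde e_F^n$, exploiting the uniform $L^1$ bound \eqref{L1rhon} on $\rho_h^n$ to keep the nonlinear feedback through $\theta_n$ under control. The case $n=0$ is trivial since $\overline F_{h,k}^0 = I_d = F^{0,0}$, so $\overline{e_F}^0 = 0$; for $n\ge 1$, substituting \eqref{majprop42} into \eqref{lienefnefnbar} gives, for $0\le m\le N-1$,
$$
\overline{e_F}^{m+1} \le \big(e^{C\Delta t}+C\Delta t\big)\,\overline{e_F}^m + C\Delta t\big(\Delta t + h^2 + (h+\Delta t)\theta_m\big).
$$
Since $h$ and $\Delta t$ are bounded by absolute constants, $e^{C\Delta t}+C\Delta t \le e^{C'\Delta t}$ for a suitable $C'$, so the amplification accumulated over at most $N$ steps is bounded by $e^{C'T}$.

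Next I would iterate this inequality from $\overline{e_F}^0 = 0$, obtaining
$$
\overline{e_F}^n \le C e^{C'T}\sum_{m=0}^{n-1}\Delta t\big(\Delta t + h^2 + (h+\Delta t)\theta_m\big)
= C e^{C'T}\Big( n\Delta t\,(\Delta t + h^2) + (h+\Delta t)\,\Delta t\!\sum_{m=0}^{n-1}\theta_m\Big).
$$
Here $n\Delta t \le T$ and $\Delta t\sum_{m=0}^{n-1}\theta_m \le T\,\tilde\theta_{n-1}$ by definition of $\tilde\theta_{n-1}=\max_{m\le n-1}\theta_m$. Splitting $(h+\Delta t)\theta_m$ into $h\theta_m$ and $\Delta t\,\theta_m$ and using $\theta_m \le \|\rho^m\|_{L^1}+\|\rho_h^m\|_{L^1}\le C$ from \eqref{L1rhon}, the second piece is absorbed into the $\Delta t$ term, leaving exactly the $h\tilde\theta_{n-1}$ contribution. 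This yields $\overline{e_F}^n \le C(h^2 + \Delta t + h\tilde\theta_{n-1})$, and the estimate for $\tilde e_F^n$ then follows immediately by inserting this bound back into Proposition~\ref{esteFn} and absorbing the lower-order $\Delta t$-terms in the same manner.

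The argument is essentially bookkeeping, so there is no deep obstacle; the one point that requires care is to ensure the Gronwall constant stays uniform in $n$ and that the nonlinearity does not create a circular dependence. This is precisely where the a priori $L^1$ bound \eqref{L1rhon} on the approximated density is indispensable: it turns the potentially dangerous coupling $(h+\Delta t)\theta_n$ into $h\theta_n$ plus a harmless $O(\Delta t)$ remainder, so that only the scaled quantity $h\tilde\theta_{n-1}$ survives. One should also note that Proposition~\ref{esteFn} is available on the full range $0\le n\le N$, whereas Lemma~\ref{estbareFn} is only a recursion for $n\le N-1$; iterating the latter from $n=0$ nonetheless reaches $\overline{e_F}^N$.
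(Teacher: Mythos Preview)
Your proof is correct and follows essentially the same route as the paper: combine the recursion \eqref{lienefnefnbar} with the local bound \eqref{majprop42}, absorb the $C\Delta t\,\overline{e_F}^m$ term into the exponential factor, and iterate from $\overline{e_F}^0=0$. In fact your treatment is slightly more explicit than the paper's, which writes the recursion directly as $\overline{e_F}^{n+1}\le e^{2C\Delta t}\overline{e_F}^n + C\Delta t(h^2+\Delta t + h\tilde\theta_n)$ without spelling out how the $(h+\Delta t)\theta_n$ from \eqref{majprop42} collapses to $h\tilde\theta_n$ plus a harmless $\Delta t$; your use of the a priori bound $\theta_m\le C$ from \eqref{L1rhon} to absorb the $\Delta t\,\theta_m$ piece fills that gap cleanly.
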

\begin{proof}
Using \eqref{lienefnefnbar}, \eqref{majprop42} and the fact that $e^{C\Delta t} + C\Delta t \leq e^{2C\Delta t}$, we find
$$ 
    \overline{e_F}^{n+1} \leq e^{2C\Delta t}\overline{e_F}^n + C\Delta t(h^2 + \Delta t + h\tilde \theta_n),
$$ 
hence
    \[
    \overline{e_F}^{n+1} \leq e^{2CN\Delta t} (\overline{e_F}^0 + N \Delta t (h^2 + \Delta t + h\tilde \theta_n)) 
                         \leq C(h^2 + \Delta t + h\tilde\theta_n), \quad n\le N-1,
    \]
follows by a summation using $\overline{e_F}^0 = 0$. The bound on $\tilde e_F^n$ is obtained with \eqref{majprop42}.
\end{proof}

\subsection{Proof of $L^1$ and $L^{\infty}$ convergence results}

\begin{theo} \label{esthetan} 
Assume $\Delta t \le C h$. If $\rho^0 \in \mw^{1,1}_+(\R^d)$ and $\nabla W \in \mw^{1,\infty}(\R^d)$, then 
$$
\max_{0\leq n\leq N}\|\rho(t_n) - \rho^n_h\|_{L^1}
 \leq C \left(\|\rho^0 - \rho^0_h\|_{L^1} +\frac{\Delta t}{h} +  h \right)  
$$
holds with a constant $C$ depending only on $d$, $T$, $L$, and $\norm{\rho^0}_{\mw^{1,1}}$.
\end{theo}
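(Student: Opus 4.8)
The plan is to derive a discrete Gronwall inequality for $\theta_n = \norm{\rho^n-\rho^n_h}_{L^1}$ by comparing, on each time step, the LTP update with one step of the exact characteristic flow. Introducing the intermediate density $\hat\rho^{n+1} := F^{t_n,t_{n+1}}\#\rho^n_h$, the triangle inequality gives
$$
\theta_{n+1} \le \norm{\rho^{n+1}-\hat\rho^{n+1}}_{L^1} + \norm{\hat\rho^{n+1}-\rho^{n+1}_h}_{L^1}.
$$
Since $\rho^{n+1} = F^{t_n,t_{n+1}}\#\rho^n$ and the $L^1$ norm of a function is preserved by the push-forward along a diffeomorphism (change of variables, using $j^{t_n,t_{n+1}}>0$), the first term equals $\norm{\rho^n-\rho^n_h}_{L^1} = \theta_n$. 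Thus everything reduces to the one-step consistency error $\norm{\hat\rho^{n+1}-\rho^{n+1}_h}_{L^1}$, i.e. to the difference between transporting each particle $\varphi^n_{h,k}$ by the exact flow and by its affine linearization $F^n_{h,k}$.

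Using $\hat\rho^{n+1}=\sum_k\omega_k F^{t_n,t_{n+1}}\#\varphi^n_{h,k}$ and $\rho^{n+1}_h=\sum_k\omega_k F^n_{h,k}\#\varphi^n_{h,k}$, together with $\sum_k\abs{\omega_k}\le C$ (see \eqref{majweight2}), it suffices to bound $\norm{F^{t_n,t_{n+1}}\#\varphi^n_{h,k}-F^n_{h,k}\#\varphi^n_{h,k}}_{L^1}$ uniformly in $k$. Writing both push-forwards as densities I would use the splitting
$$
\bal
(F^n_{h,k}\#\varphi^n_{h,k})(x) - (F^{t_n,t_{n+1}}\#\varphi^n_{h,k})(x)
&= \tfrac{1}{j^n_k}\Big[\varphi^n_{h,k}\big((F^n_{h,k})^{-1}(x)\big) - \varphi^n_{h,k}\big(F^{t_{n+1},t_n}(x)\big)\Big]\\
&\quad + \varphi^n_{h,k}\big(F^{t_{n+1},t_n}(x)\big)\Big[\tfrac{1}{j^n_k} - \tfrac{1}{j^{t_n,t_{n+1}}(F^{t_{n+1},t_n}(x))}\Big].
\eal
$$
For the first bracket, $\varphi^n_{h,k}$ is Lipschitz with $\norm{\nabla\varphi^n_{h,k}}_{L^\infty}\le Ch^{-(d+1)}$ (by \eqref{phikn}, \eqref{enchkn}, \eqref{est-Dnk}), $(F^n_{h,k})^{-1}$ is Lipschitz with constant $\le e^{C\Delta t}$ (Lemma~\ref{lem_jh} and \eqref{schemEuler}), and on the support of the integrand the two points $(F^n_{h,k})^{-1}(x)$ and $F^{t_{n+1},t_n}(x)$ differ by at most $C\tilde e^n_F$ (write $x$ as the image of a point of $S^n_{h,k}$ or of $\tilde S^n_{h,k}$ under the relevant flow, then invoke the definitions \eqref{eFn}--\eqref{tilde-eFn}); since that support has measure $\le Ch^d$ (Lemma~\ref{lem_supp}), this piece contributes $\le C\tilde e^n_F/h$ in $L^1$. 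For the second bracket, on the support of $\varphi^n_{h,k}(F^{t_{n+1},t_n}(\cdot))$ one has $F^{t_{n+1},t_n}(x)\in S^n_{h,k}$, so the bracket is $\le e^n_j$ by \eqref{def_ejn}; the change of variables $y=F^{t_{n+1},t_n}(x)$ with $j^{t_n,t_{n+1}}\le e^{C\Delta t}$ and $\int\varphi^n_{h,k}=\int\varphi=1$ bounds this piece by $Ce^n_j$. Altogether $\norm{\hat\rho^{n+1}-\rho^{n+1}_h}_{L^1}\le C(\tilde e^n_F/h+e^n_j)$, hence
$$
\theta_{n+1} \le \theta_n + C\Big(\frac{\tilde e^n_F}{h} + e^n_j\Big).
$$

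To close the recursion I would insert the already established estimates: Corollary~\ref{majebarFn} gives $\tilde e^n_F\le C\Delta t(h^2+\Delta t+h\tilde\theta_{n-1})$ and Corollary~\ref{Cor_ej} gives $e^n_j\le C\Delta t(\theta_n+\Delta t+h)$. Since $\Delta t\le Ch$ and $h,\Delta t$ are bounded, the lower-order terms are absorbed and one gets $\theta_{n+1}\le\theta_n + C\Delta t(\tilde\theta_n + h + \Delta t/h)$. The right-hand side is nondecreasing in $n$, so the same bound holds for $\tilde\theta_{n+1}=\max_{0\le m\le n+1}\theta_m$, namely $\tilde\theta_{n+1}\le(1+C\Delta t)\tilde\theta_n + C\Delta t(h+\Delta t/h)$, and the discrete Gronwall lemma over $0\le n\le N=T/\Delta t$ (with $\tilde\theta_0=\theta_0=\norm{\rho^0-\rho^0_h}_{L^1}$) yields $\max_n\theta_n\le e^{CT}\big(\theta_0+h+\Delta t/h\big)$. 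The main obstacle is the per-particle consistency estimate of the third paragraph: one must check that the $h^{-(d+1)}$ blow-up of $\nabla\varphi^n_{h,k}$ is compensated exactly by the $O(h^d)$ volume of the deformed support, that the flow error $\tilde e^n_F$ is read off on the enlarged sets $\tilde S^n_{h,k}$ while the Jacobian-determinant error $e^n_j$ only needs $S^n_{h,k}$, and -- crucially -- that after coupling $\tilde e^n_F$ to $\theta$ via Corollary~\ref{majebarFn} the remaining inequality still has the $(1+C\Delta t)$ structure required by Gronwall.
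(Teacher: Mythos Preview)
Your proof is correct and follows essentially the same route as the paper: your intermediate density $\hat\rho^{n+1}=F^{t_n,t_{n+1}}\#\rho^n_h$ and the two-bracket splitting of $(F^n_{h,k}\#\varphi^n_{h,k})-(F^{t_n,t_{n+1}}\#\varphi^n_{h,k})$ reproduce exactly the paper's $A_n$, $C_n$, $B_n$ terms (in that order), and the closing Gronwall argument is identical. The only cosmetic difference is that the paper works directly with the arguments of the reference shape $\varphi$ (bounding $|D^n_k(F^{t_n,t_{n-1}}(y)-x^{n-1}_k)-D^{n+1}_k(y-x^n_k)|\le|D^n_k|\tilde e^{n-1}_F$), whereas you bound $|(F^n_{h,k})^{-1}(x)-F^{t_{n+1},t_n}(x)|$ via the Lipschitz constant of $(F^n_{h,k})^{-1}$; both lead to the same $C\tilde e^n_F/h$ estimate for the $C_n$-type term.
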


\begin{proof}
Let $y\in \R^d$. Using the relation $\rho(t_n) = F^{t_n,t_{n-1}} \# \rho(t_{n-1})$ and the 
form \eqref{defrhohn} of the approximate solution together with the fact that $h^n_k = h^{n-1}_k j^{n-1}_k$,  
we decompose the error $\rho(t_n,y) - \rho_h^n(y)$ into three parts as 
\begin{equation}\label{est_main}
\begin{aligned}
 \rho(t_n,y) - \rho_h^n(y) &=
  \underbrace{ \left[\rho\left(t_{n-1}, F^{t_{n},t_{n-1}}(y)\right) 
     - \rho_h^{n-1}\left(F^{t_n,t_{n-1}}(y)\right) \right]j^{t_n,t_{n-1}}(y)     }_{A_n(y)} \\
&+  \underbrace{ \sum_{k\in \Z^d} \frac{\omega_k }{h_k^{n-1}}  \varphi \left( \frac{D_k^{n-1}}{h}\lt( F^{t_n,t_{n-1}}(y)- x_k^{n-1}\rt) \right)  \left[   j^{t_n,t_{n-1}}(y) - \frac{1}{j_{k}^{n-1}} \right]}_{B_n(y)} \\
&+ \underbrace{\sum_{k\in \Z^d} \frac{\omega_k }{h_k^n} \left[ \varphi \left( \frac{D_k^{n-1}}{h} (F^{t_n,t_{n-1}}(y)- x_k^{n-1} ) \right) -  \varphi \left( \frac{D_k^{n}}{h}  (y- x_k^{n})  \right) \right]}_{C_n(y)}.
\end{aligned}
\end{equation}

\noindent $\diamond$ Estimate of $\|A_n\|_{L^1}$: Using the one-to-one change of variable $x= F^{t_{n},t_{n-1}}(y)$, we easily find that
$$ \int_{\R^d} |A_n(y)|dy 
= \int_{\R^d} | \rho(t_{n-1},x)-\rho_h^{n-1}(x)|dx=\theta_{n-1}.
$$ 

\noindent $\diamond$ Estimate of $\|B_n\|_{L^1}$: By means of the same change of variable and the 
relation $j^{t_n,t_{n-1}}(y) = (j^{ t_{n-1},t_n}(x))^{-1}$, we obtain
$$ \begin{aligned} \int_{\R^d} |B_n(y)|dy 
&\leq  \int_{\R^d}   \sum_{k\in \Z^d}  \abs{\omega_k}
  \varphi_{h,k}^{n-1}(x)  \left|  \frac{1}{ j^{ t_{n-1},t_n}(x) } 
   - \frac{1}{j_{k}^{n-1}}  \right| j^{t_{n-1},t_n}(x)dx\\
& \leq  e_{j}^{n-1}   \norm{ j^{ t_{n-1},t_n}}_{L^\infty} \int_{\R^d}\sum_{k\in \Z^d} \abs{ \omega_k}  \varphi_{h,k}^{n-1}(x) dx 
    \leq  C  e_{j}^{n-1},
\end{aligned}
$$
due to \eqref{majJex}, \eqref{L1rhon} and \eqref{def_ejn}, indeed $x$ can be taken in $S^{n-1}_{h,k}$ in the $k$-th term.

\noindent $\diamond$ Estimate of $\|C_n\|_{L^1}$: Writing again $x = F^{t_{n},t_{n-1}}(y)$, we observe that in the $k$-th term, we must consider the cases where $y \in S^n_{h,k}$ and those where $x \in S^{n-1}_{h,k}$. Thus, $x$ must be taken in the extended particle support $\tilde S^{n-1}_{h,k}$, see \eqref{tilde-eFn}. Using the incremental relation \eqref{defhkn} we then estimate
$$
    \abs{D_k^{n-1} (x - x_k^{n-1} ) - D_k^{n} (y- x_k^{n})}
    = \abs{D^n_k (x_k^{n} + J^{n-1}_k(x-x^{n-1}_k) - F^{t_{n-1},t_n}(x))}
    \le \abs{D^n_k} \tilde e^{n-1}_F 
$$
see \eqref{Fnk}, \eqref{eFn}. To obtain a global bound we next observe that the measure of $\tilde S^{n-1}_{h,k}$ is of order $(h+\Delta t)^d \le Ch^d$ according to Lemma~\ref{lem_supp} and the assumption $\Delta t \leq C h$, as well as that of  $F^{t_{n-1},t_{n}}(\tilde S^{n-1}_{h,k})$ according to \eqref{est_reja}. Using the above observations and the fact 
that the reference shape $\varphi$ is assumed to be Lipschitz, we find
\beq
\label{est-Cn-L1}
\int_{\R^d} |C_n(y)|dy 
    \le C h^d \sum_{k \in \Z^d} 
            \frac{\abs{\omega_k}}{h_k^n} \frac{\abs {D^n_k}}{h} \tilde e^{n-1}_F
    \le C \frac{\tilde e^{n-1}_F}{h},
\eeq
where the last inequality follows from the uniform bounds on the matrices $D^n_k$ and their determinants (Lemma~\ref{lem_jh}), and from the estimates inside \eqref{L1rhon}.

\noindent $\diamond$ Conclusion:  We now combine all the estimates above and \eqref{majeJn1bis} in Corollary \ref{Cor_ej} to obtain
\[
\theta_n \leq \theta_{n-1} + Ce_j^{n-1} + C\frac{\tilde e_F^{n-1}}{h}
\leq (1 + C\Delta t)\theta_{n-1} + C\Delta t(\Delta t + h) + C\frac{\tilde e_F^{n-1}}{h}.
\]
Using Corollary \ref{majebarFn} to estimate the flow error yields
\[
\tilde \theta_n \leq (1 + C\Delta t)\tilde\theta_{n-1} + C\Delta t \left(\Delta t + h +\frac{\Delta t}{h}\right).
\]
Since $h\leq 1$, we conclude that
\[
\tilde \theta_n \leq e^{CN\Delta t}\theta_0 + e^{CN\Delta t}\left(h+\frac{\Delta t}{h}\right) \leq C\left(h+\theta_0 + \frac{\Delta t}{h}\right).
\]
\end{proof}

We next derive $L^\infty$-estimates. Here the required regularity propagates in time. As proved in the Appendix, Proposition~\ref{rhoW11}, the unique solution to \eqref{main-eq} belongs to $\rho \in L^\infty(0,T;(\mw^{1,1}_+\cap L^\infty)(\R^d))$ provided that $\rho^0 \in (\mw^{1,1}_+ \cap L^\infty)(\R^d)$.

\begin{theo}\label{prop_inf}
If $\Delta t \le C h$, $\rho^0 \in \mw^{1,1}_+(\R^d)\cap L^\infty(\R^d)$ and  $\nabla W \in \mw^{1,\infty}(\R^d)$, then 
$$
\max_{0\leq n\leq N}\|\rho(t_n) - \rho^n_h\|_{L^{\infty}} 
\leq C\left(h + \|\rho^0 - \rho^0_h\|_{L^{\infty}}+ \|\rho^0 - \rho^0_h\|_{L^{1}}  + \frac{\Delta t}{h}\right)
$$
holds with a constant independent of $h$ and $\Delta t$.

\end{theo}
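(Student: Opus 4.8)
The plan is to reuse the three-term splitting \eqref{est_main} of the pointwise error, $\rho(t_n,y)-\rho^n_h(y) = A_n(y)+B_n(y)+C_n(y)$, introduced in the proof of Theorem~\ref{esthetan}, but now to bound each term in the $L^\infty$ norm rather than in $L^1$. Two observations replace the changes of variables used there. First, at most $O(1)$ particles overlap at any point: since $\theta_n \le \|\rho(t_n)\|_{L^1}+\|\rho^n_h\|_{L^1}\le C$ is a priori bounded, Corollary~\ref{majebarFn} and the hypothesis $\Delta t\le Ch$ give $\overline{e_F}^{n}/h \le C(h+\Delta t/h+\tilde\theta_{n-1})\le C$, whence $\kappa_n\le C$ uniformly by Lemma~\ref{lem_majKn}. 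Second, since $\rho^0\in L^\infty$, the choice $p=\infty$ in \eqref{majweight} together with $h_k^n\ge ch^d$ from Lemma~\ref{lem_jh} yields the uniform bound $|\omega_k|/h_k^n \le C\|\rho^0\|_{L^\infty}$, which is exactly what is needed to control pointwise sums of particle shapes.

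With these in hand the three terms are estimated as follows. For $A_n(y)=[\rho(t_{n-1},F^{t_n,t_{n-1}}(y))-\rho^{n-1}_h(F^{t_n,t_{n-1}}(y))]\,j^{t_n,t_{n-1}}(y)$ the bound $\|A_n\|_{L^\infty}\le \|j^{t_n,t_{n-1}}\|_{L^\infty}\,\varepsilon_{n-1}\le(1+C\Delta t)\varepsilon_{n-1}$ follows from \eqref{est_reja}. In $B_n$, only $\kappa_{n-1}\le C$ terms do not vanish, and using $j^{t_n,t_{n-1}}(y)=1/j^{t_{n-1},t_n}(F^{t_n,t_{n-1}}(y))$ each is bounded by $\tfrac{|\omega_k|}{h_k^{n-1}}\|\varphi\|_{L^\infty}\, e_j^{n-1}\le C\|\rho^0\|_{L^\infty}e_j^{n-1}$, hence $\|B_n\|_{L^\infty}\le Ce_j^{n-1}$. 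For $C_n$, writing $x=F^{t_n,t_{n-1}}(y)$ and proceeding as in the proof of Theorem~\ref{esthetan}, the difference of the two arguments of $\varphi$ is at most $|D_k^n|\,\tilde e_F^{n-1}$; combining the Lipschitz continuity of $\varphi$, the uniform bounds on $D_k^n$ and $h_k^n$ from Lemma~\ref{lem_jh}, the bound on $|\omega_k|/h_k^n$, and the fact that only $O(1)$ indices contribute (those with $y\in S^n_{h,k}$ together with those with $x\in\tilde S^{n-1}_{h,k}$), we obtain $\|C_n\|_{L^\infty}\le C\tilde e_F^{n-1}/h$.

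Gathering the three bounds yields the recursion
\[
\varepsilon_n\le(1+C\Delta t)\varepsilon_{n-1}+Ce_j^{n-1}+C\frac{\tilde e_F^{n-1}}{h}.
\]
It then remains to substitute the available estimates: Corollary~\ref{Cor_ej} gives $e_j^{n-1}\le C\Delta t(\theta_{n-1}+\Delta t+h)$, Corollary~\ref{majebarFn} gives $\tilde e_F^{n-1}\le C\Delta t(h^2+\Delta t+h\tilde\theta_{n-2})$, and Theorem~\ref{esthetan} bounds $\theta_{n-1}$ and $\tilde\theta_{n-2}$ by $C(\theta_0+\Delta t/h+h)$. Since $\Delta t\le Ch\le C$, both $e_j^{n-1}$ and $\tilde e_F^{n-1}/h$ are then $\le C\Delta t(\theta_0+\Delta t/h+h)$, so that $\varepsilon_n\le(1+C\Delta t)\varepsilon_{n-1}+C\Delta t(\theta_0+\Delta t/h+h)$. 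A discrete Gronwall argument, summing over $n\le N=T/\Delta t$ from $\varepsilon_0=\|\rho^0-\rho^0_h\|_{L^\infty}$, gives $\max_n\varepsilon_n\le e^{CT}(\varepsilon_0+\theta_0+\Delta t/h+h)$, which is the claim.

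The step I expect to require the most care is the bookkeeping in $C_n$: one must precisely identify the finitely many indices $k$ that contribute at a given point $y$, verify that their number stays uniformly bounded (again through the a priori boundedness of $\theta_n$ and Lemma~\ref{lem_majKn}), and check that the extended particle supports $\tilde S^{n-1}_{h,k}$ enter only through the already-controlled quantity $\tilde e_F^{n-1}$ and the uniform deformation estimates of Lemma~\ref{lem_jh}. Beyond that, the argument is a direct $L^\infty$ counterpart of the $L^1$ proof, the genuinely new structural input being the use of the hypothesis $\rho^0\in L^\infty$ through \eqref{majweight}.
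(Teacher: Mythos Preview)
Your proposal is correct and follows essentially the same approach as the paper: both use the three-term decomposition \eqref{est_main}, bound $A_n$, $B_n$, $C_n$ in $L^\infty$ via the overlap count $\kappa_n$ and the weight bound \eqref{majweight} with $p=\infty$, then invoke Theorem~\ref{esthetan} and Corollaries~\ref{Cor_ej}, \ref{majebarFn} before a discrete Gronwall. The only structural difference is a harmless reordering: you observe $\kappa_n\le C$ at the outset (using merely the a priori bound $\theta_n\le\|\rho(t_n)\|_{L^1}+\|\rho^n_h\|_{L^1}\le C$ in Corollary~\ref{majebarFn}), whereas the paper carries the explicit factors $(1+\overline{e_F}^m/h)^d$ through to an intermediate recursion \eqref{est_linf} and only then simplifies them via the sharper flow bounds coming from Theorem~\ref{esthetan}. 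Your shortcut is legitimate and slightly streamlines the argument; one small imprecision is that in the $C_n$ bookkeeping the contributing indices are those with $y\in S^n_{h,k}$ or $x\in S^{n-1}_{h,k}$ (not $\tilde S^{n-1}_{h,k}$), but this does not affect the bound since $\#(\mathcal K_n(y)\cup\mathcal K_{n-1}(x))\le\kappa_n+\kappa_{n-1}\le C$ either way.
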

\begin{proof}
Given $y\in \R^d$, we decompose $ \rho(t_n,y) - \rho_h^n(y)$ into three terms as in \eqref{est_main}. \newline

\noindent $\diamond$ Estimate of $\|A_n\|_{L^\infty}$: Using the bound \eqref{est_reja} on the exact Jacobian determinant,
we find
$$   \|A_n\|_{L^{\infty}}
 \leq     e^{C\Delta t} \varepsilon_{n-1}.
$$  

\noindent $\diamond$ Estimate of $\|B_n\|_{L^\infty}$: 
Writing again $x = F^{t_{n},t_{n-1}}(y)$, we observe that the $k$-th term vanishes if 
$x \not\in S^{n-1}_{h,k}$. In particular, the sum can be restricted to the indices $k$ in 
the set $\mathcal{K}_{n-1}(x)$. Gathering the bounds \eqref{enchkn} on $h^n_k$, \eqref{majweight} on $\omega_k$
and \eqref{majKn} on $\kappa_n := \sup_{x\in \R^d} \#(\mathcal{K}_{n-1}(x))$, we compute
$$
|B_n(y)| 
\le C \#(\mathcal{K}_{n-1}(x)) \norm{\rho^0}_{L^\infty} \norm{\varphi}_{L^\infty} e^{n-1}_j
\le C \left( 1+ \frac{\overline{e_F}^n}{h}\right)^d e^{n-1}_j.
$$

\noindent $\diamond$ Estimate of $\|C_n\|_{L^\infty}$:
Similarly as in the proof of Theorem~\ref{esthetan}, we observe that the $k$-th summand in $C_n(y)$
must be considered when $y \in S^n_{h,k}$ or when $x \in S^{n-1}_{h,k}$ (or both). Clearly the cardinality 
of the corresponding index set satisfies
$$
\#(\{k \in \Z^d : y \in S^n_{h,k} \text{ or } x \in S^{n-1}_{h,k}\}) \le 
\#(\mathcal{K}_n(y)) + \#(\mathcal{K}_{n-1}(x)) \le  \kappa_n + \kappa_{n-1}.
$$
Using the Lipschitz smoothness of the reference shape function $\varphi$ as in \eqref{est-Cn-L1}, and
again the bounds \eqref{enchkn} on $h^n_k$, \eqref{majweight} on $\omega_k$ and \eqref{majKn} on $\kappa_n$, we write
$$
|C_n(y)| \le C ( \kappa_n + \kappa_{n-1}) \frac{\tilde e^{n-1}_F}{h}
\le C \left( \lt(1+ \frac{\overline{e_F}^n}{h}\rt)^d + \lt(1+\frac{\overline{e_F}^{n-1}}{h}\rt)^d\right) \frac{\tilde e_F^{n-1}}{h}.
$$

\noindent $\diamond$ Conclusion: Combining the estimates above, we have
\beq\label{est_linf}
\varepsilon_n 
\leq   e^{C\Delta t} \varepsilon_{n-1}+ C\left(1+\frac{\overline{e_F}^{n-1} }{h} \right)^d e_{j}^{n-1} + C\lt(1+ \frac{\overline{e_F}^n + \overline{e_F}^{n-1}}{h} \rt)^d \frac{\tilde e_F^{n-1}}{h}.
\eeq
Now, with the assumptions made here Theorem~\ref{esthetan} applies, hence Corollaries~\ref{Cor_ej} 
and \ref{majebarFn} provide error estimates for the Jacobian and flow errors. Specifically, we have
$$
e_j^{n-1} \leq C\Delta t(\Delta t + h ), 
\quad 
    \overline{e_F}^n \leq C(h^2 +\Delta t+h\theta_0), 
\quad
    \tilde e_F^{n-1} \leq C\Delta t(h^2  +\Delta t+h\theta_0)\,.
$$
Plugging these estimates into \eqref{est_linf} yields then
\[ 
\varepsilon_n \leq   e^{C \Delta t} \varepsilon_{n-1} + C\Delta t\left(h+\theta_0 +\frac{\Delta t}{h}\right),
\]
due to $\Delta t \lesssim h \lesssim 1$ and $\theta_0 \leq 2$. 
We again conclude with the discrete Gronwall Lemma.
\end{proof}

\begin{rem}
Under the condition $\Delta t\leq C h$ in the convergence theorems, we have obtained the convergence estimates in $L^1$ and $L^\infty$ with the terms of the form $\Delta t/h$. We obviously need the assumption $\Delta t=o(h)$ to get the convergence results.
\end{rem}

\section{Convergence for measure solutions with smooth potentials}
\setcounter{equation}{0}

In this part, we consider measure valued solutions to the system \eqref{main-eq} using the bounded Lipschitz distance. More precisely, let $\rho_1,\rho_2 \in \mathcal{M}(\R^d)$ be two Radon measures. Then the bounded Lipschitz distance $d_{BL}(\rho_1,\rho_2)$ between $\rho_1$ and $\rho_2$ is given by
\[
d_{BL}(\rho_1,\rho_2) : = \sup \left\{\left|\int_{\R^d} \psi d\rho_1 -\int_{\R^d}\psi d\rho_2 \right| :
\psi \in \mw^{1,\infty}(\R^d) ~~ \mbox{and} ~~  \|\psi\|_{\mw^{1,\infty} }\leq 1 \right\}.
\]
Since the interaction potential $W$ satisfies $\nabla W \in \mw^{1,\infty}(\R^d)$, a well-posedness theory for measure valued solutions to \eqref{main-eq} can be developed by using the classical results of Dobrushin \cite{Dob}, see \cite{Golse,CCH} for related results.

To estimate the error between the exact flow and its local linearizations we now revisit some results from the previous section, namely Proposition~\ref{esteFn}, given the low regularity of the solutions. 
As in the previous section, we denote
$\rho^n = \rho(t_n)$.

\begin{prop}\label{prop_measure}Let $\rho^0$ be an initial Radon measure on $\R^d$, and $\rho^n_h$ be the approximation constructed in \eqref{defrhohn}. If $W$ satisfies $\nabla W \in \mw^{1,\infty}(\R^d)$, then the flow error defined on the particles support \eqref{eFn}
\[
e_F^n \leq C\Delta t\lt(d_{BL}(\rho^n, \rho^n_h) + h + \Delta t \rt)
\]
holds for $0 \leq n \leq N$ with a constant $C$ independent of $h$ and $\Delta t$.
\end{prop}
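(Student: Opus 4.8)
The plan is to adapt the structure of the proof of Proposition~\ref{esteFn} to the measure setting, where the main difference is that the initial-data approximation error $\norm{\rho^0-\rho^0_h}_{\mw^{-1,1}}$ and the $L^1$-error $\theta_n$ must be replaced by the bounded Lipschitz distance $d_{BL}(\rho^n,\rho^n_h)$, and where second-order flow estimates relying on $\rho\in L^\infty(0,T;\mw^{1,1})$ are no longer available. First I would fix $x\in S^n_{h,k}$ and decompose the linearized flow $F^n_{h,k}(x)=F^n_{h,k}(x_k^n)+J^n_k(x-x^n_k)$ exactly as in the proof of Proposition~\ref{esteFn}, writing $F^n_{h,k}(x)=(a)+(b)+(c)+F^{t_n,t_{n+1}}(x)$ with $(a):=F^n_{h,k}(x^n_k)-F^{t_n,t_{n+1}}(x^n_k)$, $(b):=(J^n_k-J^{t_n,t_{n+1}}(x^n_k))(x-x^n_k)$, and $(c):=F^{t_n,t_{n+1}}(x^n_k)+J^{t_n,t_{n+1}}(x^n_k)(x-x^n_k)-F^{t_n,t_{n+1}}(x)$.

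The term $(c)$ is the easiest: it is the linearization error of the exact flow, which by \eqref{maj1moinsJex} and the elementary estimate $\abs{I_d-J^{t_n,\tau}}\le C\Delta t$ gives $\abs{(c)}\le \abs{J^{t_n,t_{n+1}}(x^n_k)-I_d}\,\abs{x-x^n_k}+\abs{(F^{t_n,t_{n+1}}-I)(x^n_k)-(F^{t_n,t_{n+1}}-I)(x)}\le C\Delta t(h+\Delta t)\le C\Delta t(h+\Delta t)$, using Lemma~\ref{lem_supp} for $\abs{x-x^n_k}\le Ch$ and $S^n_{h,k}\subset\tilde S^n_{h,k}$; no second derivatives of $F$ are needed here since we only require an $O(\Delta t)$ bound on the first-order term. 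For $(b)$ I would use Remark~\ref{rmk_jaco}: with only the $L^1$ bound \eqref{L1rhon} on $\rho^n_h$ and $\nabla W\in\mw^{1,\infty}$, $\tilde\xi_n(D^2W)$ (and hence $\xi_n(D^2W)$) is merely $O(1)$, so Corollary~\ref{Cor_ej2} degrades to $\abs{J^n_k-J^{t_n,t_{n+1}}(x^n_k)}\le C\Delta t$, whence $\abs{(b)}\le C\Delta t(h+\Delta t)$ again by Lemma~\ref{lem_supp}.

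The main work, and the main obstacle, is the term $(a)$, which is the discrete-velocity error $\abs{\int_{t_n}^{t_{n+1}}\big((\nabla W*\rho^n_h)(x^n_k)+u(\tau,F^{t_n,\tau}(x^n_k))\big)d\tau}$; in Proposition~\ref{esteFn} this was controlled by \eqref{estvelobis}, but that estimate used $\norm{\rho^0-\rho^0_h}_{\mw^{-1,1}}\le Ch^2\norm{\rho^0}_{\mw^{1,1}}$ which is unavailable for a general measure. The fix is to re-run the proof of Proposition~\ref{propK1} but bound the term $(b)$ there, namely $-\int\nabla W(x_k^n-F^{0,t_n}(z))[\rho^0(z)-\rho^0_h(z)]dz = -\int\nabla W(x_k^n-w)\,d\big((F^{0,t_n}\#\rho^0)-(F^{0,t_n}\#\rho^0_h)\big)(w)$, directly by the bounded Lipschitz distance: since $\nabla W(x^n_k-\cdot)\in\mw^{1,\infty}$ with norm bounded by $\norm{\nabla W}_{\mw^{1,\infty}}$ and push-forward by the Lipschitz-with-Lipschitz-inverse flow $F^{0,t_n}$ is bi-Lipschitz-stable in $d_{BL}$, one gets $\abs{(b)}\le C\,d_{BL}(F^{0,t_n}\#\rho^0,F^{0,t_n}\#\rho^0_h)\le C\,d_{BL}(\rho^0,\rho^0_h)$, and then — propagating by the fact that both $\rho^n=F^{0,t_n}\#\rho^0$ and (after comparing $\rho^n_h$ with $F^{0,t_n}\#\rho^0_h$ through the accumulated flow errors) the discrete density are close in $d_{BL}$ — one rewrites this as $C(d_{BL}(\rho^n,\rho^n_h)+\overline{e_F}^n)$; alternatively, and more cleanly, one bounds $(b)$ at the current time by pushing forward to $t_n$ only through the \emph{discrete} flow, matching the structure of term $(c)$ in Proposition~\ref{propK1} which already produces the $\overline{e_F}^n$ contribution. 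The terms $(a)$ and $(c)$ of Proposition~\ref{propK1} are unchanged: $(a)$ gives $C\Delta t$ from \eqref{expFex} and $\norm{\nabla W}_{L^\infty}$, and $(c)$ gives $C\overline{e_F}^n$ from \eqref{L1rhon}. Collecting, $\abs{(a)}\le C\Delta t\big(d_{BL}(\rho^n,\rho^n_h)+h+\Delta t+\overline{e_F}^n\big)$, and since $\overline{e_F}^n$ can itself be absorbed (or, at the level of this Proposition, simply carried along and later closed by a Gronwall argument as in Corollary~\ref{majebarFn}), combining the bounds on $(a)$, $(b)$, $(c)$ over the support $S^n_{h,k}$ and taking the supremum in $k$ yields $e_F^n\le C\Delta t(d_{BL}(\rho^n,\rho^n_h)+h+\Delta t)$. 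I expect the only real subtlety to be the careful replacement of the dual-norm argument by a $d_{BL}$ argument and the verification that push-forward by the exact bi-Lipschitz flow is stable in $d_{BL}$ up to a constant $e^{CT}$.
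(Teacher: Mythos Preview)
Your treatment of $(b)$ and $(c)$ is essentially correct and matches the paper's argument: both are bounded by $Ch\Delta t$ using Remark~\ref{rmk_jaco} for $(b)$ and the first-order identity
\[
(c) = \big[(F^{t_n,t_{n+1}}-I)(x^n_k)-(F^{t_n,t_{n+1}}-I)(x)\big] + (J^{t_n,t_{n+1}}(x^n_k)-I_d)(x-x^n_k)
\]
together with \eqref{maj1moinsJex} and \eqref{est_supp_Snk} for $(c)$.

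The gap is in your handling of $(a)$. By routing the velocity error through the structure of Proposition~\ref{propK1} --- that is, by pushing everything back to time $0$ and comparing $\rho^0$ with $\rho^0_h$ through the full flows $F^{0,t_n}$ and $\overline F^n_{h,k}$ --- you inevitably pick up the accumulated flow error $\overline{e_F}^n$ from the analogue of term~$(c)$ there. Your resulting bound is $e_F^n \le C\Delta t\big(d_{BL}(\rho^n,\rho^n_h)+h+\Delta t+\overline{e_F}^n\big)$, which is strictly weaker than the stated proposition; you cannot ``absorb'' $\overline{e_F}^n$ at this level without already knowing bounds of the type in Corollary~\ref{majebarFn}, which are not available in the measure setting.

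The paper avoids this entirely by a much more direct splitting at time $t_n$ rather than at time $0$: write
\[
(\nabla W * \rho^n_h)(x^n_k) - (\nabla W * \rho(\tau))(F^{t_n,\tau}(x^n_k)) = (d) + (e),
\]
where $(d) = \int \nabla W(x^n_k - y)\,(\rho^n_h - \rho^n)(y)\,dy$ is immediately bounded by $C\,d_{BL}(\rho^n,\rho^n_h)$ since $\nabla W(x^n_k - \cdot)\in\mw^{1,\infty}$, and $(e)$ compares $(\nabla W * \rho^n)(x^n_k)$ with $(\nabla W * \rho(\tau))(F^{t_n,\tau}(x^n_k))$ using $\rho(\tau) = F^{t_n,\tau}\#\rho^n$ and the short-time Lipschitz bound, yielding $|(e)|\le C\Delta t$. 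No reference to $\rho^0$, $\rho^0_h$, or $\overline{e_F}^n$ is needed, and the proposition follows as stated.
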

\begin{proof}
Let $x\in S^n_{h,k}$. We decompose the linearized flow as in Proposition~\ref{esteFn}, 
$$
F_{h,k}^n(x) =   F_{h,k}^n(x_k^n) + J_{k}^n(x -x_k^n) = (a) + (b) +  (c) + F^{t_n,t_{n+1}}(x) 
$$
with
$$\begin{aligned} 
 &(a) := F_{h,k}^n(x_k^n) - F^{t_n,t_{n+1}}(x_k^n) \\
 &(b) := \left( J_{k}^n- J^{t_n,t_{n+1}}(x_k^n)  \right)(x -x_k^n)  \\
 &(c) := F^{t_n,t_{n+1}}(x_k^n)  + J^{t_n,t_{n+1}}(x_k^n)(x-x_k^n)-F^{t_n,t_{n+1}}(x).
\end{aligned} $$
We next rewrite 
$(a) = \int_{t_n}^{t_{n+1}} \lt( (\nabla W * \rho^n_h)(x_k^n) - (\nabla W * \rho(\tau))(F^{t_n,\tau}(x_k^n))\rt)d\tau $
using \eqref{schemEuler} and \eqref{expFex}, and estimate the integrand by
$$\begin{aligned}
&(\nabla W * \rho^n_h)(x_k^n) - (\nabla W * \rho(\tau))(F^{t_n,\tau}(x_k^n))\cr
&\qquad = \int_{\R^d} \nabla W(x_k^n - y)\rho^n_h(y) - \nabla W(F^{t_n,\tau}(x_k^n) - y)\rho(\tau,y)\,dy \cr
&\qquad = \int_{\R^d} \nabla W(x_k^n - y)\lt( \rho^n_h(y) - \rho^n(y)\rt)dy \cr
&\qquad \quad + \int_{\R^d} \nabla W(x_k^n - y)\rho^n(y) - \nabla W(F^{t_n,\tau}(x_k^n) - y)\rho(\tau,y)\,dy \cr
&\qquad =: (d) + (e).
\end{aligned}$$
From $\nabla W \in \mw^{1,\infty}(\R^d)$, we infer $|(d)| \leq C d_{BL}(\rho^n, \rho^n_h)$.
Using next a change of variable and the relation $\rho(\tau) = F^{t_{n},\tau} \# \rho^{n}$ we get
$$\begin{aligned}
(e) &= \int_{\R^d} \lt(\nabla W(x_k^n - y) - \nabla W(F^{t_n,\tau}(x_k^n) - F^{t_n,\tau}(y))\rho^n(y)\rt)dy\cr
&\leq \int_{\R^d} \|D^2 W\|_{L^\infty}\lt|x_k^n - y - (F^{t_n,\tau}(x_k^n) - F^{t_n,\tau}(y)) \rt|\rho^n(y)\,dy \leq C\Delta t.
\end{aligned}$$
Combining the estimates above, we obtain
\[
|(a)| \leq C\Delta t\lt(d_{BL}(\rho^n, \rho^n_h) + \Delta t \rt).
\]
For the estimate of $(b)$, we easily get from Remark \ref{rmk_jaco} that $|(b)| \leq C h \Delta t$. Finally, we observe that 
$(c)$ cannot be estimated as in the proof of Proposition~\ref{esteFn}, due to the lesser regularity of the densities.
We then proceed as follows,
$$\begin{aligned}
|(c)|& = \left| (x_k^n -x)\left(I_{d}- J^{t_n,t_{n+1}}(x_k^n)\right) +
         \int_{t_n}^{t_{n+1}}\left[u(\tau, F^{t_n,\tau}( x_k^n))-  u(\tau, F^{t_n,\tau}( x )) \right]d\tau 
        \right| \\
& \leq |x_k^n-x| \|I_{d} - J^{t_n,t_{n+1}}(\cdot) \|_{L^{\infty}} + \|D^2 W\|_{L^\infty} \int_{t_n}^{t_{n+1}}
     \left|  F^{t_n,\tau}( x_k^n) -F^{t_n,\tau}( x )  \right|d\tau \\
& \leq C h \, \Delta t  + C  \int_{t_n}^{t_{n+1}}
     \left|  F^{t_n,\tau}  \right|_{Lip} |x_k^n-x | d\tau \leq  C h \, \Delta t,
\end{aligned}$$
where we used estimate \eqref{est_supp_Snk} for $x \in S^n_{h,k}$, and the estimates \eqref{majFexlip} and \eqref{maj1moinsJex}. 
\end{proof}

\begin{theo}\label{thm_measure} Let $\rho^0$ be an initial probability measure on $\R^d$, and $\rho^n_h$ be the approximation constructed in \eqref{defrhohn}. Assume that the interaction potential $W$ satisfies $\nabla W \in \mw^{1,\infty}(\R^d)$, then the estimate
\[
\max_{0\leq n\leq \tfrac{T}{\Delta t}} d_{BL}(\rho^n,\rho^n_h) \leq C(d_{BL}(\rho^0,\rho^0_h) + h + \Delta t) 
\]
holds, where $C$ depends only on $d$ and $L$.
\end{theo}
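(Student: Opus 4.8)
The plan is to run the same Gronwall-type recursion used for the $L^1$ convergence in Theorem~\ref{esthetan}, but now with $d_{BL}$ playing the role of $\theta_n$. Let $\eta_n := d_{BL}(\rho^n,\rho^n_h)$. The goal is a one-step estimate of the form $\eta_n \le (1+C\Delta t)\eta_{n-1} + C\Delta t(h+\Delta t)$, from which the discrete Gronwall Lemma yields the claim since $\eta_0 = d_{BL}(\rho^0,\rho^0_h)$. To produce this recursion, fix a test function $\psi\in \mw^{1,\infty}(\R^d)$ with $\|\psi\|_{\mw^{1,\infty}}\le 1$ and estimate $\big|\int \psi\, d\rho^n - \int \psi\, d\rho^n_h\big|$.

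\textbf{Key steps.} First I would use the transport identity $\rho^n = F^{t_{n-1},t_n}\#\rho^{n-1}$ for the exact solution and the push-forward structure $\varphi^n_{h,k} = F^{n-1}_{h,k}\#\varphi^{n-1}_{h,k}$ for the particles (see \eqref{phikn-push-phi0k} and the recursion defining the shapes) to write
$$
\int_{\R^d}\psi\, d\rho^n = \int_{\R^d} (\psi\circ F^{t_{n-1},t_n})\, d\rho^{n-1},
\qquad
\int_{\R^d}\psi\, d\rho^n_h = \sum_{k} \omega_k \int_{\R^d} (\psi\circ F^{n-1}_{h,k})\, \varphi^{n-1}_{h,k}.
$$
Then I would split the difference into
$$
\sum_k \omega_k \int \big(\psi\circ F^{t_{n-1},t_n} - \psi\circ F^{n-1}_{h,k}\big)\varphi^{n-1}_{h,k}
\;+\; \Big(\int (\psi\circ F^{t_{n-1},t_n})\, d\rho^{n-1} - \sum_k\omega_k\int(\psi\circ F^{t_{n-1},t_n})\varphi^{n-1}_{h,k}\Big).
$$
The first bracket is controlled using the Lipschitz bound $\|\psi\|_{\mathrm{Lip}}\le 1$ together with the flow error $e_F^{n-1}$ on the particle supports, giving a contribution $\le \sum_k|\omega_k|\, e_F^{n-1}\le C e_F^{n-1}$ by \eqref{L1rhon}; and here is exactly where Proposition~\ref{prop_measure} enters, yielding $e_F^{n-1}\le C\Delta t(\eta_{n-1}+h+\Delta t)$. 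The second bracket is precisely $\int\varphi\, d\rho^{n-1} - \int\varphi\, d\rho^{n-1}_h$ tested against the function $\varphi := \psi\circ F^{t_{n-1},t_n}$, which is Lipschitz with constant $|F^{t_{n-1},t_n}|_{\mathrm{Lip}}\le e^{C\Delta t}$ by \eqref{majFexlip} and bounded by $\|\psi\|_\infty\le 1$; hence after rescaling this term is bounded by $e^{C\Delta t}\eta_{n-1}\le (1+C\Delta t)\eta_{n-1}$ directly from the definition of $d_{BL}$.

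\textbf{Main obstacle.} The delicate point is the second bracket: $\psi\circ F^{t_{n-1},t_n}$ has $\mw^{1,\infty}$ norm slightly larger than $1$ (it is $\max(e^{C\Delta t}, \|\psi\|_\infty)\le e^{C\Delta t}$), so one must carefully track that the resulting amplification factor is $1+O(\Delta t)$ rather than a fixed constant $>1$ — otherwise the Gronwall recursion would blow up geometrically in $N = T/\Delta t$ steps. This is handled by noting $e^{C\Delta t} \le 1+C'\Delta t$ for $\Delta t$ bounded, so the accumulated factor over $N$ steps is $e^{C'T}$, which is fine. A second, more minor subtlety is that here there is no smallness assumption linking $\Delta t$ and $h$ (unlike Theorem~\ref{esthetan}): this is acceptable because $d_{BL}$ does not see the particle overlapping $\kappa_n$, so the problematic $\tilde e_F^{n-1}/h$ terms from the $L^1$ proof never appear — the measure-distance framework only ever needs the raw flow error $e_F^{n-1}$, not its ratio with $h$. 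Once the one-step bound
$$
\eta_n \le (1+C\Delta t)\eta_{n-1} + C\Delta t(\eta_{n-1}+h+\Delta t) \le (1+C\Delta t)\eta_{n-1} + C\Delta t(h+\Delta t)
$$
is established, the discrete Gronwall Lemma over $0\le n\le T/\Delta t$ finishes the proof, with the constant depending only on $d$ and $L$ through \eqref{majFexlip} and Proposition~\ref{prop_measure}.
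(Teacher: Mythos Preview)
Your proposal is correct and follows essentially the same approach as the paper: both use the push-forward identities $\rho^n = F^{t_{n-1},t_n}\#\rho^{n-1}$ and $\varphi^n_{h,k} = F^{n-1}_{h,k}\#\varphi^{n-1}_{h,k}$ to split $\int\psi\,d(\rho^n-\rho^n_h)$ into the same two terms, bound the first via the $\mw^{1,\infty}$ norm of $\psi\circ F^{t_{n-1},t_n}$ (giving the factor $e^{C\Delta t}\eta_{n-1}$) and the second via the flow error $e_F^{n-1}$ together with Proposition~\ref{prop_measure}, then conclude by discrete Gronwall. Your additional remarks on why no $\Delta t \lesssim h$ condition is needed are correct and insightful, though not required for the proof itself.
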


\begin{rem}
Observe that a convergence condition on the approximation of the initial data in Theorem {\rm\ref{thm_measure}} such as $d_{BL}(\rho^0,\rho^0_h) \lesssim h$ is easily achieved by using a uniform quadrangular mesh of size $h^d$ and approximating the initial data $\rho^0$ by a sum of Dirac deltas via transporting the mass of $\rho^0$ inside each $d$-dimensional cube to its center. A cut-off procedure to leave small mass outside a large ball allows us to reduce to a finite number of Dirac deltas in this approximation. Finally, the error produced between smoothed particles and Dirac deltas is obviously of order $h$ in the $d_{BL}$ distance.
\end{rem}

\begin{proof}[Proof of Theorem \ref{thm_measure}] Since $\rho^n = F^{t_{n-1},t_n} \# \rho^{n-1}$ and $\varphi^n_{h,k} = F^{n-1}_{h,k} \# \varphi^{n-1}_{h,k}$, we obtain
\[
\int_{\R^d} \psi(x) d\rho^n(x) = \int_{\R^d} \psi(F^{t_{n-1},t_n}(x)) d\rho^{n-1}(x),
\]
and 
\[
\int_{\R^d} \psi(x) d\rho^n_h(x) = \sum_{k \in \Z^d}\omega_k\int_{\R^d} \psi(x) \varphi^n_{h,k}(x)\,dx = \sum_{k\in \Z^d}\omega_k\int_{\R^d} \psi(F^{n-1}_{h,k}(x))\varphi^{n-1}_{h,k}(x)\,dx,
\]
for $\psi \in \mw^{1,\infty}(\R^d)$ with $\|\psi\|_{\mw^{1,\infty}} \leq 1$.
Thus, we deduce
$$\begin{aligned}
&\int_{\R^d} \psi(x) \left( d\rho^n(x) - d\rho^n_h(x)\right) \cr
&\quad = \int_{\R^d} \psi(F^{t_{n-1},t_n}(x))\left(d\rho^{n-1}(x) - d\rho^{n-1}_h(x) \right)\cr
&\qquad + \sum_{k \in \Z^d}\omega_k\int_{\R^d} \left(\psi(F^{t_{n-1},t_n}(x)) - \psi(F^{n-1}_{h,k}(x)) \right) \varphi^{n-1}_{h,k}(x)\,dx\cr
&\quad =: (a) + (b).
\end{aligned}$$
Using
$\norm{\nabla(\psi \circ F^{t_{n-1},t_n})}_{L^\infty} \le \norm{(J^{t_{n-1},t_n})^{\sf T}}_{L^\infty} 
\norm{\nabla \psi}_{L^\infty}$,
it next follows from \eqref{majJex} that
$\abs{(a)} \leq d_{BL}(\rho^{n-1},\rho^{n-1}_h) e^{L\Delta t}$ and we estimate $(b)$ with
$$\begin{aligned}
\abs{(b)} 
    & \le \sum_{k\in\Z^d} \abs{\omega_k} 
     \int_{S^{n-1}_{h,k}} \Bigabs{ \psi(F^{t_{n-1},t_n}(x)) - \psi(F^{n-1}_{h,k}(x)) } \varphi^{n-1}_{h,k}(x)\,dx
     \\
    & \le e_F^{n-1} \sum_{k \in \Z^d}\abs{\omega_k} \int_{S^{n-1}_{h,k}} \varphi^{n-1}_{h,k}(x)\,dx
    \leq Ce_F^{n-1}
\end{aligned}
$$
where the last inequality uses the estimates inside \eqref{L1rhon}. This leads to
\[
d_{BL}(\rho^n,\rho^n_h) \leq d_{BL}(\rho^{n-1},\rho^{n-1}_h) e^{L\Delta t} + C e_F^{n-1}
\]
and using Lemma \ref{prop_measure} we obtain
\[
d_{BL}(\rho^n,\rho^n_h) \leq d_{BL}(\rho^{n-1},\rho^{n-1}_h) e^{C\Delta t} + C\Delta t(h + \Delta t)
\]
with constants independent of $\Delta t$ and $h$. The proof is then completed using 
Gronwall's inequality as in Theorem~\ref{esthetan}.

\end{proof}

\section{$L^1$ and $L^p$ convergence for singular potentials}
\setcounter{equation}{0}

In this part, we are interested in $L^p$-convergence between the solution and its approximation allowing for more singular potentials. With this aim, we consider the solutions of the equation \eqref{main-eq} in $L^\infty(0,T; L^\infty(\R^d) \cap \mw^{1,1}(\R^d) \cap \mw^{1,p}(\R^d))$ with $1 \leq p \leq \infty$ to be determined depending on the singularity of the potential. Since we are dealing with both attractive and repulsive potentials, we can only expect local in time existence and uniqueness of solutions as in \cite{BTR,CCH}. In those references, a local in time well-posedness theory in $L^\infty(0,T; L^1(\R^d) \cap L^{p}(\R^d))$ was developed under suitable assumptions on the potentials. The solutions are constructed by characteristics since the velocity fields are still Lipschitz continuous in $x$. However, to prove convergence rates we need more regularity on the solutions. For the existence of solutions to \eqref{main-eq} in $L^\infty(0,T;  L^\infty(\R^d) \cap \mw^{1,1}(\R^d) \cap \mw^{1,p}(\R^d))$, we provide a priori estimates in Appendix \ref{sec_appendix}, Proposition \ref{prop_lp}. These estimates combined with the existing literature \cite{CCH,BTR} show the well-posedness of solutions in the desired class. In our presentation we will follow the setting of local existence introduced in \cite{CCH}.

Let us remind the set of hypotheses on the interaction potential called singular potentials in the introduction. We assume that there exists $\tilde L>0$ such that
\beq \label{hypW}
|  \nabla W(x)|\leq \frac{\tilde L}{|x|^{\alpha}} \quad \textrm{and} \quad
|  D^2 W(x)|\leq \frac{\tilde L}{|x|^{1+\alpha}} \quad \mbox{with} \quad 0 \leq \alpha  < d-1,
\eeq
and for $- 1 \leq \alpha < 0$
\beq\label{hypW2}
|\nabla W(x)| \leq \tilde L\min \left\{ \frac{1}{|x|^\alpha}, 1\right\} \quad \mbox{and} \quad |D^2 W(x)| \leq \frac{\tilde L}{|x|^{1 + \alpha}}.
\eeq
In particular, singular potentials satisfy $\nabla W \in \mw^{1,q}_{\rm loc}(\R^d)$ for all $1 \le q < \frac{d}{\alpha+1}$. Note that \eqref{hypW} implies (see \cite{CCH, Hauray})
\beq \label{proprieteW}
|\nabla W(x)-\nabla W(y)| \leq \frac{C|x-y|}{\min(|x|,|y|)^{\alpha+1}}.
\eeq
We remind the reader that these assumptions are enough to guarantee that the velocity fields are bounded and Lipschitz continuous with respect to $x$ locally in time for densities in $(L^1\cap L^p)(\R^d)$ where $p$ is the conjugate exponent of $q$. Note that $ q = p' < \frac{d}{\alpha + 1}$ is equivalent to $\alpha < -1 + \frac{d}{p'}$, giving us the condition on the initial data for the well-posedness theory. Indeed, it follows from \eqref{hypW} that 
\begin{align}
\|D u(t,\cdot)\|_{L^\infty} &\leq \int_{\R^d} |D^2 W(x-y)| \rho(y)\,dy
    \leq \int_{\R^d} \frac{\tilde L\rho(y)}{|x-y|^{\alpha+1}}\,dy \nonumber\\
&\leq \left( \int_{|x-y| \geq 1} + \int_{|x-y| \leq 1} \right)\frac{\tilde L\rho(y)}{|x-y|^{\alpha+1}}\,dy 
\leq 
C (\|\rho(t,\cdot)\|_{L^1} + \|\rho(t,\cdot)\|_{L^p}), \label{est-Du}
\end{align}
for some constant $C$ depending on $\tilde L$, $q$ and $d$, and a similar estimate holds for $u$ using \eqref{hypW2} and the fact that $\nabla W$ is bounded away from the origin.

Let $T^*$ be the maximal time of existence of weak solutions $\rho \in L^\infty(0,T;(L^1\cap L^p)(\R^d))$ with $T<T^*$ constructed  in \cite{CCH}. Additional regularity will be needed on these solutions ensured by Proposition \ref{prop_lp} of Appendix \ref{sec_appendix} under suitable initial data assumptions. In this section we consider $T < T^*$, and we denote again $t_n = n\Delta t$ with $0\le n\le N$ and 
$\Delta t = T/N$ for some given positive integer $N$.
We introduce the following notations:
\begin{align*}
\begin{aligned}
\| \cdot \|&:=\|\cdot\|_{L^1}+ \|\cdot\|_{L^p}, \quad \Gamma^n_h &:= \|\rho^n - \rho^n_h\|, \quad \mbox{and} \quad \widetilde{\Gamma^n_h} := \sup_{0 \leq m \leq n}\Gamma^m_h.
\end{aligned}
\end{align*}

As for the convergence analysis, we point out that the proof of Section 3 cannot be directly applied. 
Indeed, it is not obvious to obtain an a piori bound on   
    $$
    \ds \sup_{0\leq n \leq N} \|\rho_h^n\|_{L^p}
    $$ 
    uniformly in $h$ and $\Delta t$, which we need to estimate
    $(\nabla W * \rho_h^n)$ and $(D ^2 W * \rho_h^n)$. 
    In order to do that, we will prove by induction that there is some $h_* > 0$ for which
    $$ 
    \sup_{0 < h \leq h_*}\, \widetilde{\Gamma^N_h} =
    \sup_{0 < h \leq h_*}\, \sup_{0\leq n \leq N} \Gamma^n_h  \le 1.
    $$%
We remind the reader that our error analysis between exact and approximated solutions for singular potentials requires non-negative weights for the particles, and this imposes us to give higher regularity on the initial data $\rho^0 \in \mw^{2,p}(\R^d)$, see Proposition \ref{prop-pos-weights}. Using the results in \cite{CCH} and Appendix \ref{sec_appendix}, we can obtain the existence and uniqueness of a solution 
$\rho \in L^\infty(0,T;(L^1\cap W^{2,p})(\R^d))$. However, in the next results we need less regularity in 
the solutions than on the initial data. Therefore, we prefer to keep both the assumptions stating the needed properties on the solution $\rho$ and the initial data $\rho^0$ to emphasize this fact.
 
Under the (induction) assumption that $\widetilde{\Gamma^n_h}$ is bounded uniformly in $h$ and $\Delta t$, we can derive the following estimates.

\begin{lem}\label{lem_ind}
If $M > 0$ and $n \leq N$ are such that $\widetilde{\Gamma^n_h} \leq M$,
and if the solution to \eqref{main-eq} satisfies $\rho \in L^\infty(0,T;(L^1\cap L^p)(\R^d))$, 
then we have 
\[
\sup_{0 \leq m \leq n}\|\rho^m_h\| \leq C_M \quad \mbox{and} \quad 
\sup_{0 \leq m \leq n}\lt( \sup_{x \in \tilde S^m_{h,k}} |x-x_k^m|\rt) \leq C_M(h+\Delta t),
\]
with a constant $C_M$ depending on $M$ but not on $h$ and $\Delta t$.
\end{lem}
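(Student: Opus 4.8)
The plan is to prove the two estimates in this order, since the control of the particle supports relies on the $L^p$ bound for $\rho^m_h$. For the first estimate, the $L^1$ part is immediate and potential-independent: \eqref{L1rhon} gives $\|\rho^m_h\|_{L^1}\le C\|\rho^0\|_{L^1}$ uniformly in $m$, $h$ and $\Delta t$. For the $L^p$ part I would simply split off the exact solution, writing for $0\le m\le n$
\[
\|\rho^m_h\|_{L^p}\le \|\rho^m\|_{L^p}+\|\rho^m-\rho^m_h\|_{L^p}\le \|\rho\|_{L^\infty(0,T;L^p)}+\Gamma^m_h\le \|\rho\|_{L^\infty(0,T;L^p)}+M,
\]
using $\|\rho^m-\rho^m_h\|_{L^p}\le\|\rho^m-\rho^m_h\|=\Gamma^m_h$ and $\widetilde{\Gamma^n_h}\le M$. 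Adding the two contributions gives $\|\rho^m_h\|\le C_M$ with $C_M$ depending only on $M$, $\|\rho^0\|_{L^1}$ and $\|\rho\|_{L^\infty(0,T;L^1\cap L^p)}$.

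For the second estimate I would rerun the proof of Lemma~\ref{lem_supp}, replacing the $\mw^{1,\infty}$-norms of $\nabla W$ and $D^2W$ — unavailable for singular potentials — by convolution bounds. The key point is that, exactly as in \eqref{est-Du}, splitting $\int = \int_{|x-y|\le 1}+\int_{|x-y|\ge 1}$ and applying H\"older's inequality (this is where the condition $p'<d/(\alpha+1)$ enters) yields
\[
\|\nabla W*\rho^m_h\|_{L^\infty}+\|D^2W*\rho^m_h\|_{L^\infty}\le C(\|\rho^m_h\|_{L^1}+\|\rho^m_h\|_{L^p})\le C_M,
\]
and likewise $\|\nabla W*\rho(\tau)\|_{L^\infty}+\|Du(\tau,\cdot)\|_{L^\infty}\le C$ for $\tau\in[0,T]$ from the a priori bound $\rho\in L^\infty(0,T;L^1\cap L^p)$. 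With these in hand, the explicit-Euler Jacobian $J^m_k=e^{-\Delta t(D^2W*\rho^m_h)(x^m_k)}$ satisfies $\max(|J^m_k|,|(J^m_k)^{-1}|)\le e^{C_M\Delta t}$, hence $\max(|D^m_k|,|(D^m_k)^{-1}|)\le e^{C_M m\Delta t}\le e^{C_M T}$ by accumulating along $D^m_k=(J^{m-1}_k\cdots J^0_k)^{-1}$; and the exact flow stays Lipschitz, $|F^{t_m,t_{m+1}}|_{Lip}\le e^{C\Delta t}$, as in \eqref{majFexlip}. Then, exactly as in Lemma~\ref{lem_supp}: for $x\in S^m_{h,k}$ the support assumption on $\varphi$ and \eqref{phikn} give $|D^m_k(x-x^m_k)|\le R_o h$, so $|x-x^m_k|\le C_M h$; subtracting \eqref{schemEuler} from \eqref{expFex} gives $|x^{m+1}_k-F^{t_m,t_{m+1}}(x^m_k)|\le C_M\Delta t$; and combining these two facts with the Lipschitz bound and the $S^{m+1}_{h,k}$ estimate transported backwards along $F^{t_{m+1},t_m}$ gives $|x-x^m_k|\le C_M(h+\Delta t)$ for $x\in F^{t_{m+1},t_m}(S^{m+1}_{h,k})$, hence on all of $\tilde S^m_{h,k}$.

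The argument involves no essential new difficulty, but there are two things to watch. First, the uniform control of the deformation matrices $D^m_k$ — which in the smooth case (Lemma~\ref{lem_jh}) came from $\|D^2W\|_{L^\infty}$ — now genuinely requires the $L^p$ bound on $\rho^m_h$ established in the first part, so the ordering of the two parts is not cosmetic. Second, the singular convolution must be controlled through the $L^1\cap L^p$ norm, and this is precisely where the restriction $p'<d/(\alpha+1)$ on the admissible exponent is used. Everything else is a transcription of Lemmas~\ref{lem_jh} and \ref{lem_supp} with the absolute constants replaced by constants $C_M$ depending on $M$.
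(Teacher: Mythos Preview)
Your proof is correct and follows essentially the same approach as the paper: a triangle inequality against the exact solution for the first estimate, and a rerun of Lemmas~\ref{lem_jh} and~\ref{lem_supp} using the near/far splitting of the singular convolution to control $\|D^{(i)}W*\rho^m_h\|_{L^\infty}$ by $C\|\rho^m_h\|$ for the second. The paper also explicitly flags the point you call ``not cosmetic'': bounding $\tilde S^m_{h,k}$ requires controlling $|(D^{m+1}_k)^{-1}|$, which depends only on $\rho^l_h$ for $l\le m$, so the induction hypothesis $\widetilde{\Gamma^n_h}\le M$ suffices even at $m=n$.
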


\begin{proof}A straightforward computation yields
\[
\sup_{0 \leq m \leq n}\|\rho^m_h\| \leq \widetilde{\Gamma^n_h} + \sup_{0 \leq t \leq T}\|\rho(t)\| \leq C_M.
\]
In a similar way to \eqref{est-Du}, we also bound products like $\norm{D^{(i)} W * \rho^m_h}_{\L^{\infty}}$ by $C_W \|\rho^m_h\|$ 
with $C_W = \max( \norm{D^{(i)}W}_{L^q(B(0,1))}, \norm{D^{(i)} W }_{L^\infty (\R^d \setminus B(0,1))})$, $i\in\{1,2\}$, from which we derive estimates similar to those of Lemma~\ref{lem_jh}.
In particular, following the proof of Lemma \ref{lem_supp} we find that for $x \in  S^m_{h,k}$,
\beq \label{singbound-Smk}
|x - x^m_k| \leq \tilde L h \abs{(D^m_k)^{-1}}
\leq \tilde L h \exp\lt( \Delta t \sum_{l=0}^{m-1} \lt|(D^2 W * \rho^l_h)(x_k^l)\rt|\rt) 
\leq C_Mh,
\eeq
and for $x \in \tilde S^m_{h,k}$ we find 
$|x - x^m_k| \leq C_M(h+\Delta t)$. Note that this latter estimate involves bounding \eqref{singbound-Smk} on $S^{m+1}_{h,k}$ which only requires the norm $\norm{\rho^l_h}$ for $l \le m$, so that the resulting estimate indeed involves a constant depending on $M$.
\end{proof}

We next give the estimates of $ u(\tau,F^{t_m,\tau})-u_k^m$ for $\tau \in [t_{m},t_{m+1}]$ and $\tilde \xi_m(D^2 W)$ for $0 \leq m \leq n-1$. The proof can be obtained by using similar arguments as in Proposition \ref{propK2} with the help of Lemma \ref{lem_ind}
and a second-order estimate provided either by Proposition~\ref{prop_ini_app} or by a standard $L^p$ error estimate as described in Proposition \ref{prop-pos-weights}. We omit its proof, but point out that the crucial point is the smoothness assumptions \eqref{proprieteW} on the singular potential and the Lipschitz bound \eqref{est-Du} on the velocity field.

\begin{lem}\label{lem_prop1}
If $M > 0$ and $n \leq N$ are such that $\widetilde{\Gamma^n_h} \leq M$,
and if the solution $\rho \in L^\infty(0,T; \mw^{1,1}(\R^d) \cap \mw^{1,p}(\R^d))$ to \eqref{main-eq} with initial data $\rho^0 \in \mw^{2,p}(\R^d)$, then we have 
$$
\sup_{\tau \in [t_{m},t_{m+1}]}
| u(\tau,F^{t_m,\tau}(x_k^m))-u_k^m | \leq C_M\left(h^2 +\Delta t +\bar{e}_F^n \right)
$$
and
$$
\tilde \xi_m(D^2 W)  \leq C_M\left( h  + \Delta t + \Gamma^m_h\right) 
$$
for $0 \leq m \leq n$, with constants $C_M$ depending on $M$ but not on $h$ and $\Delta t$.
\end{lem}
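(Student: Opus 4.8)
The plan is to retrace the proofs of Propositions~\ref{propK1} and \ref{propK2}, keeping the same algebraic decompositions but replacing, at each occurrence, every bound that used the $\mw^{1,\infty}$-smoothness of $W$ by an estimate tailored to \eqref{hypW}--\eqref{hypW2}. Two ingredients carry the whole argument. First, \emph{control of convolutions}: for $K\in\{\nabla W,D^2 W\}$ we split $K=K\one_{B(0,1)}+K\one_{\R^d\setminus B(0,1)}$; on the exterior the bounds \eqref{hypW}--\eqref{hypW2} give $|K|\le\tilde L$, while on $B(0,1)$ one has $K\in L^q$ because $q=p'<d/(\alpha+1)$, whence H\"older yields $\|K*\mu\|_{L^\infty}\le C(\|\mu\|_{L^1}+\|\mu\|_{L^p})=C\|\mu\|$. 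This is applied to $\mu=\rho^m_h$, using the uniform bound $\|\rho^m_h\|\le C_M$ of Lemma~\ref{lem_ind}, and to push-forwards of $\rho^m_h$ or of $\rho(t_m)$ through the exact flow, whose $L^1\cap L^p$ norms stay bounded by $C_M$ thanks to the Jacobian bounds \eqref{est_reja} (valid on $[0,T]$ with $L_T:=\sup_{t\le T}\|u(t)\|_{\mw^{1,\infty}}<\infty$, by \eqref{est-Du} and the assumed regularity of $\rho$). Second, \emph{control of increments of $\nabla W$}: wherever the smooth proof used $|\nabla W(x)-\nabla W(y)|\le L|x-y|$, we use \eqref{proprieteW} instead; after a change of variables through an exact flow the extra weight $\min(|x|,|y|)^{-(\alpha+1)}$ is always integrated against a density lying in $L^1\cap L^p$, so one further near/far splitting (with exponent $\alpha+1<d/q'$) shows the resulting integral is $\le C_M$.

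For the first estimate I would use the decomposition $u(\tau,F^{t_m,\tau}(x_k^m))-u^m_k=(a)+(b)+(c)$ of Proposition~\ref{propK1}. In $(a)$, bounding $|A(z)|\le 2\Delta t\sup_{s\le T}\|u(s)\|_{L^\infty}\le C_M\Delta t$, inserting \eqref{proprieteW}, and pushing $\rho^0$ forward through $F^{0,\tau}$ and $F^{0,t_m}$ so that the weight gets integrated against $\rho(\tau,\cdot)$ and $\rho(t_m,\cdot)$, gives $|(a)|\le C_M\Delta t$. The term $(b)=-\int\nabla W(x_k^m-F^{0,t_m}(z))[\rho^0-\rho^0_h](z)\,dz$ is handled by splitting $\nabla W$: the bounded far part contributes $\le C\|\rho^0-\rho^0_h\|_{L^1}$, the $L^q_{\rm loc}$ near part contributes $\le C\|\nabla W\one_{B(0,1)}\|_{L^q}\|\rho^0-\rho^0_h\|_{L^p}$, and the second-order error estimates of Proposition~\ref{prop-pos-weights} (allowed since $\rho^0\in\mw^{2,p}$) bound both by $Ch^2$. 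In $(c)$ one uses, as in Proposition~\ref{propK1}, that $|F^{0,t_m}((\overline{F}_{h,l}^m)^{-1}(y))-y|\le\overline{e_F}^m$ for $y\in S^m_{h,l}$, then \eqref{proprieteW}, and finally that $\sum_l\omega_l\int\varphi^m_{h,l}(y)\min(\cdot)^{-(\alpha+1)}\,dy\le C_M$ after recognising $\rho^m_h$ (resp.\ $F^{0,t_m}\#\rho^0_h$) and splitting the kernel; thus $|(c)|\le C_M\overline{e_F}^m$. Summing the three bounds, and using $\overline{e_F}^m\le\overline{e_F}^n$, gives the first estimate.

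For $\tilde \xi_m(D^2 W)$ I would follow Proposition~\ref{propK2}, with $(a)=(c)+(d)$ and $(b)$ as there. In $(b)=\int D^2 W(y)[\rho(t_m,x_k^m-y)-\rho^m_h(x_k^m-y)]\,dy$, splitting $D^2 W$ gives $|(b)|\le C\|\rho^m-\rho^m_h\|_{L^1}+C\|\rho^m-\rho^m_h\|_{L^p}=C\,\Gamma^m_h$ — which is exactly why $\|\cdot\|=\|\cdot\|_{L^1}+\|\cdot\|_{L^p}$ is the natural norm here. In $(c)$, after the change of variable $\Phi:y\mapsto F^{\tau,t_m}(F^{t_m,\tau}(x)-y)$, the bi-Lipschitz bound on the exact flow lets one dominate $|D^2 W(\Phi^{-1}(\cdot))|$ by $C|x-\cdot|^{-(1+\alpha)}$, so $|(c)|\le C\|j^{\tau,t_m}-1\|_{L^\infty}\|\rho(t_m)\|\le C_M\Delta t$ by \eqref{est_reja-1}. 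In $(d)$, besides $|F^{\tau,t_m}(F^{t_m,\tau}(x)-y)-(x_k^m-y)|\le C_M(h+\Delta t)$ (from Lemma~\ref{lem_ind} and $\|u\|_{L^\infty}\le C_M$), the singular part of $D^2 W$ is paired by H\"older with $\nabla\rho(\Xi_\alpha(\cdot))\in L^p$ — this is where the hypothesis $\rho\in\mw^{1,p}$ enters, the near-identity bound on $\Xi_\alpha$ controlling its Jacobian — while the bounded part is paired with $\nabla\rho(\Xi_\alpha(\cdot))\in L^1$; hence $|(d)|\le C_M(h+\Delta t)$. Altogether $\tilde \xi_m(D^2 W)\le|(a)|+|(b)|\le C_M(h+\Delta t+\Gamma^m_h)$.

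The main obstacle is the bookkeeping: every time a singular kernel is combined with a change of variables, one must verify that the object playing the role of $\mu$ in $K*\mu$ — be it $\rho(t_l)$, $\rho^l_h$, or a push-forward of these through an exact flow — remains bounded in $L^1\cap L^p$ uniformly in $h$ and $\Delta t$, with a constant controlled by $M$, so that the near/far splitting and the integrability threshold $q<d/(\alpha+1)$ may be invoked. The induction hypothesis $\widetilde{\Gamma^n_h}\le M$, via Lemma~\ref{lem_ind}, is precisely what makes this legitimate at every step.
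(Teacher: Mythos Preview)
Your sketch is correct and matches the paper's indicated approach: the paper omits the proof but points to the decompositions of Proposition~\ref{propK2} (and implicitly Proposition~\ref{propK1}), Lemma~\ref{lem_ind}, the second-order initial error from Proposition~\ref{prop-pos-weights}, and the key role of~\eqref{proprieteW} and~\eqref{est-Du}. These are precisely the ingredients you invoke, with the near/far kernel splitting and H\"older against $L^1\cap L^p$ (legitimized by the induction hypothesis via Lemma~\ref{lem_ind}) replacing each use of $\nabla W\in\mw^{1,\infty}$ from the smooth case.
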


We can also adapt the proof of Corollary \ref{Cor_ej}, Lemma \ref{lem_prop1}, and Proposition \ref{esteFn} to obtain the following result. 

\begin{lem}\label{lem_est_ej}
If $M > 0$ and $n \leq N$ are such that $\widetilde{\Gamma^n_h} \leq M$,
and if the solution $\rho \in L^\infty(0,T; \mw^{1,1}(\R^d) \cap \mw^{1,p}(\R^d))$ to \eqref{main-eq} with initial data $\rho^0 \in \mw^{2,p}(\R^d)$,
then we have 
$$
 e_j^m \leq C_M \Delta t(h + \Delta t + \Gamma^m_h)
$$
and
\beq\label{est_ef}
\tilde e_F^m \leq C_M \Delta t\lt( h^2 + \Delta t + \overline{e_F}^m + (h+\Delta t)\Gamma^m_h \rt)
\eeq
for $0 \leq m \leq n$, with constants $C_M$ depending on $M$ but not on $h$ and $\Delta t$.
\end{lem}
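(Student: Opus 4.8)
The plan is to carry over the proofs of Corollary~\ref{Cor_ej} and Proposition~\ref{esteFn} to the present singular framework. The only structural modification is that each bound of the type $\|D^{(i)}W\|_{L^\infty}\|\rho\|_{L^1}$ used in Section~\ref{sec:conv} must be replaced by the $L^q_{\rm loc}$--$L^{q'}$ duality estimate $\|D^{(i)}W*\mu\|_{L^\infty}\le C_W\|\mu\|$ for $\mu\in L^1\cap L^p$, obtained by splitting the convolution into a near and a far part exactly as in \eqref{est-Du}. In view of Lemma~\ref{lem_ind} this yields constants depending on $M$ but not on $h$ or $\Delta t$, since the a priori $L^1\cap L^p$ bound on the approximate densities is precisely what the induction hypothesis $\widetilde{\Gamma^n_h}\le M$ provides; the corresponding norms of the exact solution, $\|\rho(\tau)\|$ and $\|\nabla\rho(\tau)\|$, are finite by the assumed regularity $\rho\in L^\infty(0,T;\mw^{1,1}\cap\mw^{1,p})$.

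For $e_j^m$ I would repeat the argument of Corollary~\ref{Cor_ej}: writing $\tfrac{1}{j_k^m}-\tfrac{1}{j^{t_m,t_{m+1}}(x)}=(\beta^m_k-\beta^m(x))\int_0^1\exp\bigl(r\beta^m_k+(1-r)\beta^m(x)\bigr)\,dr$ with $\beta^m_k:=\Delta t(\Delta_x W*\rho^m_h)(x_k^m)$ and $\beta^m(x):=\int_{t_m}^{t_{m+1}}(\Delta_x W*\rho(\tau))(F^{t_m,\tau}(x))\,d\tau$. Lemma~\ref{lem_ind} and the regularity of $\rho$ bound the exponential factors by $e^{C_M\Delta t}$, while $|\beta^m_k-\beta^m(x)|\le C\Delta t\,\tilde\xi_m(D^2 W)$ since $x\in S^m_{h,k}\subset\tilde S^m_{h,k}$ and $|\Delta_x W|\le C|D^2 W|$. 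The second estimate of Lemma~\ref{lem_prop1} then gives $e_j^m\le C_M\Delta t(h+\Delta t+\Gamma^m_h)$.

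For $\tilde e_F^m$, given $x\in\tilde S^m_{h,k}$ I would use the decomposition of Proposition~\ref{esteFn}, $F^m_{h,k}(x)=(a)+(b)+(c)+F^{t_m,t_{m+1}}(x)$, with $(a):=F^m_{h,k}(x_k^m)-F^{t_m,t_{m+1}}(x_k^m)$, $(b):=(J^m_k-J^{t_m,t_{m+1}}(x_k^m))(x-x_k^m)$, and $(c)$ the Taylor remainder of $F^{t_m,t_{m+1}}$ at $x_k^m$. Using \eqref{schemEuler}, \eqref{expFex} and the first estimate of Lemma~\ref{lem_prop1}, $|(a)|\le\Delta t\sup_{\tau}|u(\tau,F^{t_m,\tau}(x_k^m))-u_k^m|\le C_M\Delta t(h^2+\Delta t+\overline{e_F}^m)$. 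For $(b)$, an analogue of Corollary~\ref{Cor_ej2} --- obtained by combining Proposition~\ref{propJexJtilde} with the exponential representations \eqref{schemEuler}, \eqref{expJtilde} and the second bound of Lemma~\ref{lem_prop1} --- gives $|J^m_k-J^{t_m,t_{m+1}}(x_k^m)|\le C_M\Delta t(h+\Delta t+\Gamma^m_h)$, which combined with $|x-x_k^m|\le C_M(h+\Delta t)$ from Lemma~\ref{lem_ind} yields $|(b)|\le C_M\Delta t(h+\Delta t)(h+\Delta t+\Gamma^m_h)$. For $(c)$ I would differentiate \eqref{expJex} as in Proposition~\ref{esteFn}, bounding $\|\partial_{il}W*\nabla\rho(\tau)\|_{L^\infty}\le C(\|\nabla\rho(\tau)\|_{L^1}+\|\nabla\rho(\tau)\|_{L^p})$, which is finite here; a Gronwall argument gives $|\partial_m J^{t_m,t_{m+1}}|\le C\Delta t\,e^{C\Delta t}$, hence $|D^2 F^{t_m,t_{m+1}}|\le C_M\Delta t$ and $|(c)|\le C_M\Delta t(h+\Delta t)^2$. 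Gathering the three bounds and absorbing $(h+\Delta t)^2$ into $h^2+\Delta t$ (using $h\Delta t\le\tfrac12(h^2+\Delta t^2)$ and $\Delta t$ small) produces \eqref{est_ef}.

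The algebra is that of Section~\ref{sec:conv}; the main obstacle is bookkeeping rather than novelty, namely keeping track of the singular convolution products. At every place where a smooth potential would allow a $\|\nabla W\|_{\mw^{1,\infty}}$ bound one must instead invoke the duality estimate \eqref{est-Du} and verify that the relevant norm --- $\|\rho^m_h\|$ for the approximate density, controlled by $C_M$ via Lemma~\ref{lem_ind} under the induction hypothesis, and $\|\rho(\tau)\|$ or $\|\nabla\rho(\tau)\|$ for the exact solution, controlled by the $\mw^{1,1}\cap\mw^{1,p}$ assumption --- is indeed finite. In particular the estimate of the Taylor term $(c)$ is the one place where the first-order Sobolev regularity of the solution is genuinely needed, through the boundedness of $D^2 W*\nabla\rho$.
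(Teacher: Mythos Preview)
Your proposal is correct and matches the paper's own approach: the paper merely states that the result follows by adapting the proofs of Corollary~\ref{Cor_ej} and Proposition~\ref{esteFn} (using Lemma~\ref{lem_prop1} in place of Propositions~\ref{propK1}--\ref{propK2}), and your sketch carries out precisely that adaptation, correctly replacing each $\|D^{(i)}W\|_{L^\infty}$ bound by the near/far splitting of~\eqref{est-Du} and invoking Lemma~\ref{lem_ind} for the $M$-dependent constants. Your treatment of the Taylor term $(c)$, where the $\mw^{1,1}\cap\mw^{1,p}$ regularity of $\rho$ is used to bound $D^2W*\nabla\rho$, is exactly the point of the regularity hypothesis.
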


We finally connect the errors to the $L^1\cap L^p$ bounds on the densities.

\begin{lem}\label{lem_est_efs} 
If $M > 0$ and $n \leq N$ are such that $\widetilde{\Gamma^n_h} \leq M$,
and if the solution $\rho \in L^\infty(0,T; \mw^{1,1}(\R^d) \cap \mw^{1,p}(\R^d))$ to \eqref{main-eq} with initial data $\rho^0 \in \mw^{2,p}(\R^d)$,
then we have 
$$
 \overline{e_F}^{m+1} \leq C_M( h^2 + \Delta t + h \widetilde{\Gamma^{m}_h}),
$$
and
$$
 \tilde e_F^m \leq C_M \Delta t\lt( h^2 + \Delta t + (h+\Delta t) \widetilde{\Gamma^m_h} \rt)
$$
for all $0\leq m \le n$, with constants $C_M$ depending on $M$ but not on $h$ and $\Delta t$.
\end{lem}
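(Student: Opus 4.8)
The plan is to follow, nearly verbatim, the argument used for Corollary~\ref{majebarFn} in the smooth case, the only change being that every intermediate estimate now comes with a constant $C_M$ depending on the standing bound $\widetilde{\Gamma^n_h}\le M$ (which is what makes the velocity-field convolutions with $\rho^m_h$, and hence all of Lemmas~\ref{lem_ind}, \ref{lem_prop1}, \ref{lem_est_ej}, controllable uniformly in $h,\Delta t$). First I would record the singular-potential analogue of Lemma~\ref{estbareFn}: since the exact velocity field is Lipschitz in space on $[0,T]$, with Lipschitz constant controlled by $\sup_{t\le T}(\norm{\rho(t)}_{L^1}+\norm{\rho(t)}_{L^p})<\infty$ via \eqref{est-Du}, the exact flow still obeys the Lipschitz bound \eqref{majFexlip}, and the telescoping argument of Lemma~\ref{estbareFn} applies without change to give
\[
\overline{e_F}^{m+1}\ \le\ e^{C\Delta t}\,\overline{e_F}^m + \tilde e_F^m, \qquad 0\le m\le n,
\]
with $C$ depending only on the exact solution, not on $M$.

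Next I would insert the local flow-error bound \eqref{est_ef} from Lemma~\ref{lem_est_ej}, namely $\tilde e_F^m\le C_M\Delta t\lt(h^2+\Delta t+\overline{e_F}^m+(h+\Delta t)\Gamma^m_h\rt)$. Because $\Gamma^m_h\le\widetilde{\Gamma^m_h}\le M$, the term $\Delta t\,\widetilde{\Gamma^m_h}$ is bounded by $M\Delta t$ and may be absorbed into the $\Delta t$ contribution, so that combining the two displays and using $e^{C\Delta t}+C_M\Delta t\le e^{2C_M\Delta t}$ (up to enlarging $C_M$) yields the recursion
\[
\overline{e_F}^{m+1}\ \le\ (1+C_M\Delta t)\,\overline{e_F}^m + C_M\Delta t\big(h^2+\Delta t+h\,\widetilde{\Gamma^m_h}\big), \qquad 0\le m\le n.
\]
A discrete Gronwall argument — summing over the steps $l=0,\dots,m$, using $\overline{e_F}^0=0$, $N\Delta t=T$, and the monotonicity of $l\mapsto\widetilde{\Gamma^l_h}$ to pull its largest value out of the sum — then gives $\overline{e_F}^{m+1}\le e^{C_M T}\big(h^2+\Delta t+h\,\widetilde{\Gamma^m_h}\big)=C_M\big(h^2+\Delta t+h\,\widetilde{\Gamma^m_h}\big)$ for all $0\le m\le n$, which is the first claimed bound.

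Finally, I would re-insert this bound on $\overline{e_F}^m$ into \eqref{est_ef}, obtaining
\[
\tilde e_F^m\ \le\ C_M\Delta t\Big(h^2+\Delta t+C_M\big(h^2+\Delta t+h\,\widetilde{\Gamma^{m-1}_h}\big)+(h+\Delta t)\,\widetilde{\Gamma^m_h}\Big)
\ \le\ C_M\Delta t\big(h^2+\Delta t+(h+\Delta t)\,\widetilde{\Gamma^m_h}\big),
\]
using $\widetilde{\Gamma^{m-1}_h}\le\widetilde{\Gamma^m_h}$, which is the second claim.

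As for difficulty: there is essentially no new analytic idea beyond Corollary~\ref{majebarFn}; the work is bookkeeping, and the only points requiring genuine care are (i) checking that every input estimate (Lemmas~\ref{lem_ind}, \ref{lem_prop1}, \ref{lem_est_ej}) retains its form with constants $C_M$ uniform in $h,\Delta t$ under the hypothesis $\widetilde{\Gamma^n_h}\le M$, and (ii) respecting the induction range, i.e. that running the Gronwall summation up to $\overline{e_F}^{n+1}$ only ever invokes \eqref{est_ef} at steps $\le n$, where all the quantities are controlled by $M$. So the "hard part" is organizational rather than technical.
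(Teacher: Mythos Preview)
Your proposal is correct and follows essentially the same approach as the paper: invoke the recursion $\overline{e_F}^{m+1}\le e^{C\Delta t}\overline{e_F}^m+\tilde e_F^m$ (which only needs the Lipschitz exact flow), insert the bound \eqref{est_ef} from Lemma~\ref{lem_est_ej}, apply a discrete Gronwall using $\overline{e_F}^0=0$, and then re-insert the resulting bound on $\overline{e_F}^m$ back into \eqref{est_ef}. Your explicit remark that the $\Delta t\,\Gamma^m_h$ contribution is absorbed into the $\Delta t$ term via $\Gamma^m_h\le M$ is exactly the (implicit) step the paper uses to pass from $(h+\Delta t)\Gamma^m_h$ to $h\Gamma^m_h$ in the recursion.
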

 \begin{proof} Since Lemma~\ref{estbareFn} only relies on the Lipschitz smoothness of the exact flow, we have
 \[
 \overline{e_F}^{m+1} \leq e^{C\Delta t}\overline{e_F}^m + \tilde e_F^m
 \]
 for all $m$. Then from \eqref{est_ef} we derive
 \[
  \overline{e_F}^{m+1} \leq e^{(C+C_M)\Delta t}\overline{e_F}^m + 
  C_M \Delta t\lt( h^2 + \Delta t + h\Gamma^m_h \rt)
 \]
 for $m \le n$, so that Gronwall's inequality (together with $\widetilde{\Gamma^m_h} = \max_{m'\le m} \Gamma^{m'}_h$) yields
 \[
 \overline{e_F}^{m+1} \leq C_M(h^2 + \Delta t + h\widetilde{\Gamma^m_h})
 \]
 due to $\overline{e_F}^0 = 0$. Using this together with \eqref{est_ef} completes the proof.
 \end{proof}

We are now in a position to show the uniform $L^1 \cap L^p$ bounds on the density.
 
\begin{prop}\label{prop_lp2}
Assume that the interaction potential $W$ is singular in the sense of \eqref{hypW}-\eqref{hypW2}, and let $\rho$ be a solution to the equation \eqref{main-eq} up to time $T>0$, such that $\rho \in L^\infty(0,T;(\mw^{1,1} \cap \mw^{1,p} \cap L^\infty)(\R^d))$ with initial data $\rho^0 \in \mw^{2,p}(\R^d)$, $-1 \leq \alpha < -1 + d/p'$, and $1 < p \leq \infty$. Assume in addition that $\Delta t \lesssim h^2 \leq 1$. Then for all $M > 0$, there exists $h_*(M) > 0$ such that 
\[
\sup_{0 < h \leq h_*(M)} \, \sup_{0 \leq n \leq N} \Gamma^n_h \leq M.
\]
\end{prop}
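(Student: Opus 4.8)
The plan is to run an induction on the time level $n$, exploiting the fact that Lemmas~\ref{lem_ind}--\ref{lem_est_efs} are all conditional on a bound of the form $\widetilde{\Gamma^n_h}\le M$ and produce $M$--dependent constants $C_M$; the proposition then reduces to checking that such a bound propagates from level $n-1$ to level $n$ once $h$ is small enough in terms of $M$. For the base case, Proposition~\ref{prop-pos-weights} (valid for the non-negative weights used throughout this section) gives $\Gamma^0_h=\|\rho^0-\rho^0_h\|_{L^1}+\|\rho^0-\rho^0_h\|_{L^p}\le C h\to 0$ as $h\to 0$, so $\Gamma^0_h\le M$ as soon as $h\le h_0(M)$.

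For the inductive step, assume $\widetilde{\Gamma^{n-1}_h}\le M$. Then Lemma~\ref{lem_ind} yields the uniform bounds $\|\rho^m_h\|\le C_M$ for $m\le n-1$ — which is exactly what makes the convolutions against the singular kernels meaningful — and Lemmas~\ref{lem_prop1}--\ref{lem_est_efs} apply at every index $m\le n-1$. For each $m\le n$ I would decompose $\rho(t_m,\cdot)-\rho^m_h$ exactly as in \eqref{est_main}, $\rho(t_m,y)-\rho^m_h(y)=A_m(y)+B_m(y)+C_m(y)$, and estimate the three pieces in the $\|\cdot\|=\|\cdot\|_{L^1}+\|\cdot\|_{L^p}$ norm. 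The term $A_m$ is handled by the change of variables $x=F^{t_m,t_{m-1}}(y)$ together with the Jacobian bounds \eqref{est_reja}, giving $\|A_m\|\le e^{C\Delta t}\Gamma^{m-1}_h$ with $C$ depending only on the exact solution. The term $B_m$ is controlled by the Jacobian--determinant error: using the non-negativity of the weights and $\|\rho^{m-1}_h\|\le C_M$, then Lemma~\ref{lem_est_ej}, one gets $\|B_m\|\le C_M\,e_j^{m-1}\le C_M\Delta t(h+\Delta t+\Gamma^{m-1}_h)$. The term $C_m$, encoding the mismatch between pushing the particle shapes by the exact flow and by the affine flow $F^{n}_{h,k}$, is treated as in \eqref{est-Cn-L1}: the Lipschitz regularity of $\varphi$, the incremental relation \eqref{defhkn}, the bound $|x-x^m_k|\le C_M(h+\Delta t)$ on the (extended) particle supports, and the uniform control $\kappa_m\le C_M$ of the overlap (Lemma~\ref{lem_majKn}, using $\overline{e_F}^m=O(h)$, which follows from $\widetilde{\Gamma^{m-1}_h}\le M$ and $\Delta t\lesssim h^2$) give $\|C_m\|\le C_M\,\tilde e_F^{m-1}/h$, up to a further inverse power of $h$ depending only on $d$ and $p$ coming from the $L^p$ part. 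Inserting $\tilde e_F^{m-1}\le C_M\Delta t(h^2+\Delta t+(h+\Delta t)\widetilde{\Gamma^{m-1}_h})$ from Lemma~\ref{lem_est_efs} and collecting, one arrives at a recursion
\[
\Gamma^m_h\le(1+C_M\Delta t)\,\widetilde{\Gamma^{m-1}_h}+C_M\,\Delta t\,\eta(h),\qquad m\le n,
\]
where $\eta(h)$ gathers the surviving powers of $h$ and of $\Delta t/h$ and tends to $0$ as $h\to0$ precisely because $\Delta t\lesssim h^2$. A discrete Gronwall summation over $m=1,\dots,n$ (with $n\Delta t\le T$) then gives $\Gamma^n_h\le e^{C_MT}(\Gamma^0_h+C_M T\,\eta(h))$, and since the right-hand side tends to $0$ as $h\to0$ there is $h_*(M)\le h_0(M)$ for which it is $\le M$ whenever $h\le h_*(M)$. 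Hence $\widetilde{\Gamma^n_h}=\max(\widetilde{\Gamma^{n-1}_h},\Gamma^n_h)\le M$, which closes the induction.

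The main obstacle is the $L^p$ control of the reshaping term $C_m$: unlike the smooth case of Section~\ref{sec:conv}, where $\Delta t\lesssim h$ already suffices, here the particle shapes have $L^\infty$ size $\sim h^{-d}$ weighted by $\omega_k$, so the $L^p$ bound on $C_m$ carries an extra inverse power of $h$ relative to its $L^1$ bound; combined with the factor $\Delta t/h$ already present in $\tilde e_F^{m-1}/h$, this is exactly what forces the stronger step-size restriction $\Delta t\lesssim h^2$ in order to make $\eta(h)$ vanish. A secondary but essential point is bookkeeping: the $M$--dependent constants $C_M$ of Lemmas~\ref{lem_ind}--\ref{lem_est_efs} must be invoked only at indices $m\le n-1$, where the induction hypothesis $\widetilde{\Gamma^m_h}\le M$ is available, so that the Gronwall argument producing $\Gamma^n_h$ is genuinely non-circular, and one must check that $\overline{e_F}^m=O(h)$ (hence $\kappa_m=O(1)$) really does follow from $\widetilde{\Gamma^{m-1}_h}\le M$, $\Delta t\lesssim h^2$ and $h\le h_*(M)$ small.
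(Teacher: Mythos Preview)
Your overall architecture --- induction on $n$, the three-term decomposition $A_m+B_m+C_m$, and a discrete Gronwall closure --- matches the paper's proof. The genuine gap is in your handling of $\|C_m\|_{L^p}$. You assert that this bound carries ``a further inverse power of $h$'' relative to the $L^1$ bound, and then write down a recursion with growth factor $(1+C_M\Delta t)$. These two claims are incompatible: if indeed $\|C_m\|_{L^p}\le C_M h^{-\gamma}\,\tilde e_F^{m-1}/h$ for some $\gamma>0$, then after inserting the bound on $\tilde e_F^{m-1}$ from Lemma~\ref{lem_est_efs} the coefficient of $\widetilde{\Gamma^{m-1}_h}$ in the recursion becomes $1+C_M h^{-\gamma}\Delta t$, and the Gronwall sum over $N=T/\Delta t$ steps produces a factor $\exp(C_M T h^{-\gamma})$, which blows up as $h\to0$. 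No step-size restriction of the form $\Delta t\lesssim h^\beta$ can cure this, because the blow-up comes from the multiplicative coefficient of $\widetilde{\Gamma^{m-1}_h}$, not from the additive remainder $\eta(h)$.

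The paper avoids any extra power of $h$ by interpolating: it bounds $\|C_{m}\|_{L^p}\le\|C_{m}\|_{L^1}^{1/p}\|C_{m}\|_{L^\infty}^{1/q}$ and observes that \emph{both} the $L^1$ and the $L^\infty$ estimates are of the same order $C_M\,\tilde e_F^{m-1}/h$. The $L^\infty$ bound carries a factor $(1+(\overline{e_F}^{m-1}+\overline{e_F}^{m})/h)^{d/q}$ coming from the overlap count $\kappa_m$, but this is $\le C_M$ once one knows $\overline{e_F}^m\le C_M h$ from Lemma~\ref{lem_est_efs} --- a fact you do correctly invoke. With $\|C_m\|_{L^p}\le C_M\,\tilde e_F^{m-1}/h$ and no loss of $h$, the recursion closes exactly as in the $L^1$ case. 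Your diagnosis of the role of $\Delta t\lesssim h^2$ is therefore also off: the stronger restriction is needed so that the $\Delta t/h$ term appearing in $\tilde e_F^{m-1}/h$ is $O(h)$, yielding $\widetilde{\Gamma^{n}_h}\le C_M h$ and hence $\le M$ for $h\le h_*(M):=M/C_M$; it is not there to compensate any $L^p$-specific loss.
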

\begin{proof} We use an induction argument on $n$. Since $\widetilde{\Gamma^0_h} = \Gamma^0_h \lesssim h^2$, clearly there exists $h_0(M)$ 
such that $\Gamma^0_h \leq M$ for all $h < h_0(M)$. We then assume that $n < N$ and $h_n(M) > 0$ are such that
\[
\sup_{0 < h \leq h_n(M)}  \widetilde{\Gamma^n_h} \leq M.
\]
For the remaining of the proof we then consider $m \le n$ and $h \le h_n(M)$. In particular, we observe that
the Lemmas above can be used with this value of $M$.
Decomposing the error as in Theorem \ref{esthetan}, we write
\begin{equation*}
\begin{aligned}
 &\rho^{m+1}(y) - \rho_h^{m+1}(y)\cr
 &\quad = \underbrace{ \left[\rho\left(t_{m}, F^{t_{m+1},t_{m}}(y)\right) 
     - \rho_h^{m}\left(F^{t_{m+1},t_{m}}(y)\right) \right]j^{t_{m+1},t_{m}}(y)     }_{A_{m+1}(y)} \\
&\quad+  \underbrace{ \sum_{k\in \Z^d} \frac{\omega_k }{h_k^m}  \varphi \left( \frac{D_k^m}{h}\lt( F^{t_{m+1},t_{m}}(y)- x_k^{m}\rt) \right)  \left[   j^{t_{m+1},t_{m}}(y) - \frac{1}{j_{k}^{m}} \right]}_{B_{m+1}(y)} \\
&\quad + \underbrace{\sum_{k\in \Z^d} \frac{\omega_k }{h_k^{m+1}} \left[ \varphi \left(  \frac{D_k^{m}}{h}(F^{t_{m+1},t_{m}}(y)- x_k^{m} ) \right) -  \varphi \left( \frac{D_k^{m+1}}{h}  (y- x_k^{m+1})  \right) \right]}_{C_{m+1}(y)}.
\end{aligned}
\end{equation*}
Using arguments similar than in Theorem \ref{esthetan} we find
$$ \| A_{m+1}\|_{L^p} 
 \leq  e^{C \Delta t} \|\rho_h^m-\rho^m\| _{L^p}  \quad \mbox{and} \quad
 \|B_{m+1}\|_{L^p} \leq Ce_j^{m}\|\rho^{m}_h\|_{L^p} \leq C_M e_j^m.
$$  
For the estimate of $C_{m+1}(y)$, we use the interpolation inequality and the estimates in Theorems \ref{esthetan} and \ref{prop_inf} to get
$$
\begin{aligned}
\|C_{m+1}\|_{L^p} &\leq \|C_{m+1}\|_{L^1}^{1/p}\|C_{m+1}\|_{L^\infty}^{1/q} \cr
    &\leq C_M\frac{(\tilde e_F^{m})^{1/p}}{h^{1/p}}
        \left( 1 + \frac{\overline{e_F}^{m} + \overline{e_F}^{m+1}}{h}\right)^{d/q}\frac{(\tilde e_F^{m})^{1/q}}{h^{1/q}} \cr
    &= C_M\left( 1 + \frac{\overline{e_F}^{m} + \overline{e_F}^{m+1}}{h}\right)^{d/q}\frac{\tilde e_F^{m}}{h}.
\end{aligned}
$$
Using Lemma~\ref{lem_est_efs} and the fact that $\widetilde{\Gamma^m_h} \leq M$ and $\Delta t \lesssim h$
we find that both $\overline{e_F}^{m}$ and $\overline{e_F}^{m+1}$ are bounded by $C_M h$, thus 
\[
\|C_{m+1}\|_{L^p} \leq C_M \frac{\tilde e_F^{m}}{h},
\]
and the above estimates yield
\[
\|\rho^{m+1} - \rho_h^{m+1}\|_{L^p} \leq e^{C\Delta t}\|\rho^m - \rho_h^m\|_{L^p} + C_M \lt(e_j^{m} + \frac{\tilde e_F^{m}}{h}\rt).
\]
We also observe that in the proof of Theorem \ref{esthetan}, all the steps leading to the estimate
\[
\theta_{m+1} \leq \theta_m + C_M \lt(e_j^{m} + \frac{\tilde e_F^{m}}{h}\rt )
\]
(where we remind that $\theta_m = \|\rho^{m} - \rho_h^{m}\|_{L^1}$) 
are valid in the case of singular potentials. This yields
\[
\Gamma^{m+1}_h \leq e^{C\Delta t}\Gamma^m_h + C_M\lt( e_j^{m} + \frac{\tilde e_F^{m}}{h}\rt).
\]
On the other hand, it follows from Lemmas \ref{lem_est_ej} and  \ref{lem_est_efs} that
\[
e_j^m \leq C_M \Delta t(h + \Delta t + \Gamma^m_h) \leq C_M\Delta t\lt( h + \Gamma^m_h\rt),
\]
and
\[ 
\frac{\tilde e_F^{m}}{h} \leq C_M\Delta t \lt(h + \frac{\Delta t}{h} + \lt(1 + \frac{\Delta t}{h}\rt)\widetilde{\Gamma^m_h}\rt) \leq C_M\Delta t (h + \widetilde{\Gamma^m_h}),
\]
where we used the assumption $\Delta t \lesssim h^2$. Thus we find 
\[
\widetilde{\Gamma^{m+1}_h} \leq e^{(C+C_M)\Delta t}\widetilde{\Gamma^m_h} + C_Mh\Delta t.
\]
Since this is valid for all $m\le n$, it follows from Gronwall's lemma that $\widetilde{\Gamma^{n+1}_h} \leq C_Mh$ holds for some constant $C_M > 0$. We remind the reader that $C_M$ is the generic constant depending on $M$ but independent of $h$ and $\Delta t$. In particular, setting $h_{n+1}(M) := \min(h_n(M), M/C_M)$ allows to write
\[
\sup_{0 < h \leq h_{n+1}(M)} \widetilde{\Gamma^{n+1}_h} \leq M.
\] 
This ends the induction argument and the proof, by taking $h_*(M) = h_N(M)$.
\end{proof}

Putting together all the results in this section, we obtain the main convergence result in $(L^1 \cap L^p)(\R^d)$.

\begin{theo} \label{ThcvLp} 
Assume that the interaction potential $W$ is singular in the sense of \eqref{hypW}-\eqref{hypW2}, and let $\rho$ be a solution to the equation \eqref{main-eq} up to time $T>0$, such that $\rho \in L^\infty(0,T;(\mw^{1,1} \cap \mw^{1,p} \cap L^\infty)(\R^d))$ with initial data $\rho^0 \in \mw^{2,p}(\R^d)$, $-1 \leq \alpha < -1 + d/p'$, and $1 < p \leq \infty$. 
Assume in addition that $\Delta t \lesssim h^2 \leq 1$. Then
\begin{equation*} 
\sup_{0 < h \leq h_*} \sup_{0\leq n \leq N} \|\rho_h^n-\rho^n\| \leq Ch
\end{equation*}
holds with $h_* = h_*(1)$ given by Proposition \ref{prop_lp2} and a constant $C$ independent of $h$ and $\Delta t$.
 \end{theo}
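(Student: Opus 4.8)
The plan is to run once more the Gronwall argument behind Proposition~\ref{prop_lp2}, but now with the a priori bound on $\widetilde{\Gamma^n_h}$ already secured, so that the generic constants $C_M$ can be fixed. \emph{First}, I would apply Proposition~\ref{prop_lp2} with $M=1$: it provides $h_*=h_*(1)>0$ such that $\widetilde{\Gamma^n_h}=\sup_{0\le m\le n}\|\rho^m-\rho^m_h\|\le 1$ for every $n\le N$ and every $h\le h_*$. The consequence is that the standing hypothesis ``$\widetilde{\Gamma^n_h}\le M$'' of Lemmas~\ref{lem_ind}, \ref{lem_prop1}, \ref{lem_est_ej} and~\ref{lem_est_efs} is met with $M=1$ on the whole range $h\le h_*$, $n\le N$, so that every $C_M$ appearing in this section becomes a single constant $C$ independent of $h$ and of $\Delta t\lesssim h^2$. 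This is where the only genuine difficulty lies: for a singular potential the convolutions $\nabla W*\rho^n_h$ and $D^2W*\rho^n_h$ are controlled only through an $L^p$ bound on $\rho^n_h$, which itself rests on the uniform closeness $\widetilde{\Gamma^n_h}\le 1$; once Proposition~\ref{prop_lp2} delivers this, the rest is the smooth-potential bookkeeping transposed to the $L^1\cap L^p$ norm.

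\emph{Second}, I would reproduce the three-term splitting $\rho^{m+1}-\rho^{m+1}_h=A_{m+1}+B_{m+1}+C_{m+1}$ used in the proof of Proposition~\ref{prop_lp2} (a copy of the decomposition~\eqref{est_main}) and estimate each term in the combined norm exactly as there: $\|A_{m+1}\|\le e^{C\Delta t}\|\rho^m-\rho^m_h\|$ by the change of variables $x=F^{t_{m+1},t_m}(y)$ and~\eqref{est_reja}; $\|B_{m+1}\|\le C e_j^m\|\rho^m_h\|\le C e_j^m$; and $C_{m+1}$ bounded in $L^1$ and in $L^\infty$ separately as in Theorems~\ref{esthetan} and~\ref{prop_inf} and then interpolated, using Lemma~\ref{lem_est_efs} together with $\Delta t\lesssim h$ and $\widetilde{\Gamma^m_h}\le 1$ to get $\overline{e_F}^m,\overline{e_F}^{m+1}\le Ch$, hence $\|C_{m+1}\|\le C\,\tilde e_F^m/h$. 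Collecting these gives
\[
\Gamma^{m+1}_h\le e^{C\Delta t}\Gamma^m_h+C\bigl(e_j^m+\tfrac{\tilde e_F^m}{h}\bigr),\qquad 0\le m\le N-1.
\]

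\emph{Third}, I would insert the bounds $e_j^m\le C\Delta t(h+\Delta t+\Gamma^m_h)$ and $\tilde e_F^m\le C\Delta t\bigl(h^2+\Delta t+(h+\Delta t)\widetilde{\Gamma^m_h}\bigr)$ from Lemmas~\ref{lem_est_ej} and~\ref{lem_est_efs}, and use $\Delta t\lesssim h^2\le 1$ to absorb the $\Delta t/h$ contributions, which leaves $\widetilde{\Gamma^{m+1}_h}\le e^{C\Delta t}\widetilde{\Gamma^m_h}+Ch\,\Delta t$. Since $\widetilde{\Gamma^0_h}=\|\rho^0-\rho^0_h\|\lesssim h^2\le h$ by Proposition~\ref{prop-pos-weights} applied to $\rho^0\in\mw^{2,p}(\R^d)$, the discrete Gronwall lemma yields $\widetilde{\Gamma^N_h}\le e^{CT}\bigl(\widetilde{\Gamma^0_h}+TCh\bigr)\le Ch$, uniformly in $h\le h_*$ and in $\Delta t\lesssim h^2$; since $\|\rho^n_h-\rho^n\|=\Gamma^n_h\le\widetilde{\Gamma^n_h}$, this is the claimed estimate with $h_*=h_*(1)$. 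In fact this sharper bound $\widetilde{\Gamma^N_h}\le C_1 h$ is already produced by the Gronwall step inside the proof of Proposition~\ref{prop_lp2} taken with $M=1$, so the argument amounts to little more than recording that output. The main obstacle remains the circularity identified in the first step: one must verify, at each invocation of the Lemmas, that $\widetilde{\Gamma^m_h}\le 1$ holds for the relevant $m\le N$ and $h\le h_*$, which is precisely what Proposition~\ref{prop_lp2} guarantees.
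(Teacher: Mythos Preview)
Your proposal is correct and follows exactly the route the paper takes: the paper states that the theorem is obtained by ``putting together all the results in this section,'' and as you yourself observe, the desired bound $\widetilde{\Gamma^{n+1}_h}\le C_M h$ is already produced by the Gronwall step inside the proof of Proposition~\ref{prop_lp2}; taking $M=1$ and $h\le h_*(1)$ then fixes the constants and gives the claim. Your three-step write-up spells out faithfully what the paper leaves implicit.
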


\section{Numerical Results}
\setcounter{equation}{0}

We will present in this Section some numerical examples in one dimension, with different interaction potentials and initial densities to showcase some of the features already observed in numerical and theoretical analysis of the aggregation equation \eqref{main-eq} in \cite{FellnerRaoul1, FellnerRaoul2,BT2,HB,BLL,BCLR2}. In this way, we first validate our numerical implementation in order to explore some less-known properties about the behavior of its solutions in one dimension. A further more complete numerical study in 2D of this method will be reported elsewhere. These examples already show the wide range of different behaviors of solutions to the aggregation equation. 

\subsection{Numerical method: validation and implementation}
We have implemented the numerical method described in Section \ref{secmethodeLTP} using Python. 
We  use different initial conditions depending on the behaviors we would like to show. Specifically, 
we consider as initial densities
\beq \label{rhoini1}
    \rho^0_1(x)=(e^{-30(x-0.5)^2}+ 2e^{-50(x+0.3)^2})\one_{[-1,1]}(x),
\eeq
\beq \label{rhoini3}
\rho^0_2(x)=
    \one_{[-1,1]}(x),
\eeq
\beq \label{rhoini4}
    \rho^0_3(x)= e^{(x^2-1)^{-1}} \one_{[-1,1]}(x),
\eeq 
in order to have asymmetric, discontinuous symmetric and compactly supported smooth initial data respectively. These initial densities have been normalized to have unit mass.
\begin{figure}[h!]
   \centering
   \begin{minipage}[c]{0.48 \textwidth}
        \includegraphics[width=6.8cm]{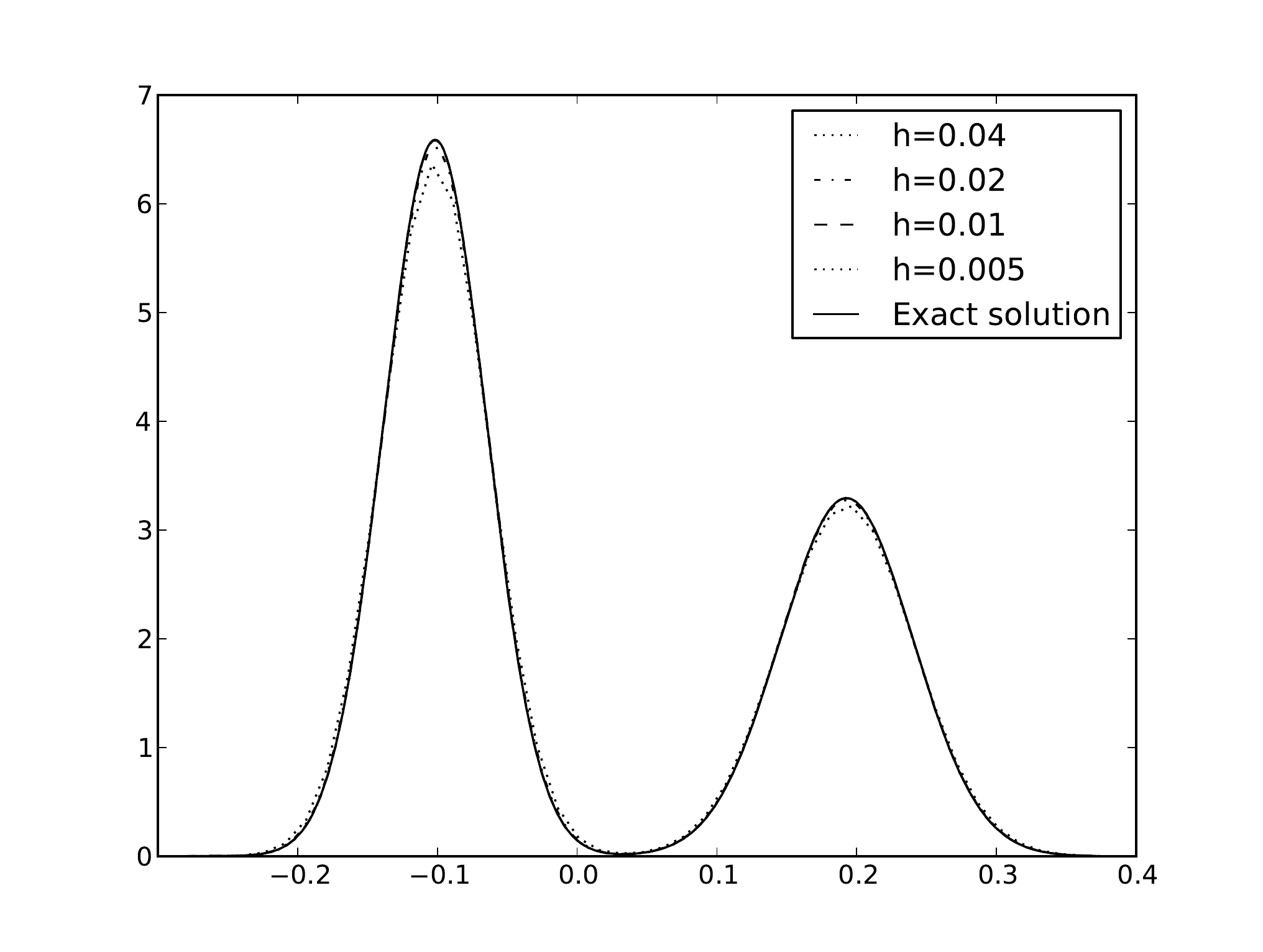}
   \end{minipage}
   \begin{minipage}[c]{0.48 \textwidth}
      \includegraphics[width=6.8cm]{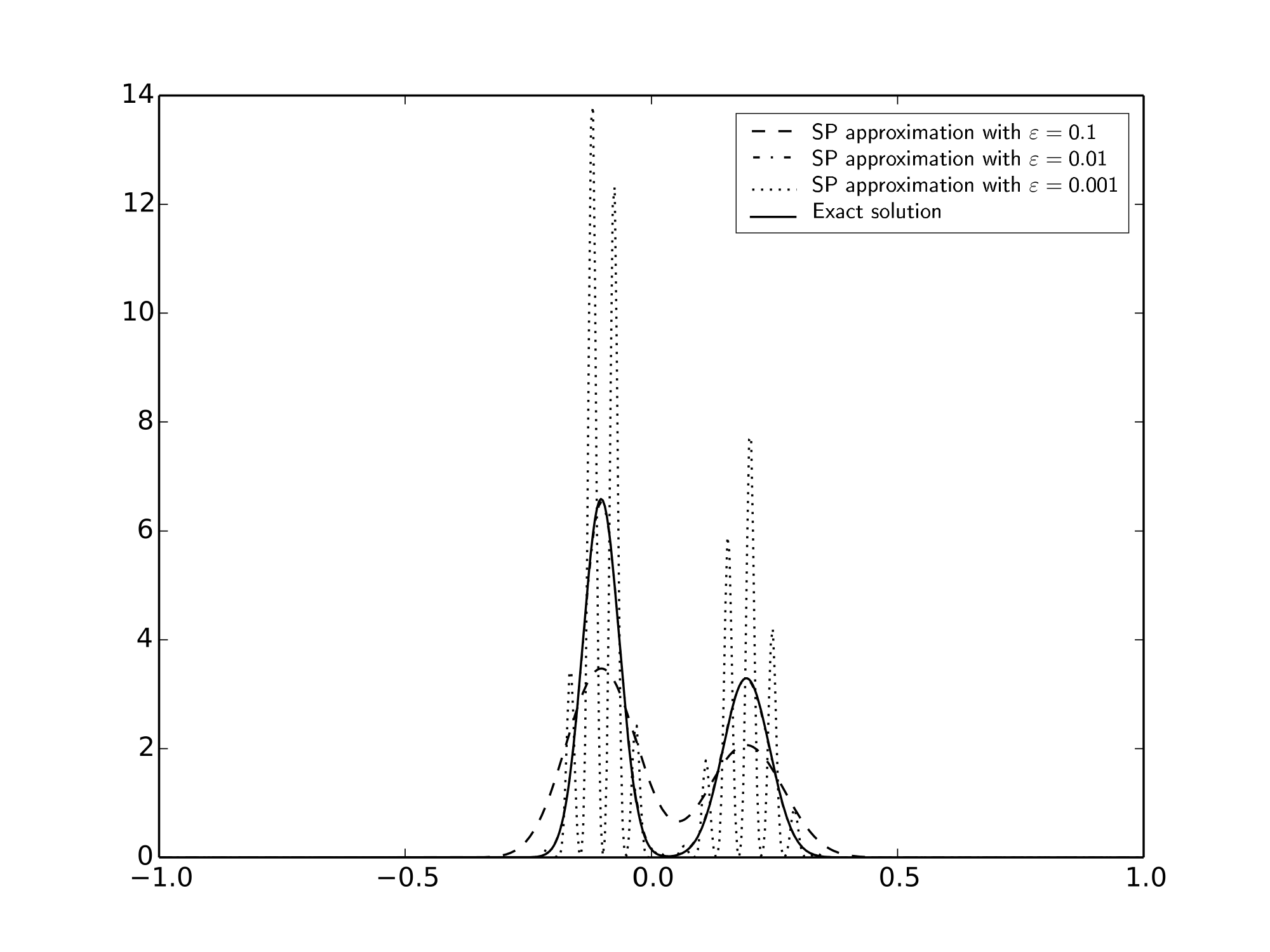}
   \end{minipage}
        \caption{\label{figvalid}
Comparisons for $W(x)=x^2$, at $t=0.5$ with initial data \eqref{rhoini1} and $\Delta t=10^{-4}$.
 Left Figure: Comparison between  the exact solution $\rho(t,x)$ given by \eqref{solexactW2} and  $\rho_h^n$ computed by LTP method for various $h$.
Right Figure: Approximated values $\rho_h^n$ obtained with a classical Smooth Particle (SP) method for $h=0.01$ and different fixed particle sizes $\varepsilon$.}
\end{figure}
Shape functions for the particle method are here
 B3-splines  given by 
 \eqref{B3-spline}.  
We first examine the validation of our code by comparison of the numerical solution and the exact solution 
of \eqref{main-eq} with $W(x)= x^2$. Due to the conservation of the center of mass,
$$
 \forall t \geq 0, \qquad \int_\R x\rho(t,x)dx=  \int_\R x\rho^0(x)dx:=\lambda\,,
$$
the solution is explicitly given by 
\beq \label{solexactW2}
\rho(t,x)=\rho^0\left((x-\lambda)e^{2t}+\lambda   \right)e^{2t}\,,
\eeq
using the method of characteristics. Figure \ref{figvalid} (left) shows the exact solution of \eqref{main-eq} 
with initial data \eqref{rhoini1} and the numerical solution computed with the LTP method, together 
with the $L^1$ and $L^{\infty}$ errors with respect to $h$. 

Let us now compare the results with classical particle methods. 
One of the drawbacks of classical particle methods in which the density is reconstructed with shape function of same size
$$
\rho_{\varepsilon}^n(x)= \sum_{k\in \Z} \omega_k \frac{1}{\varepsilon} \varphi \left( \frac{x-x_k^n}{\varepsilon} \right) \,,
$$
is the need to choose adequate values of $\varepsilon$. Indeed, if $\varepsilon$ is too  small compared to the distance between two particles, the reconstructed density will vanish   between particles and is thus irrelevant; and if   $\varepsilon$ is too large the reconstructed density will be too spread out and the results lack accuracy,  as it is demonstrated in Figure \ref{figvalid} (right). 

\begin{figure}[h]
   \centering
   \begin{minipage}[c]{0.46 \textwidth}       
        \includegraphics[width=6.5cm]{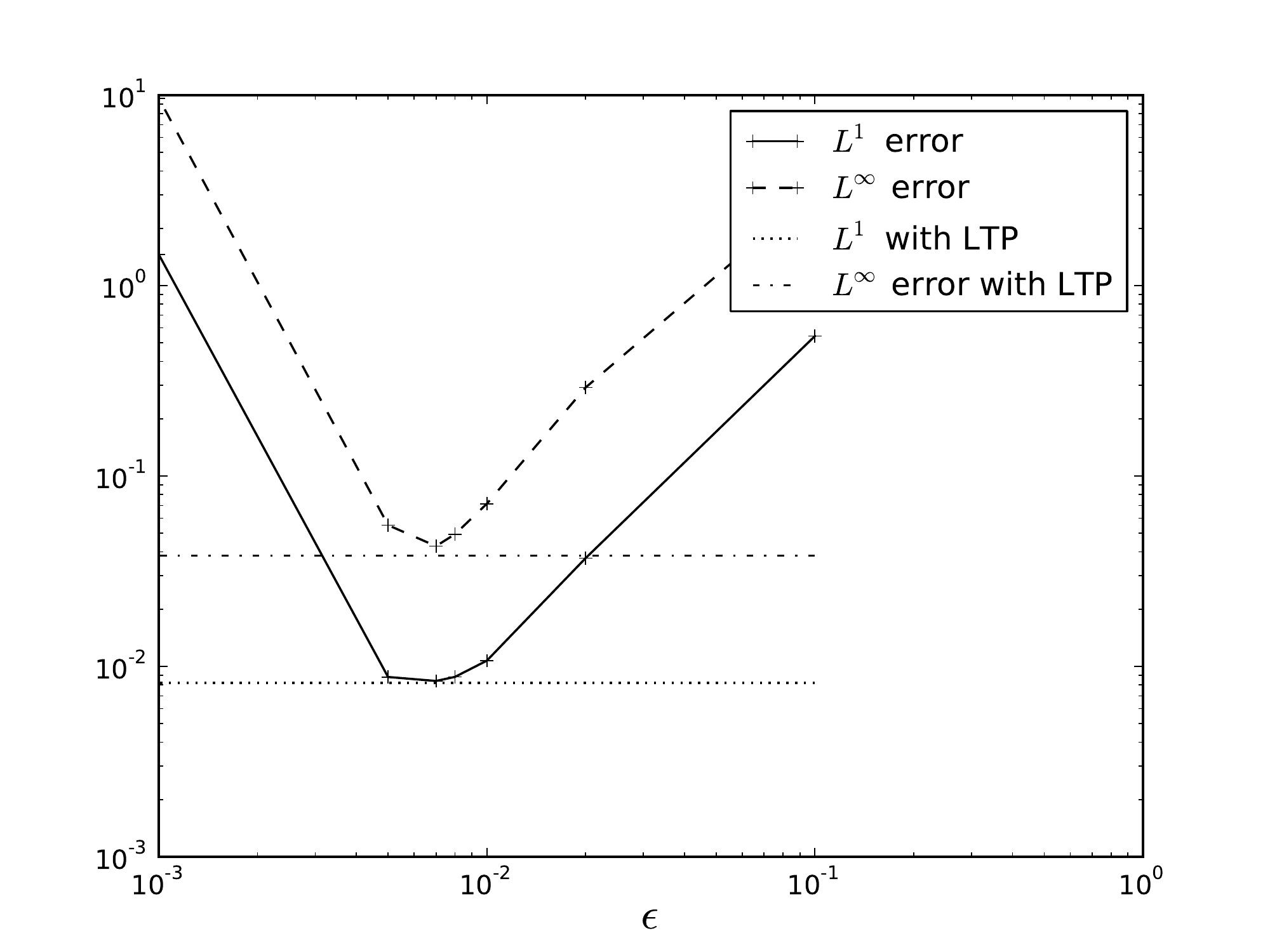}
   \end{minipage}
  \begin{minipage}[c]{0.46 \textwidth}
         \includegraphics[width=6.5cm]{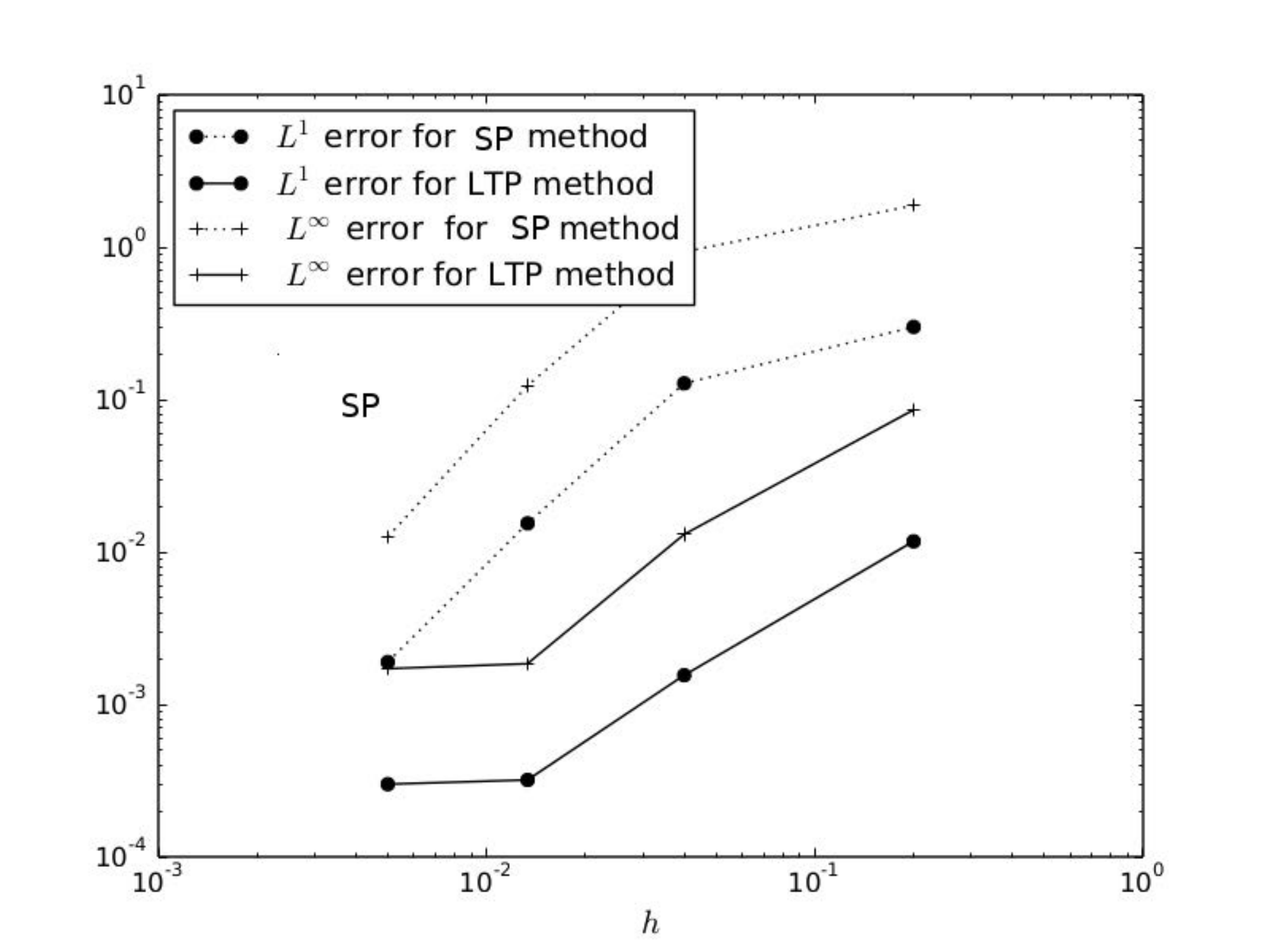}
   \end{minipage}
        \caption{\label{figcompapart2} 
Left Figure: Log-Log Plot of the $L^1$ and $L^{\infty}$ errors of the SP method using various values for the particle radius $\varepsilon$, versus those of the LTP method (both using $h=0.01$ and $\Delta t=0.01$).
Right Figure: Log-Log Plot of the $L^1$ and $L^{\infty}$ errors for the SP and LTP methods with $\Delta t=0.0001$ and different values 
of $h=\varepsilon$. In both cases, $W(x)=x^2$ and $\rho^0$ is given by \eqref{rhoini1} with errors computed at $t=0.5$.}        
\end{figure}

Figure \ref{figcompapart2} presents the $L^1$ and $L^{\infty}$ errors between a standard Smooth Particle (SP) method (with different values of $\varepsilon$) and our LTP method for the potential $W(x)=x^2$ for which solutions are explicit by the method of characteristics, see \eqref{solexactW2}. On the left picture, we observe that the optimal $\varepsilon$, at this instant $t$, for a classical particle method is well captured with the LTP method. 

One could object that the gain is not significantly better with the LTP method. However, since particles aggregate, the average distance between two particles decreases exponentially in time, and consequently the optimal size $\varepsilon$ for reconstruction in classical particle method is not the same during the whole simulation. Therefore, an evolution in time of $\varepsilon$ is much better adapted. Notice that the case of potential $W(x)=x^2$ is not particularly the best example to show the higher accuracy of the LTP method with respect to the classical particle method since all particles have the same size at time $t$ because $j_{\ex}^{0,t}(x)=e^{-2t}$,  see \eqref{solexactW2}. 
Moreover, the gain of accuracy with the LTP method is  even clearer while using B1-spline as shape functions instead of B3-spline, as it is demonstrated on Figure \ref{comparaisonB1etB3}.
\begin{figure}[h]
   \centering
   \begin{minipage}[c]{0.46 \textwidth}       
        \includegraphics[width=6.5cm]{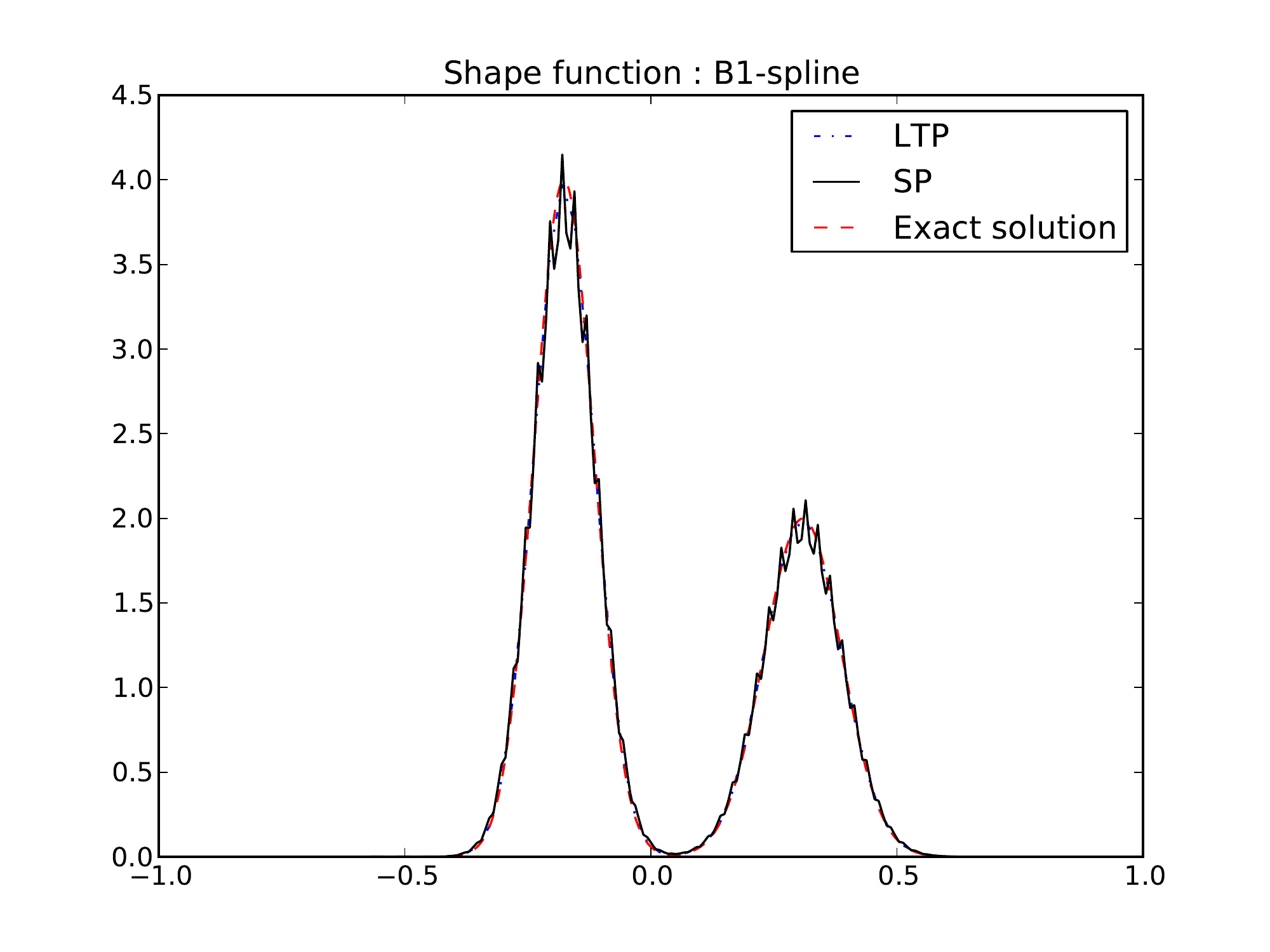}
   \end{minipage}
  \begin{minipage}[c]{0.46 \textwidth}
         \includegraphics[width=6.5cm]{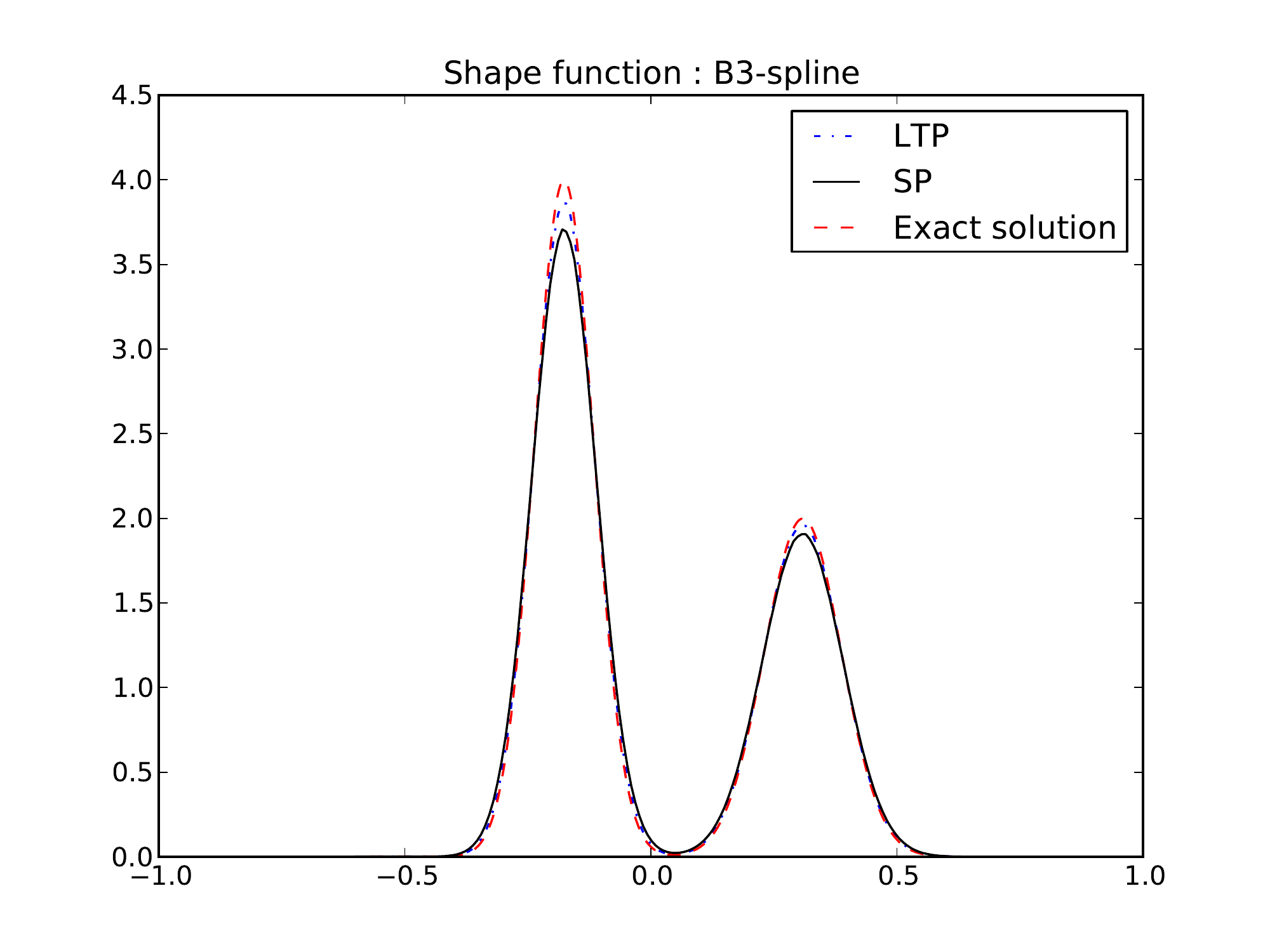}
   \end{minipage}
        \caption{\label{comparaisonB1etB3} 
Comparisons between exact solution (at $t=0.5$) and  approximated solutions $\rho_h^n$ with LTP method and SP method, with $W(x)=x^2/2$, $\rho^0$ given by \eqref{rhoini1},  $h=1/25$ and $\Delta t=10^{-3}$. On the left Figure, the shape function $\varphi$ is a hat function, whereas on the right Figure, $\varphi$ is a B3-spline.} 
\end{figure}

\subsection{Numerical Simulations}

We now take advantage of the method to explore the behavior for other attractive potentials of type $W(x)=\tfrac{|x|^{a}}{a}$, $a>1$. 
Notice that for $a\geq 2$ the potential is smooth while for $1<a<2$ is singular once $W$ is cut-off at infinity or if the initial 
data is compactly supported since the effective values of the potential lie on a bounded set and $W$ can be cut-off at infinity 
without changing the solution.
Figure \ref{figWa} presents the numerical results obtained by the LTP method in the case of $a=1.5$ and $a=2.5$. We represent the approximation of the density $\rho_h^n$, and also the reconstructed velocity $u_h^n$ and the reconstructed size of particles $h^n$ by piecewise linear interpolation such that
$$
u_h^n(x_k^n) = - \nabla W * \rho_h^n(x_k^n),
\qquad
\mbox{and}
\qquad
h^n(x_k^n) = h  \prod_{m=0}^{n-1} j_k^m.
$$
\begin{figure}[h!]
\centering
\begin{tabular}{|c|c|c|}
\hline
      & $a=1.5$ & $a=2.5$  \\
\hline
Potential $W$
   &    
    \begin{minipage}[c]{0.4 \linewidth} 
    \centering
    \includegraphics[width=5cm]{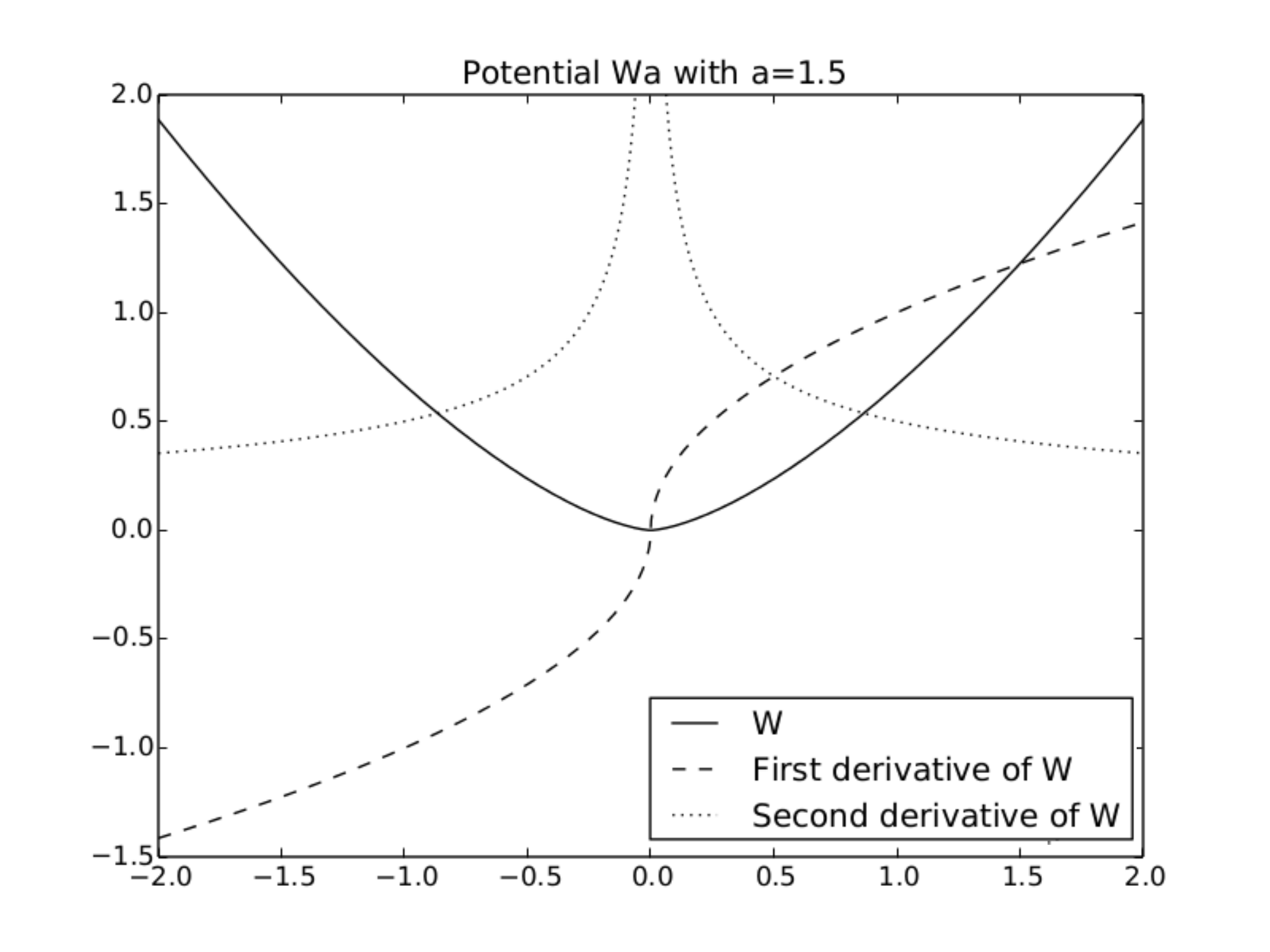} 
   \end{minipage} 
    & 
       \begin{minipage}[c]{0.4 \linewidth} \centering
    \includegraphics[width=5cm]{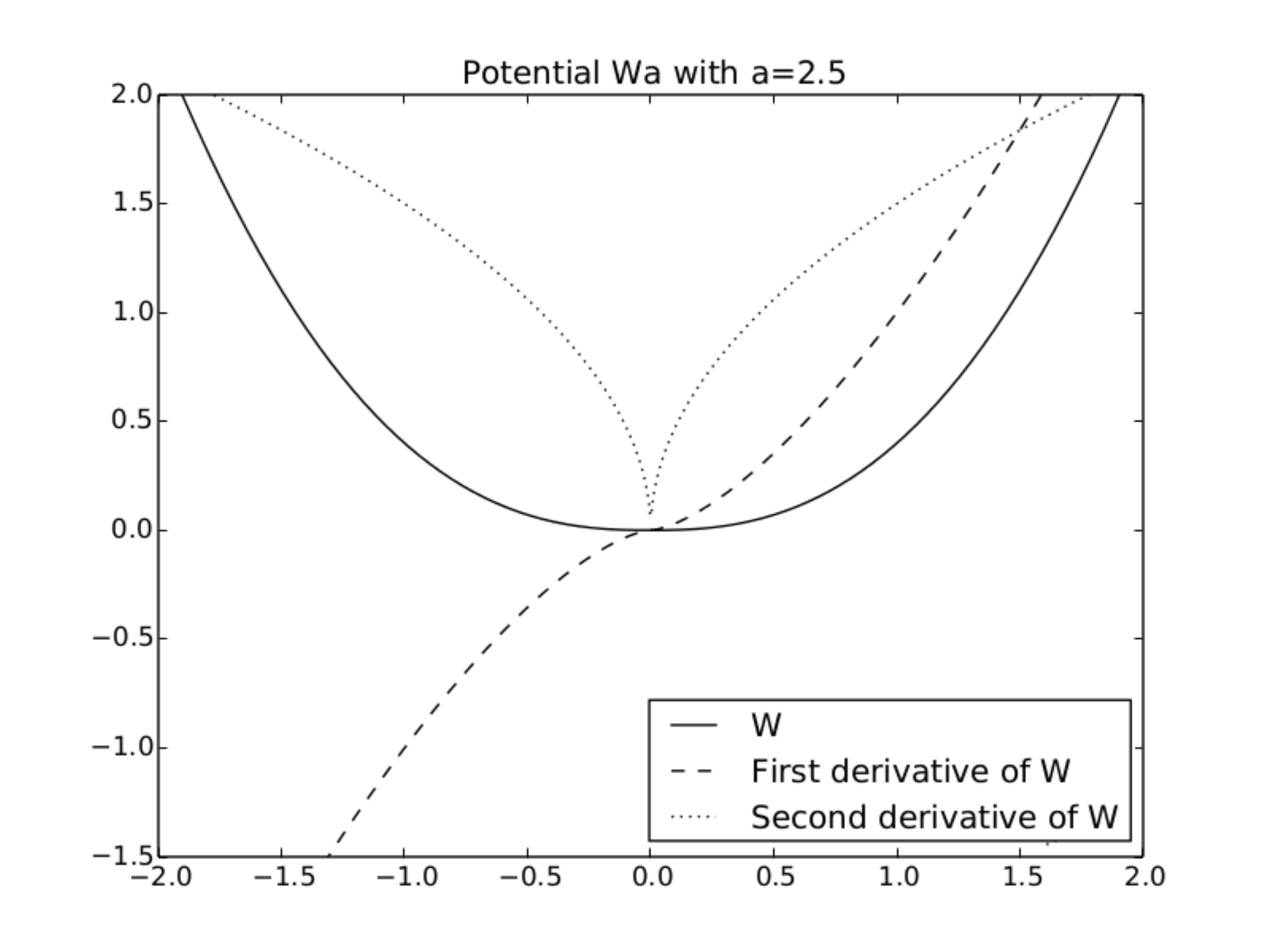} 
   \end{minipage}
\\
\hline
Density $\rho_h^n$ 
   & 
   \begin{minipage}[c]{0.4 \linewidth} \centering
    \includegraphics[width=5cm]{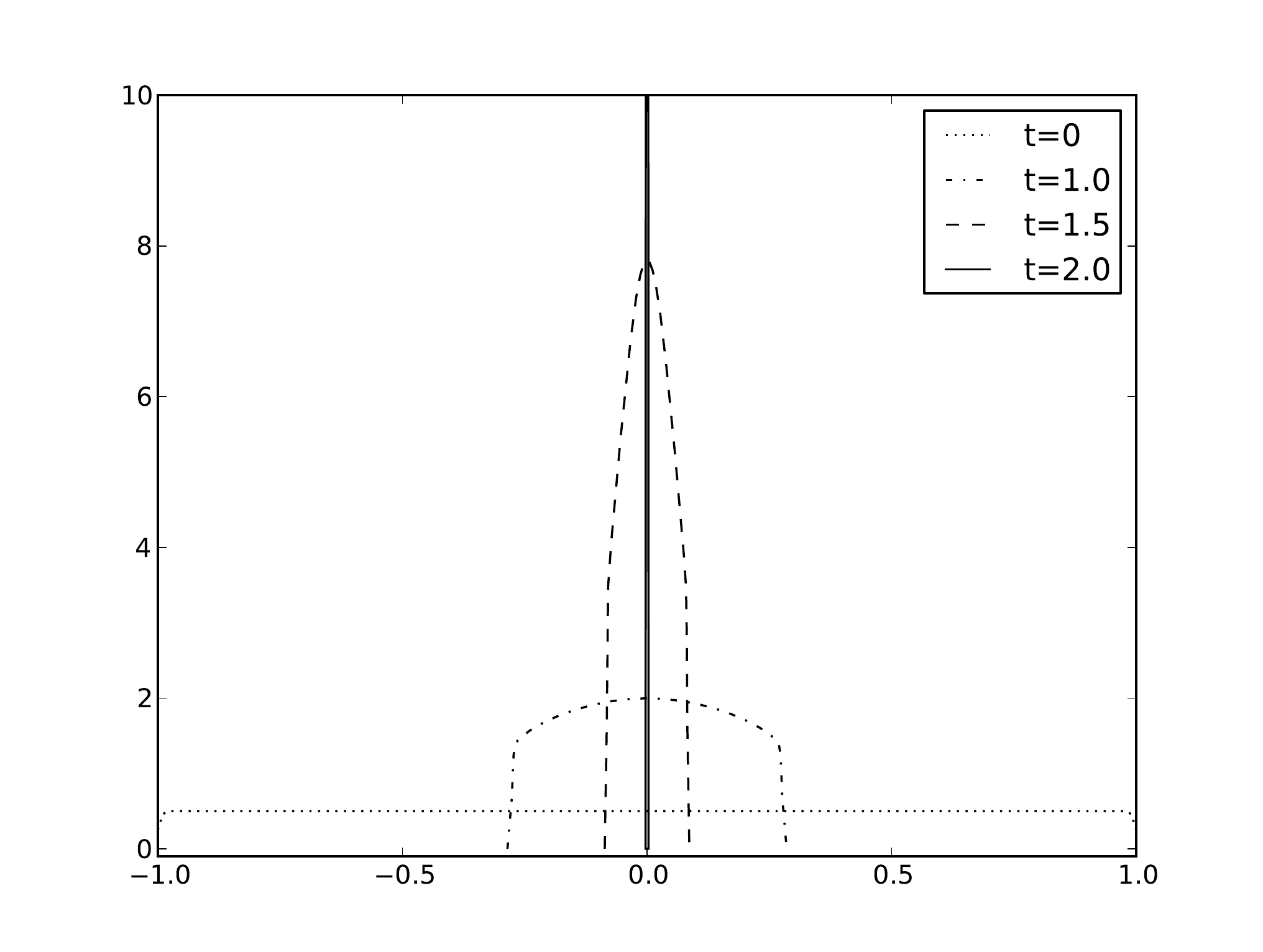}
   \end{minipage}  
   &
   \begin{minipage}[c]{0.4 \linewidth} \centering
    \includegraphics[width=5cm]{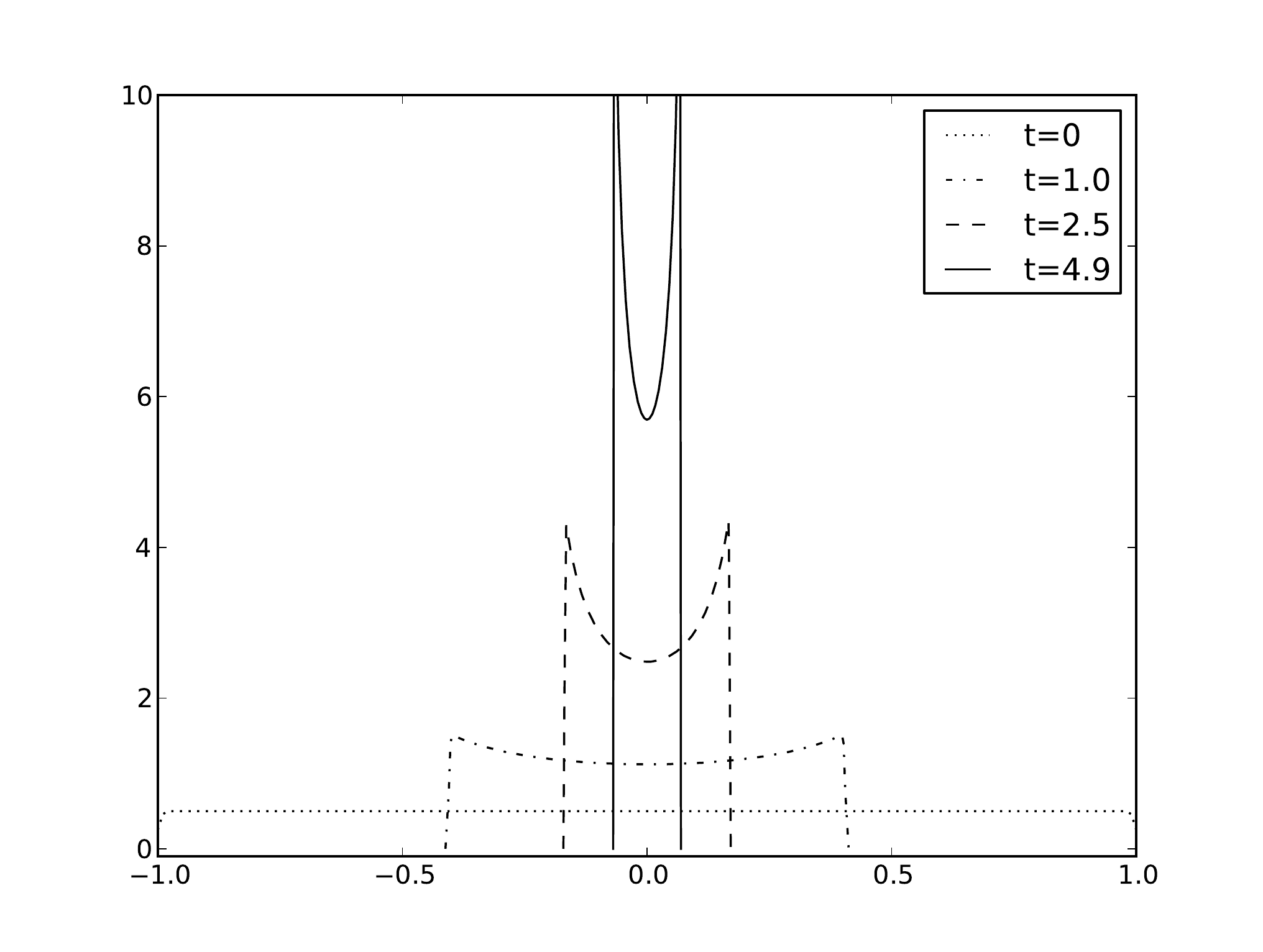}
   \end{minipage} \\
\hline
 \begin{minipage}[c]{ 0.18\linewidth}\centering
Velocity \\ $\nabla W \ast \rho_h^n$
    \end{minipage}
   &
    \begin{minipage}[c]{0.4 \linewidth} \centering
    \includegraphics[width=5cm]{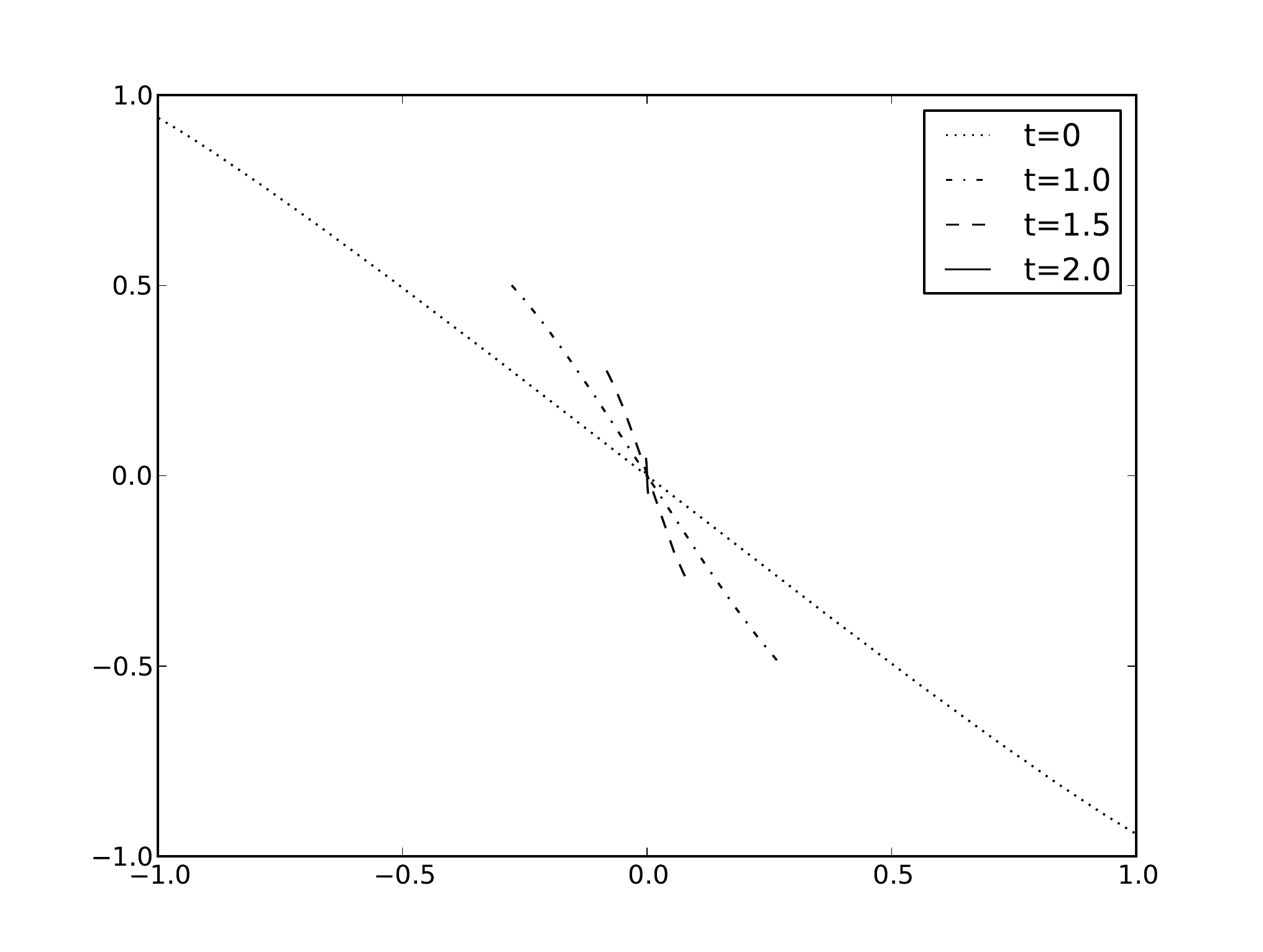} 
   \end{minipage}
  &
    \begin{minipage}[c]{0.4 \linewidth} \centering
    \includegraphics[width=5cm]{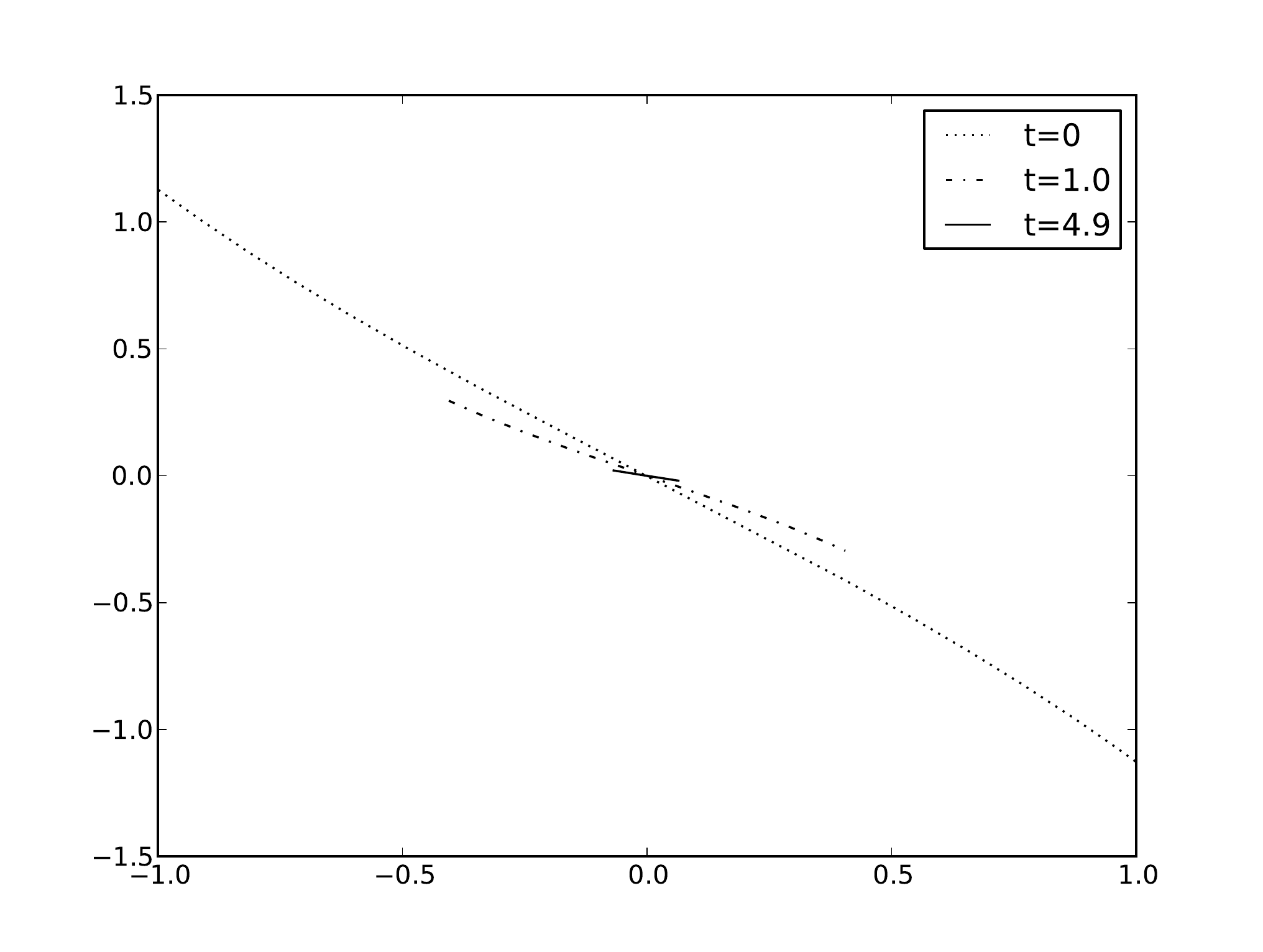}
   \end{minipage} \\
\hline
 \begin{minipage}[c]{ 0.18\linewidth}\centering
  Size of \\ particles $h_k^n$
    \end{minipage}
  &
    \begin{minipage}[c]{0.4 \linewidth} \centering
    \includegraphics[width=5cm]{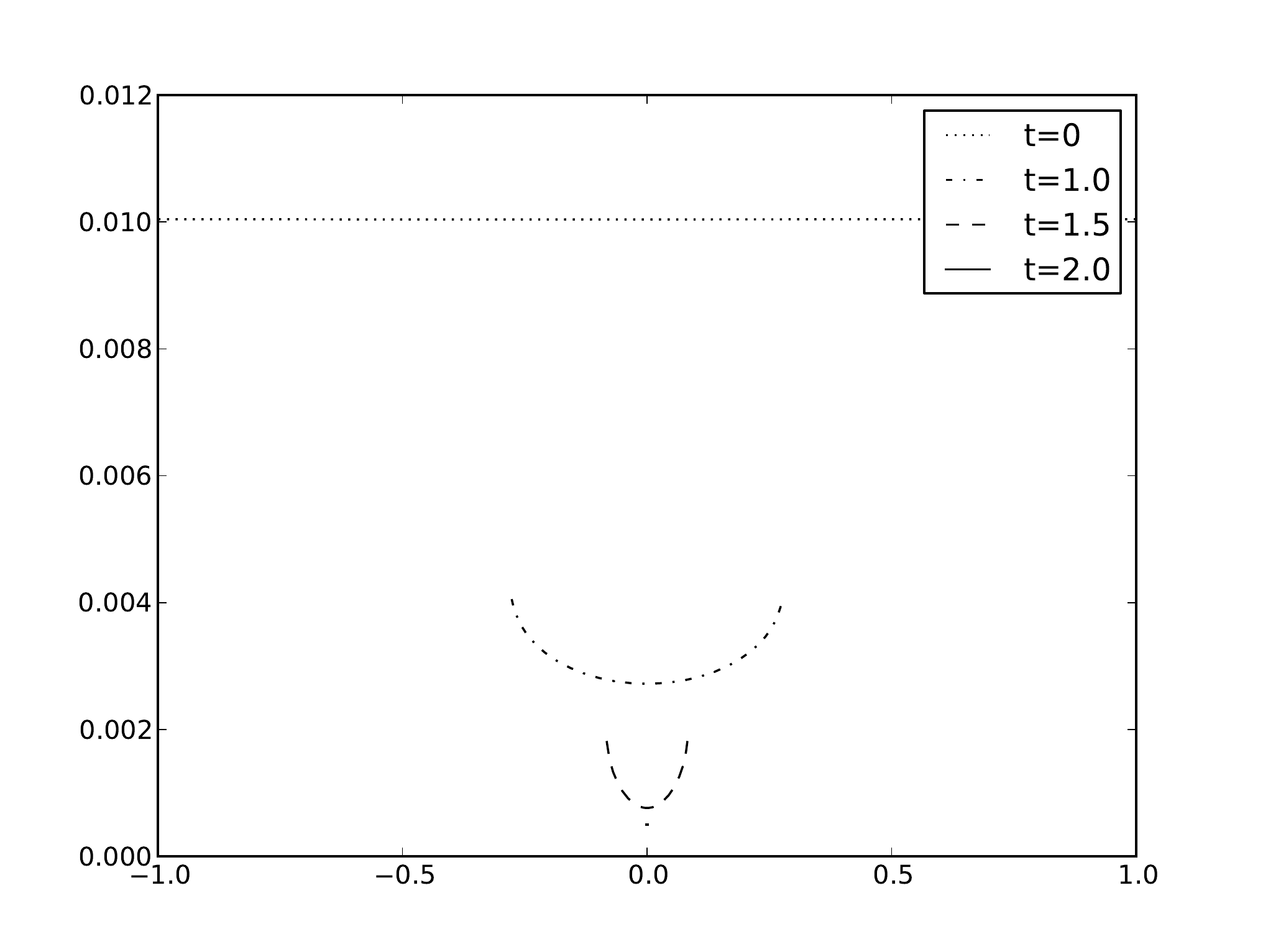}
   \end{minipage}
  &
    \begin{minipage}[c]{0.4 \linewidth} \centering
    \includegraphics[width=5cm]{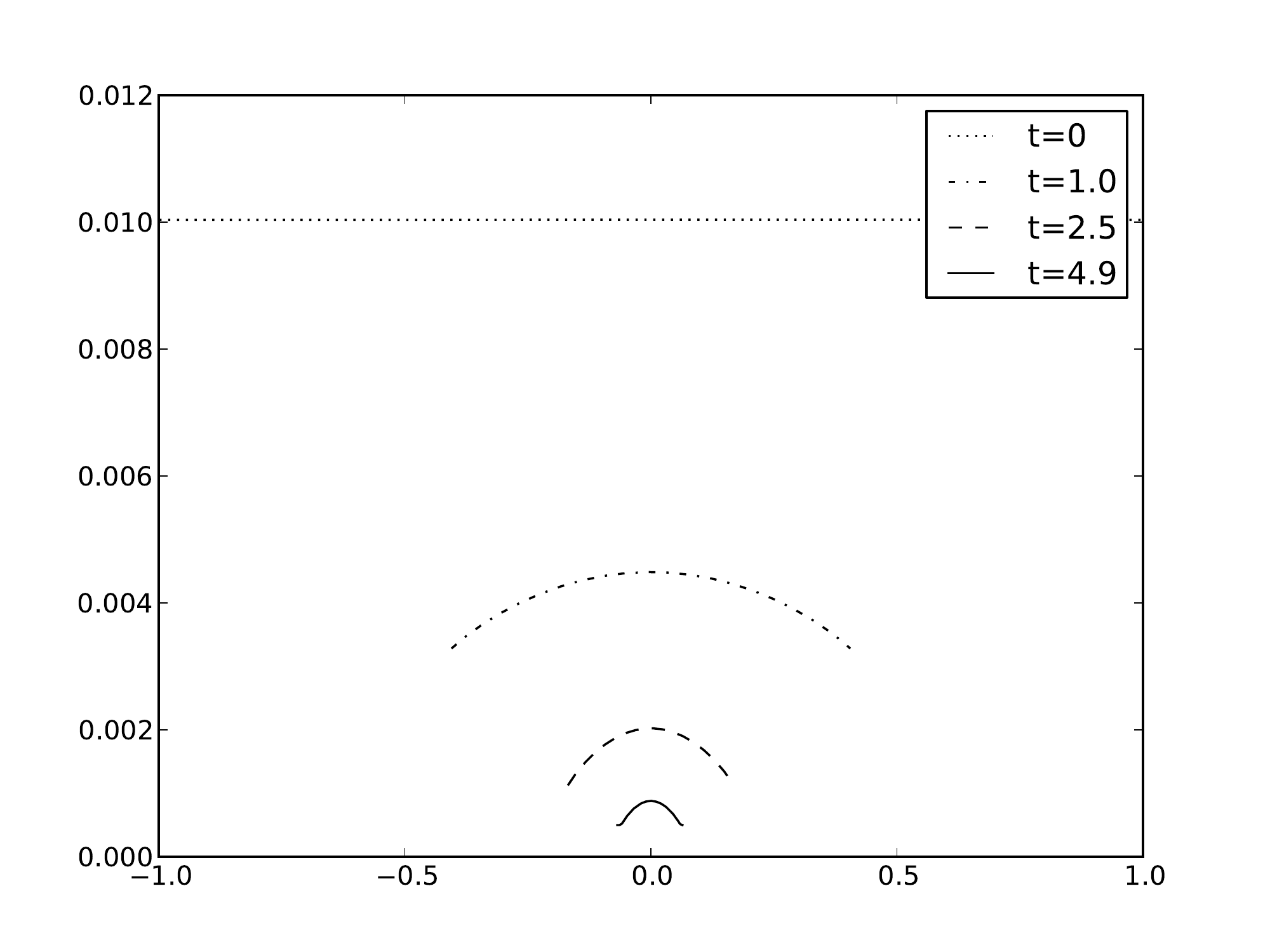}
   \end{minipage}\\
\hline
\end{tabular}
\caption{\label{figWa} Approximated density and reconstructed velocity and size of particles computed by the LTP method with $h=0.01$ for $W(x)=\tfrac{|x|^a}{a}$, with $a=1.5$ or $a=2.5$ and $\rho^0$ given by \eqref{rhoini3} with the number of time-steps $N=200$.}
\end{figure} 
Potentials and their derivatives are also represented. In both cases, we observe that the density converges to a Dirac mass. Figure \ref{figWa} also shows that for $a=2.5$, $W'' \in L_{loc}^{\infty}$, no finite-time blow-up in $L^\infty$ appears, opposite to the case $a=1.5$ in agreement with the results proved in \cite{BCL}. Notice also the different qualitative behavior in their trend to blow-up as studied in \cite{HB}.

\begin{figure}[h!]
\hspace{-1cm}
\begin{tabular}{|c|c|c|c|}
\hline
& $b=1.1$ & $b=1.5$ & $b=2.5$ \\
\hline
Potential $W$
    &    
    \begin{minipage}[c]{0.3 \linewidth} 
    \includegraphics[width=4cm]{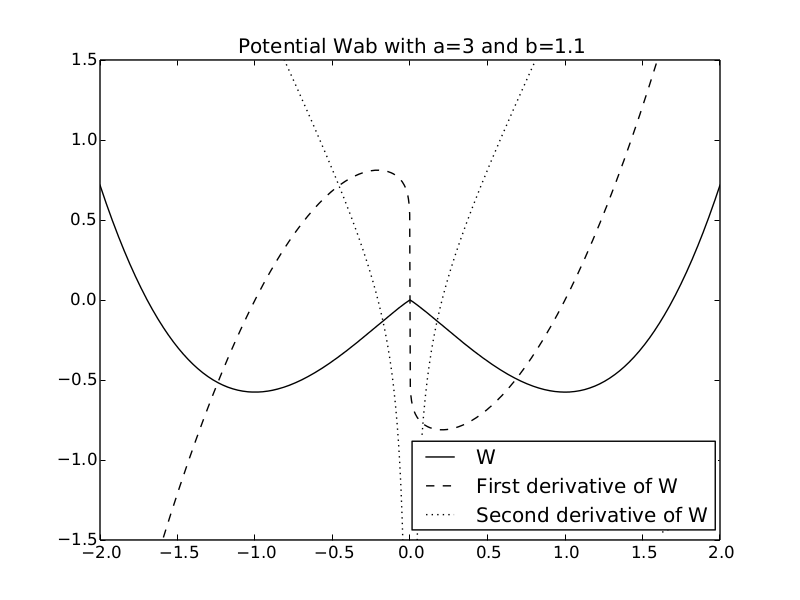} 
   \end{minipage}
    & 
    \begin{minipage}[c]{0.3 \linewidth} 
    \includegraphics[width=4cm]{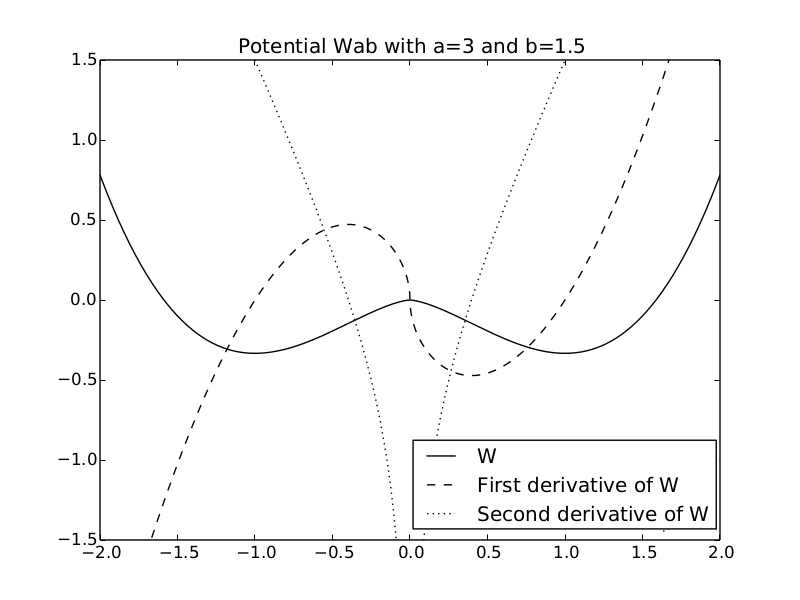} 
   \end{minipage}
    &
    \begin{minipage}[c]{0.3 \linewidth} 
    \includegraphics[width=4cm]{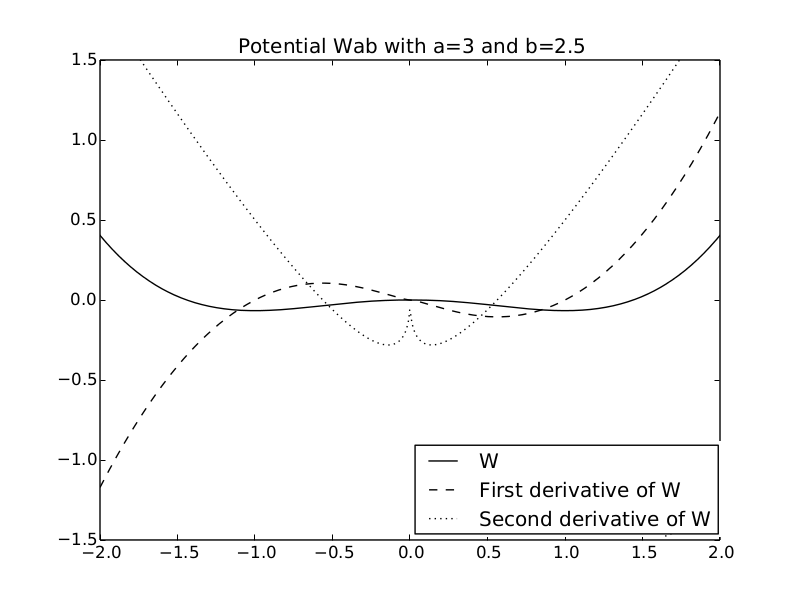} 
   \end{minipage}
\\
\hline
Density $\rho_h^n$
   &
  \begin{minipage}[c]{0.3 \linewidth} 
        \includegraphics[width=4cm]{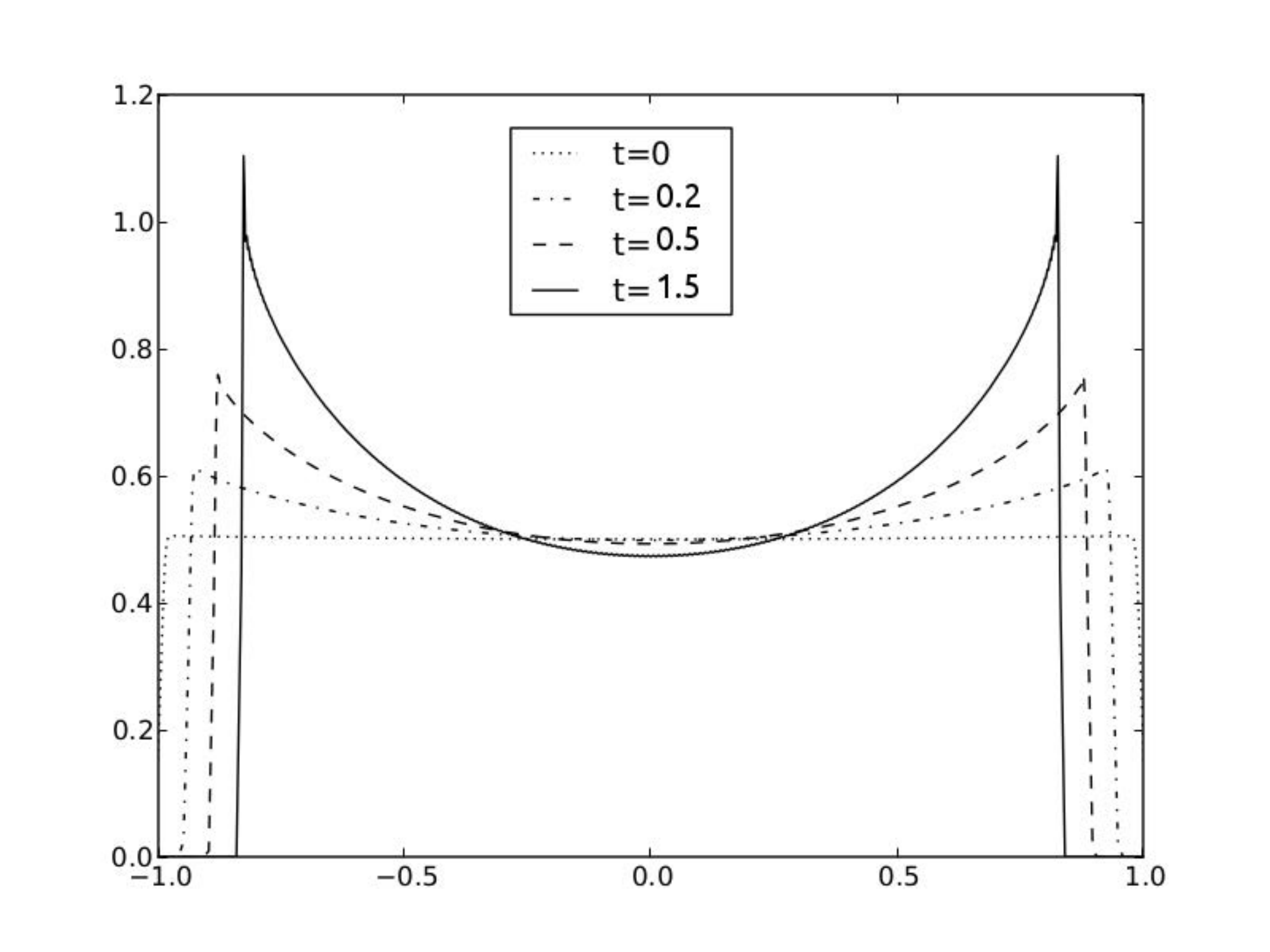}
   \end{minipage}
   &
    \begin{minipage}[c]{0.3 \linewidth} 
    \includegraphics[width=4cm]{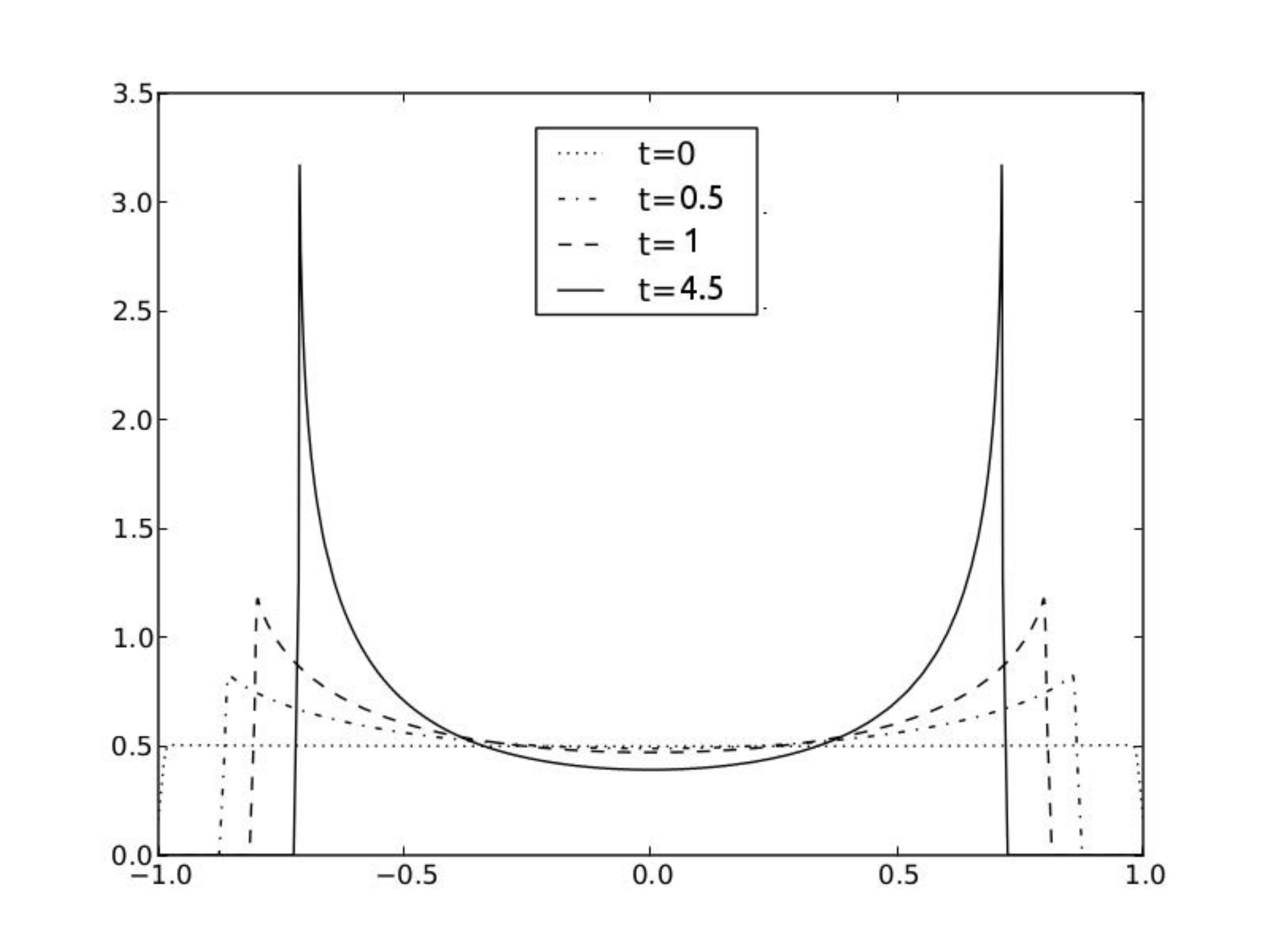}
   \end{minipage}
   &
    \begin{minipage}[c]{0.3 \linewidth} 
     \includegraphics[width=4cm]{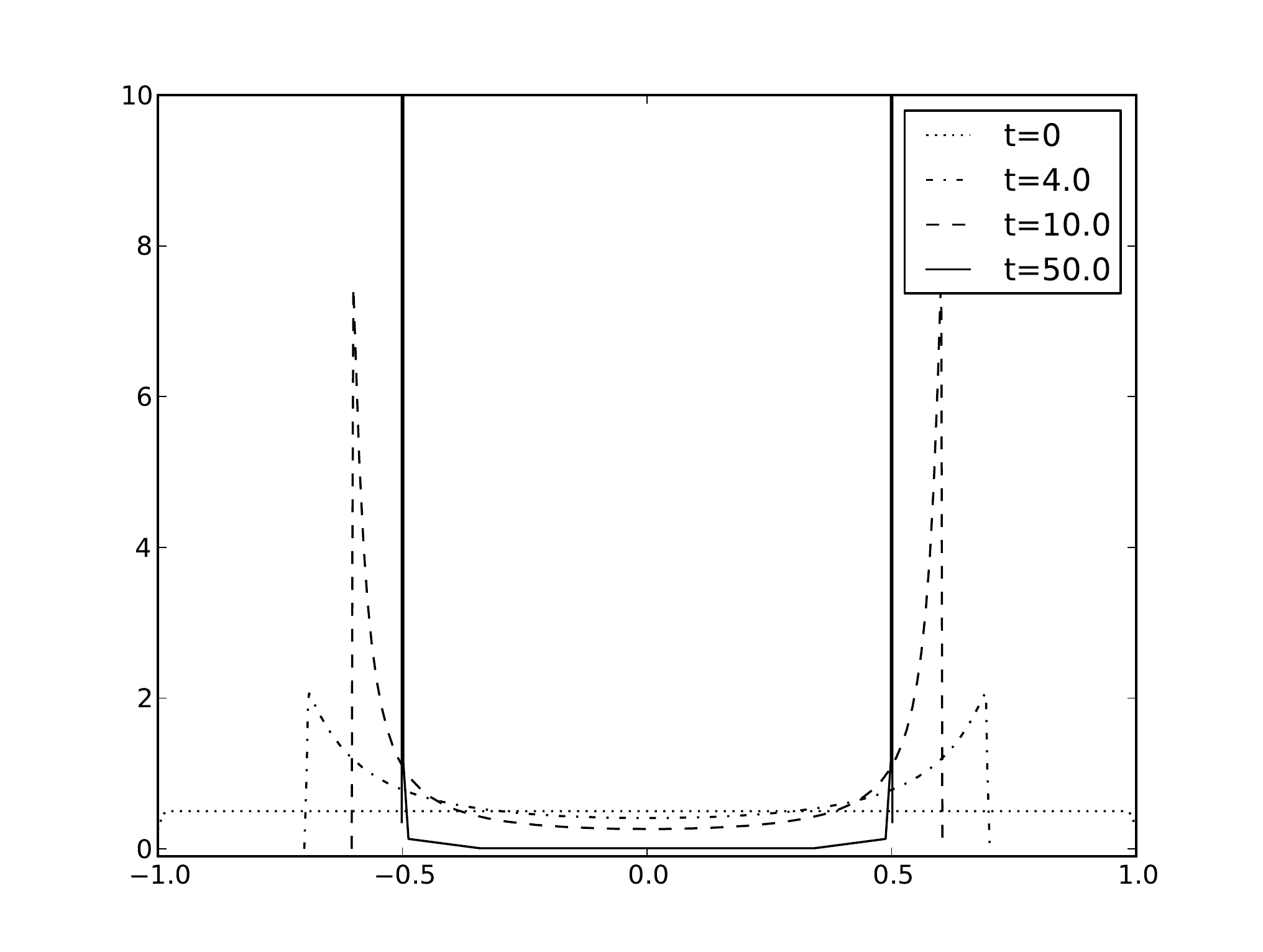}
   \end{minipage}
\\
\hline
 \begin{minipage}[c]{ 0.18\linewidth}\centering
Velocity \\ $\nabla W \ast \rho_h^n$
    \end{minipage}
   &
   \begin{minipage}[c]{0.3 \linewidth} 
         \includegraphics[width=4cm]{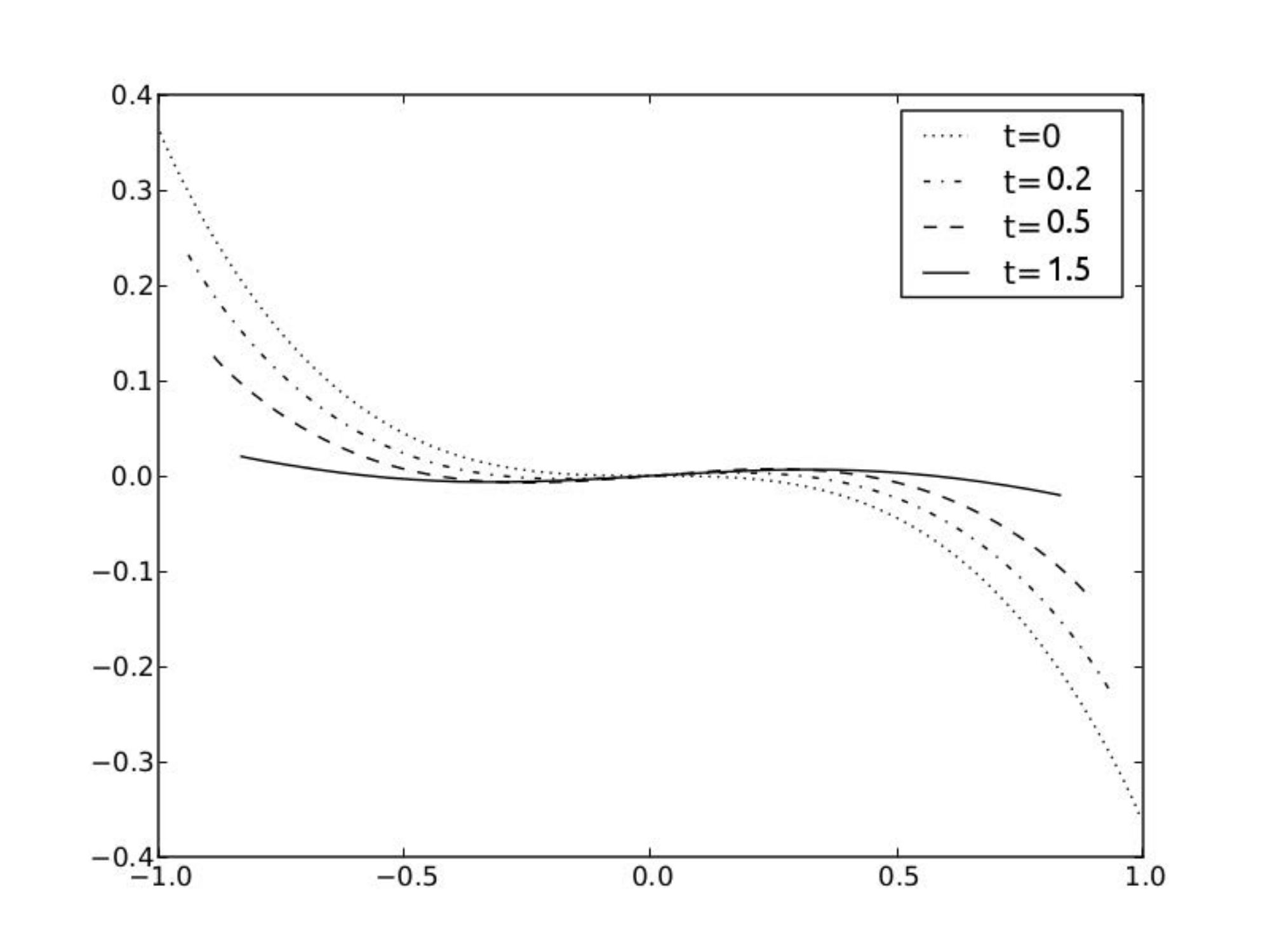}
   \end{minipage}   
  &
    \begin{minipage}[c]{0.3 \linewidth} 
     \includegraphics[width=4cm]{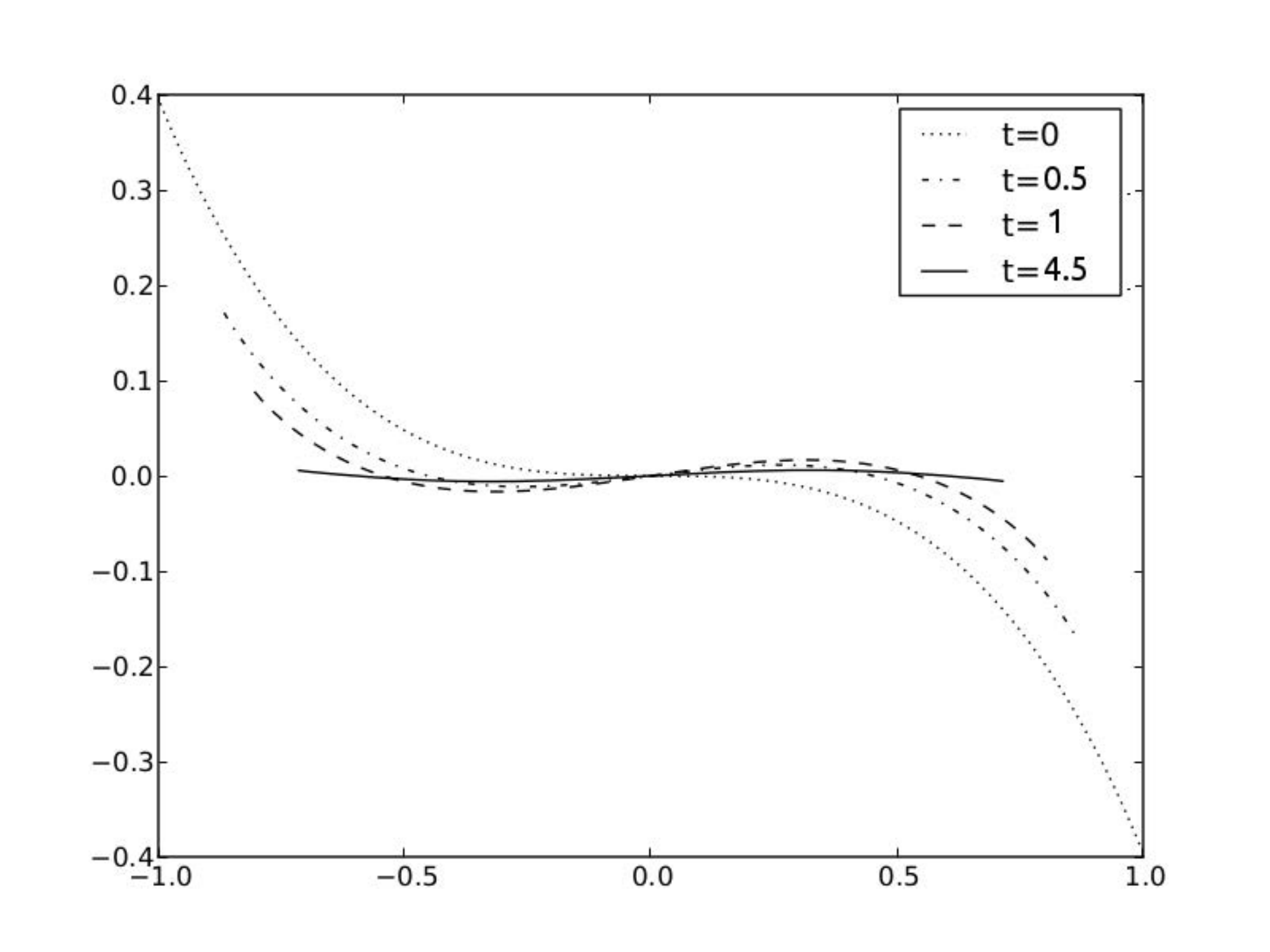}
   \end{minipage}
  &
    \begin{minipage}[c]{0.3 \linewidth} 
     \includegraphics[width=4cm]{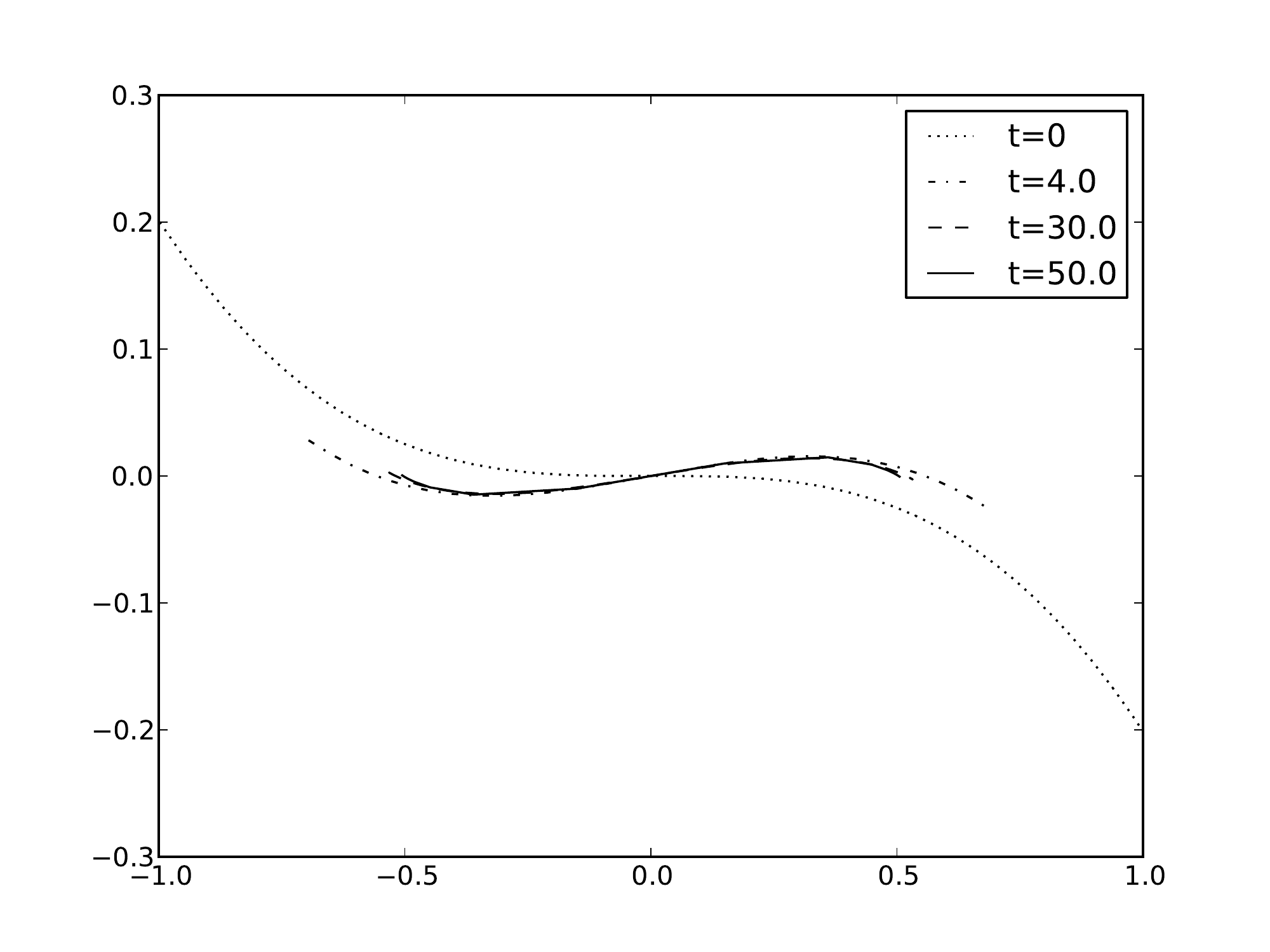}
   \end{minipage}
\\
\hline
 \begin{minipage}[c]{ 0.18\linewidth}\centering
  Size of \\ particles $h_k^n$
    \end{minipage}
  &
   \begin{minipage}[c]{0.3 \linewidth} 
         \includegraphics[width=4cm]{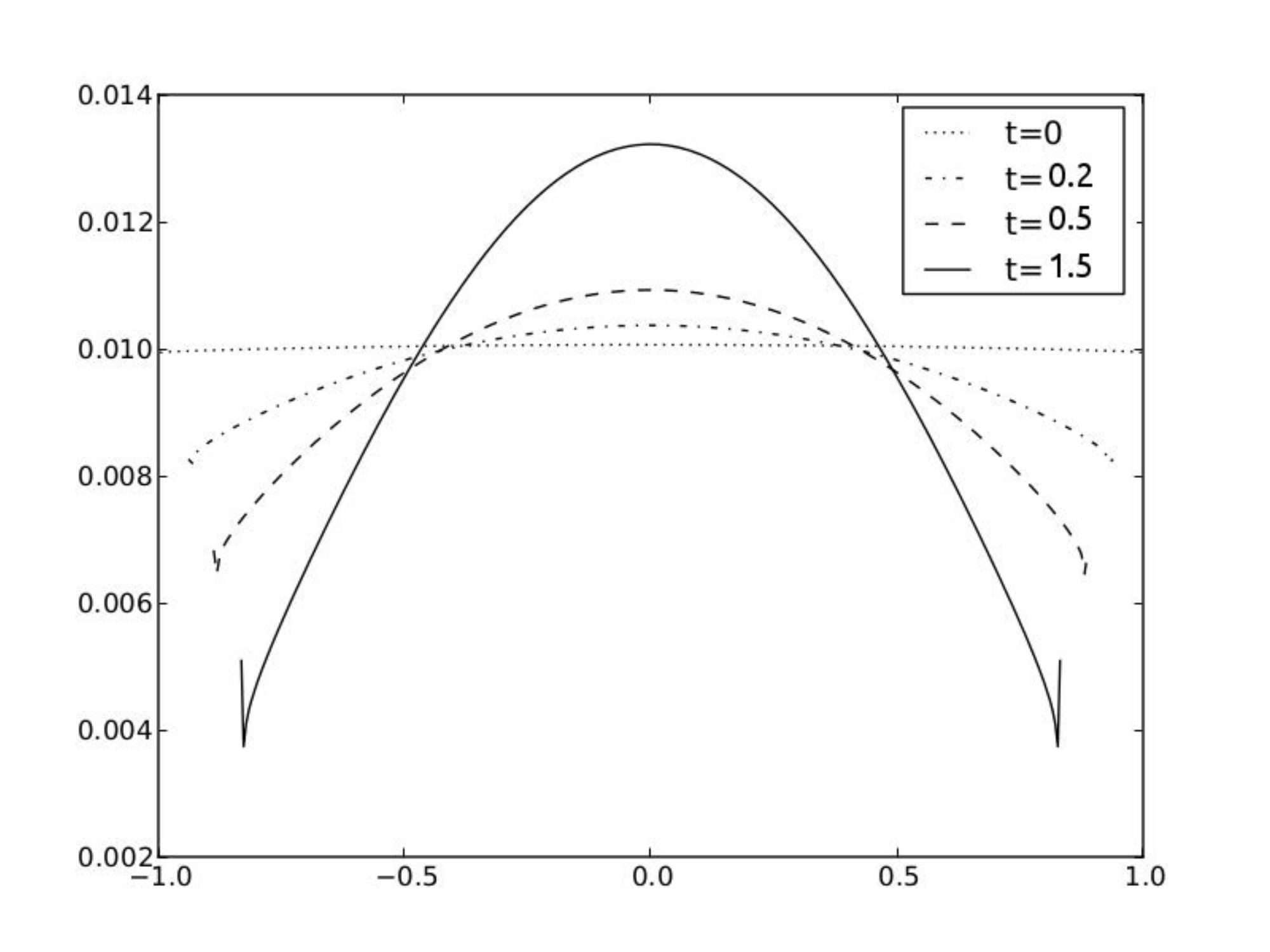}
   \end{minipage}
  &
   \begin{minipage}[c]{0.3 \linewidth} 
    \includegraphics[width=4cm]{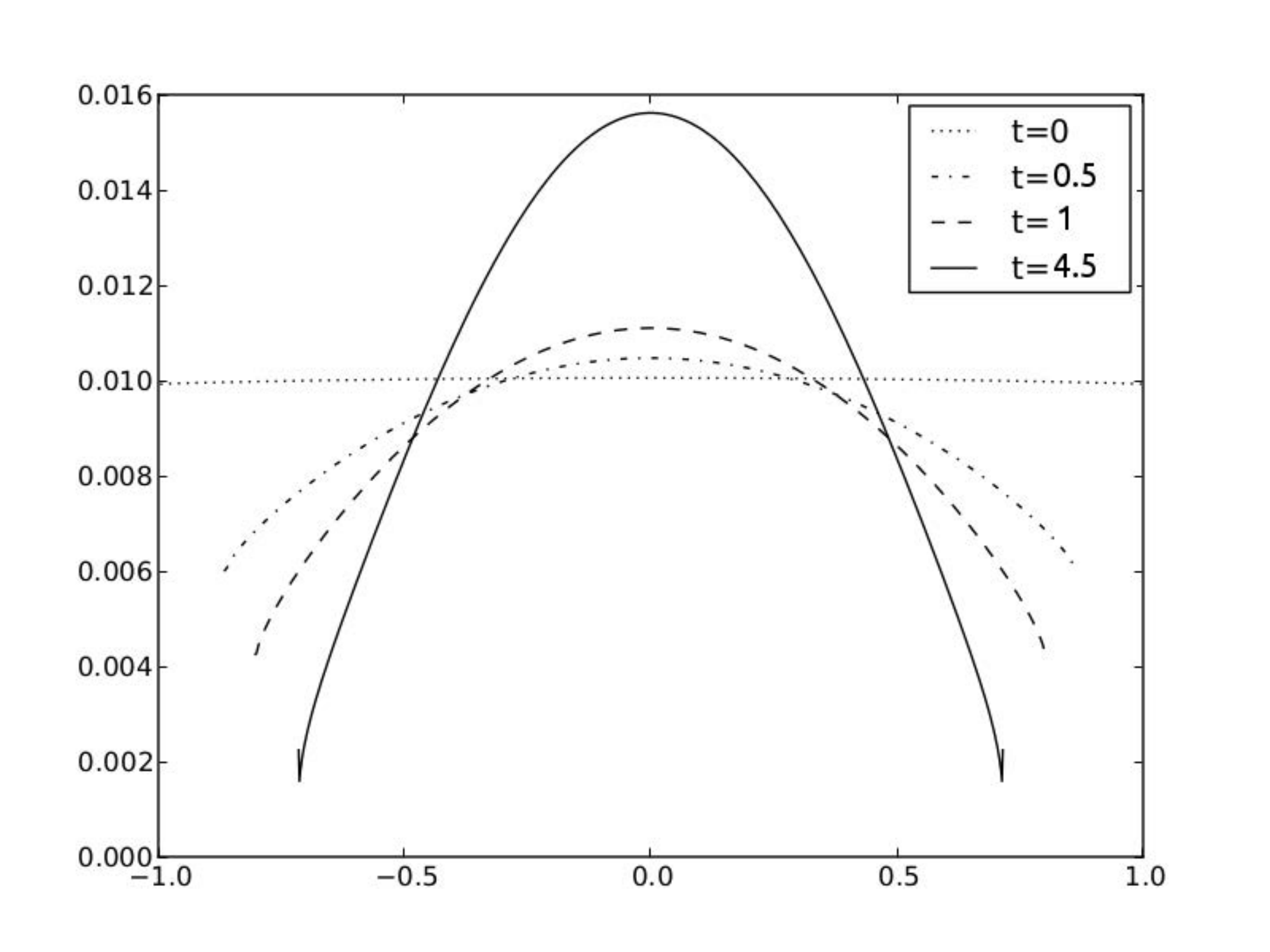}
   \end{minipage}
  &
    \begin{minipage}[c]{0.3 \linewidth} 
     \includegraphics[width=4cm]{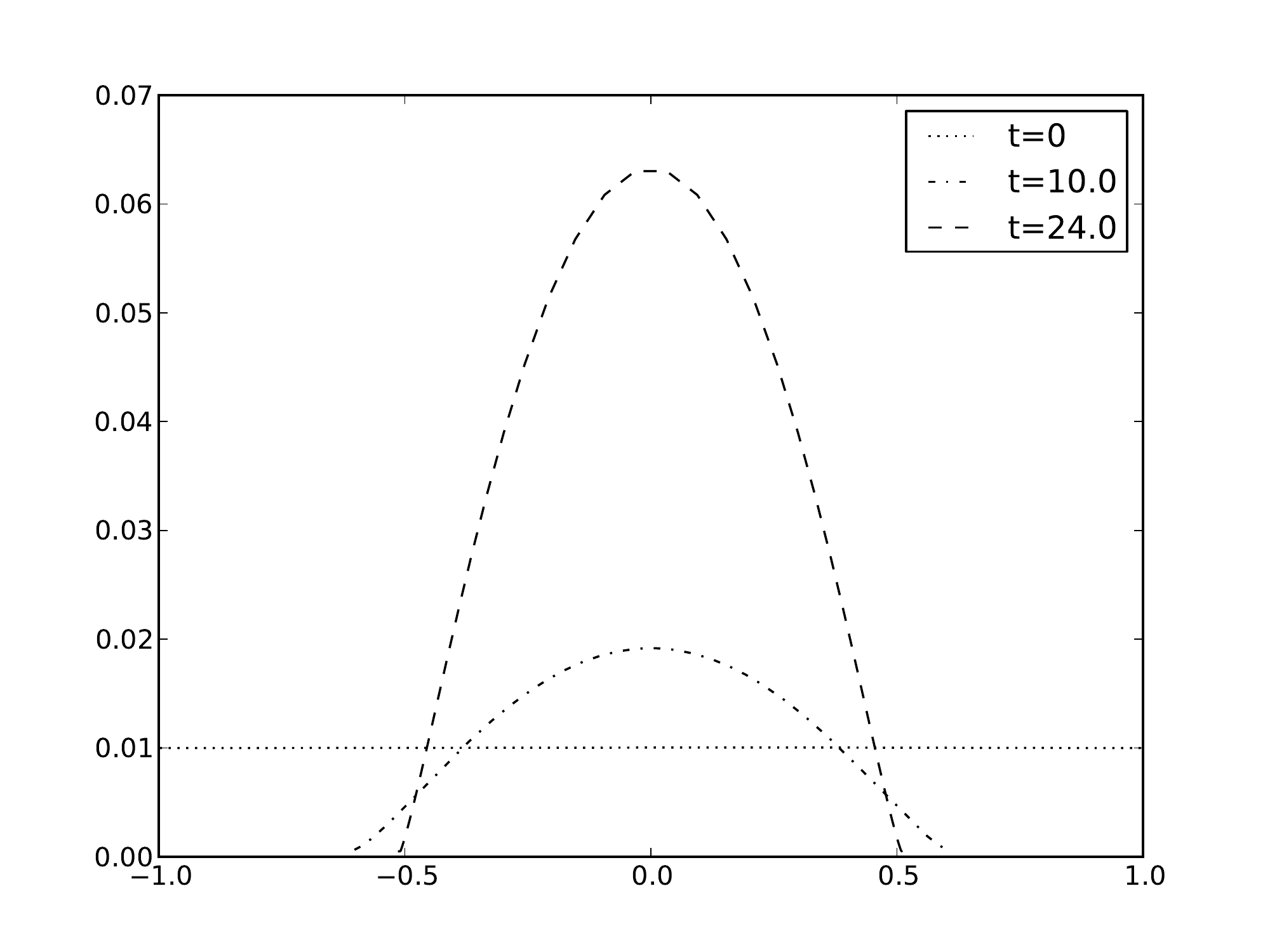}
   \end{minipage}
\\
\hline
\end{tabular}
\caption{\label{figWab} Approximated density and reconstructed velocity and size of particles computed by the LTP method with $h=0.01$ for $W(x)=\frac{|x|^a}{a} -\frac{|x|^b}{b}$, with $a=3$ and $b=1.5$ or $b=2.5$ and $\rho^0$ given by \eqref{rhoini3} with the number of time-steps $N=200$.}
\end{figure}
Now, we further analyse the blow-up behavior by looking at the case of attractive-repulsive potentials $W(x)=\tfrac{|x|^a}{a} -\tfrac{|x|^b}{b}$, $1<b<a$.  
Notice again that for $b\geq 2$ the potential is smooth while for $1<b<2$ is singular once $W$ is cut-off at infinity or if 
the initial data is compactly supported as discussed above.
Figure \ref{figWab} presents the approximated density $\rho_h^n$,  reconstructed velocity $u_h^n $ and size of particles $h^n$ obtained by the LTP method in the case of the attractive-repulsive potentials with $(a,b)=(3,1.5)$ and $(a,b)=(3,2.5)$. In this case $\rho^0$ is given by \eqref{rhoini3}. 

We observe that the long time asymptotics for $b=2.5$ are characterized by the concentration of mass equally onto Dirac deltas at two points in infinite time, while for $b=1.5$ we obtain a convergence in time 
towards a steady $L^1$ density profile seemingly diverging at the boundary of the support. This last behavior has been reported in several simulations and related problems \cite{BT2}. However, it has not been rigorously proven yet. Let us point out that the set of stationary states when the interaction potential is analytic in 1D consists of a finite number of Dirac deltas as proven in \cite{FellnerRaoul1,FellnerRaoul2}. This result also holds for $W(x)=\tfrac{|x|^a}{a} -\tfrac{|x|^b}{b}$, $2<b<a$, as it will be reported in \cite{CFP2}. 

Figure \ref{figWab3-2p5-3D} also represents the time evolution of the approximated density
for $(a,b)=(3,2.5)$, with $\rho^0$ given by \eqref{rhoini4}. Solutions in the range $2<b<a$ for initial data in $L^1\cap L^\infty$ exist globally in time, see \cite{Golse}. The numerical evidence shows that all solutions converge towards stationary states consisting of finite number of Dirac Deltas as $t\to\infty$ in this range.

\begin{figure}[h]
   \centering
    \includegraphics[width=8cm]{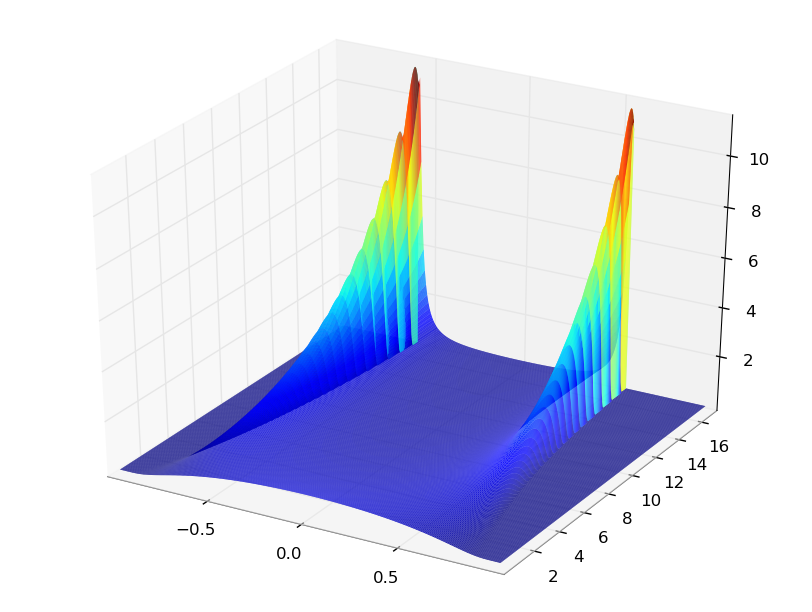}
   \caption{\label{figWab3-2p5-3D} Time evolution of the density for $W(x)=\frac{|x|^a}{a} -\frac{|x|^b}{b}$, with $a=3$, $b=2.5$ and $\rho^0$ given by \eqref{rhoini4} with $N=100$.}
\end{figure} 

Finally, we show in Figure \ref{comparison} the results of the stationary state of the SP method versus the LTP method for the potential $(a,b)=(4,2.5)$ with $N=100$. We observe how the good local adaption of the size of the particles makes our approximation much better with no oscillations with respect to the SP method showing the good performance of the LTP method in this case and its good properties at work.
As mentioned in the introduction, vortex-blob type methods have been shown to converge for the aggregation equation \eqref{main-eq}. 
They obtained convergence estimates in suitable $L^p$ norms for the velocity fields and the associated characteristics fields while the error for the densities was controlled in suitable $W^{-1,p}$-norms in \cite[Th. 3.8]{BC-blob}. The error estimates for vortex-blob and SP methods depend as usual on the regularization of particles and the fixed particle size related in a suitable way to get convergence. We have proven that the LTP method has in contrast direct error estimates for the densities in $L^p$  depending on the initial mesh size showing that the local adaptation of the shape has this benefit on the error estimates too.

\begin{figure}[h]
   \centering
    \includegraphics[width=6cm]{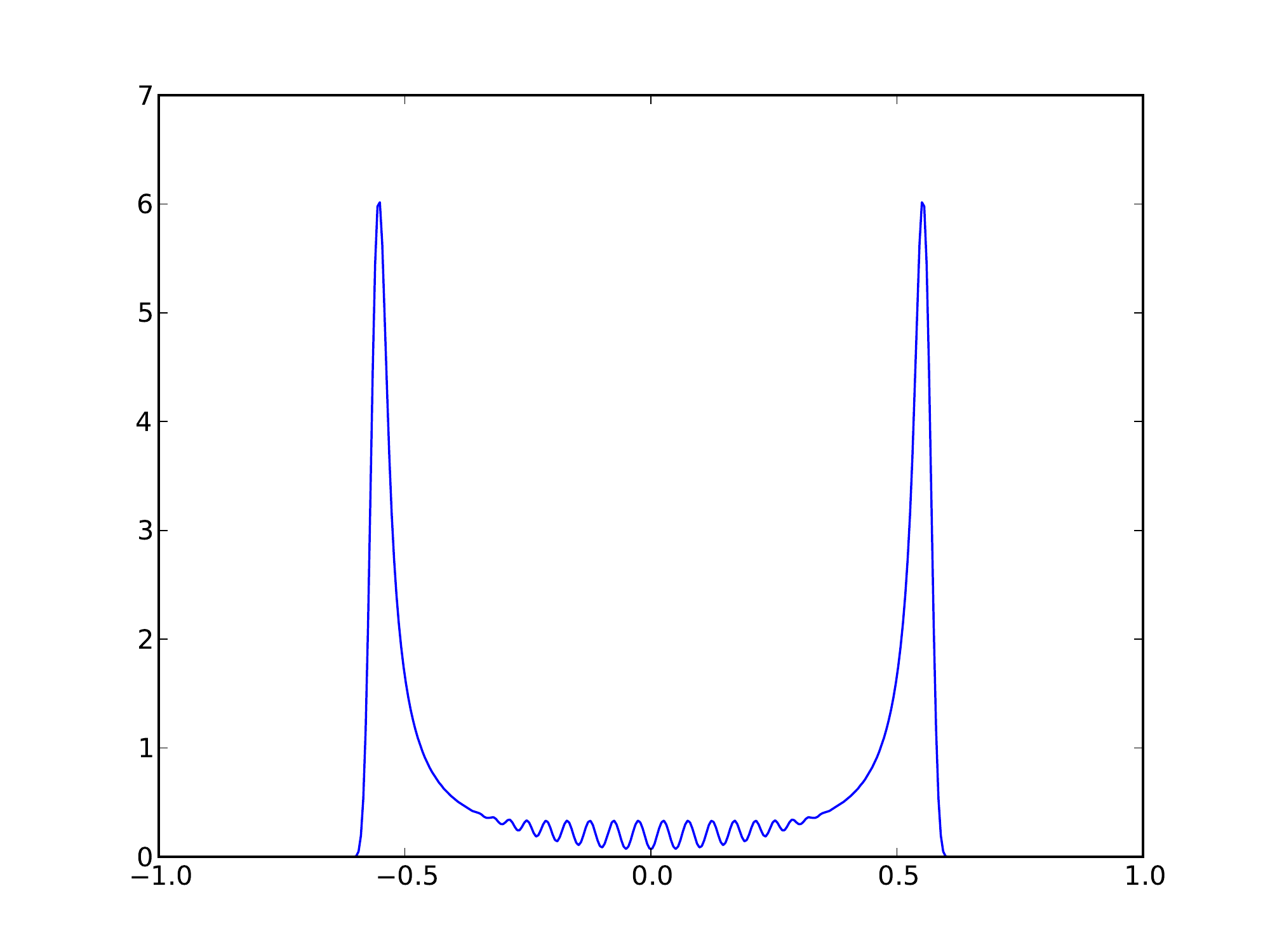}
    \includegraphics[width=6cm]{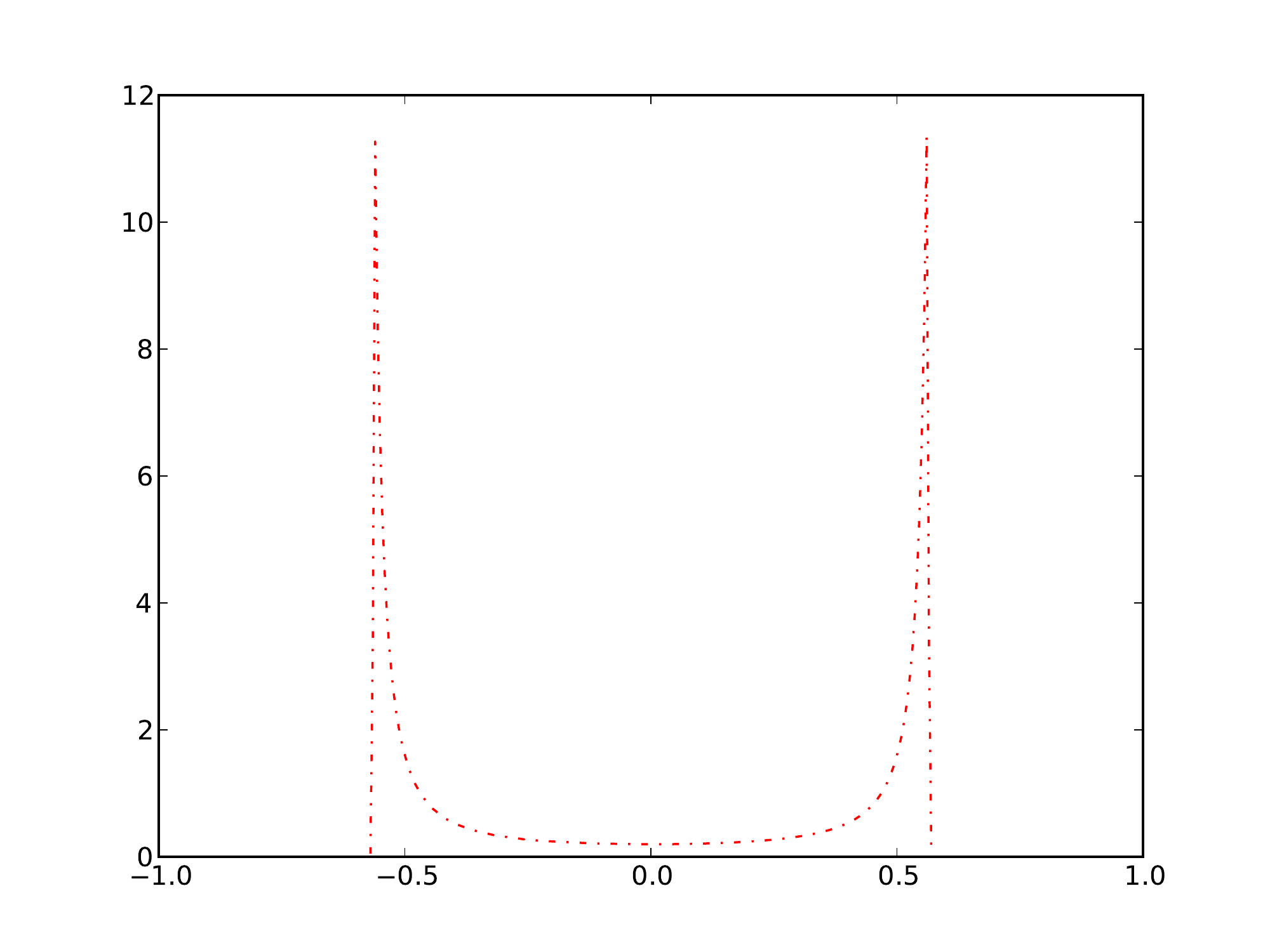}\\
    \includegraphics[width=6cm]{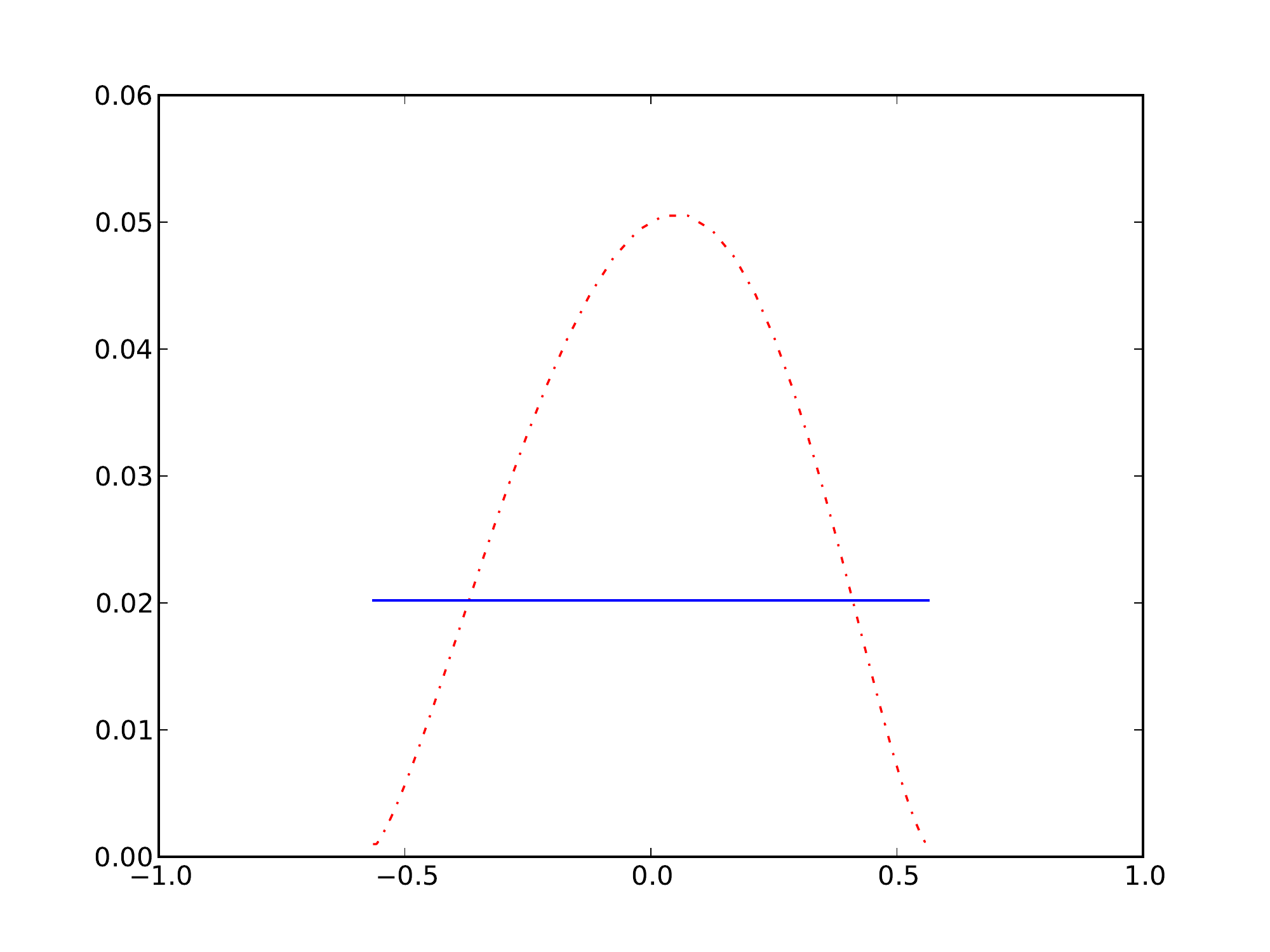}
   \caption{\label{comparison} Densities at steady state for $W(x)=\frac{|x|^a}{a} -\frac{|x|^b}{b}$, with $a=4$, $b=2.5$ and $\rho^0$ given by \eqref{rhoini3}. Left Top: SP method (solid line), right top: LTP method (dotted line), bottom: size of particles (SP versus LTP) with $N=100$.}
\end{figure} 

\appendix
\section{A priori estimates on the regularity of solutions}\label{sec_appendix}
\setcounter{equation}{0}

In this part, we deduce a priori estimates on the regularity of equation \eqref{main-eq} that combined with the global/local in time well posedness theory obtained in \cite{Dob,Golse,BTR,CCH}, leads to the existence of solutions with the desired properties to apply the convergence results of previous sections. 

As we remind the reader in the introduction and in several places along the text, there are two different well-posedness settings: for smooth and for singular potentials. In both cases under the assumptions on the initial data the velocity fields are continuous in time and Lipschitiz continuous in space. In the smooth potential case, this property holds globally in time leading to unique global in time measure solutions \cite{Dob,Golse}. In the singular potential case, this property holds locally in time only since there exist blowing-up of solutions for fully atractive potentials, see \cite{BCL,BTR,CCH}. In both cases, the flow map associated to the velocity field is well-defined and solutions are obtained by pushing forward the initial data through the flow map. 

In this Appendix, we present first a global-in-time  propagation of regularity result in the smooth potential case adapted to our hypotheses on the convergence results. On the other hand, we show a local-in-time  propagation of regularity result in the singular potential case.

\begin{prop} \label{rhoW11}
Assume that the interaction potential $W$ satisfies $\nabla W \in \mw^{1,\infty}(\R^d)$. Let $T >0$ be given and $\rho$ be the unique weak solution to the system \eqref{main-eq} with initial data $\rho^0 \in \mw_+^{1,1}(\R^d)$ obtained in \cite{Dob,Golse}, then
\[
\sup_{0 \leq t \leq T}\|\rho(t,\cdot)\|_{\mw_+^{1,1}} \leq C,
\]
where $C$ is a positive constant depending only on $T$, $L$, and $\|\rho^0\|_{\mw_+^{1,1}}$. Furthermore, if we assume that the initial data $\rho^0 \in (L^\infty \cap \mw_+^{1,1})(\R^d)$, then 
\[
\sup_{0 \leq t \leq T}\|\rho(t,\cdot)\|_{L^\infty \cap \mw^{1,1}_+} \leq C,
\]
where $C$ is a positive constant depending only on $T$, $L$, and $\|\rho^0\|_{L^\infty \cap \mw_+^{1,1}}$.

\end{prop}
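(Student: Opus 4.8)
The plan is to base everything on the Lagrangian representation $\rho(t,x)=\rho^0(F^{t,0}(x))\,j^{t,0}(x)$ and the flow estimates \eqref{majJex}--\eqref{est_reja}, treating the statement as an a priori bound: assuming the solution already lies in $L^\infty(0,T;\mw^{1,1}_+(\R^d))$, I will close a Gronwall inequality for $\|\nabla\rho(t)\|_{L^1}$. The $L^1$ part is immediate: conservation of mass gives $\|\rho(t)\|_{L^1}=\|\rho^0\|_{L^1}=1$ for all $t$, and non-negativity propagates along characteristics, so only $\|\nabla\rho(t)\|_{L^1}$ remains to be controlled. Likewise, once $\rho^0\in L^\infty$ we get $|\rho(t,x)|=\rho^0(F^{t,0}(x))\,j^{t,0}(x)\le \|\rho^0\|_{L^\infty}\,e^{CLt}$ directly from \eqref{est_reja} (equivalently, integrating $\tfrac{d}{dt}\rho(t,X(t))=-(\nabla\cdot u)\rho$ along characteristics), which together with the $\mw^{1,1}_+$ bound yields the second assertion.

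For the gradient I differentiate the Lagrangian formula, $\nabla_x\rho(t,x)=j^{t,0}(x)\,(J^{t,0}(x))^{\sf T}(\nabla\rho^0)(F^{t,0}(x))+\rho^0(F^{t,0}(x))\,\nabla_x j^{t,0}(x)$, and integrate in $x$. In the first summand the change of variables $z=F^{t,0}(x)$, whose Jacobian is exactly $j^{t,0}(x)>0$, produces the contribution $\le\big(\sup_x|J^{t,0}(x)|\big)\|\nabla\rho^0\|_{L^1}\le e^{CLt}\|\nabla\rho^0\|_{L^1}$ by \eqref{majJex}. The second summand requires a pointwise bound on $\nabla_x j^{t,0}$: from \eqref{exp_ja} one has, up to sign, $\log j^{t,0}(x)=\int_0^t(\Delta W*\rho(\tau))(F^{t,\tau}(x))\,d\tau$, hence $\nabla_x j^{t,0}(x)=j^{t,0}(x)\int_0^t(J^{t,\tau}(x))^{\sf T}\big(\Delta W*\nabla\rho(\tau)\big)(F^{t,\tau}(x))\,d\tau$ (moving one derivative onto $\rho$, which is licit since $\Delta W\in L^\infty$ and $\nabla\rho(\tau)\in L^1$), so that $\sup_x|\nabla_x j^{t,0}(x)|\le C\,e^{CLt}\int_0^t\|\nabla\rho(\tau)\|_{L^1}\,d\tau$ using $\|\Delta W\|_{L^\infty}\le C\|\nabla W\|_{\mw^{1,\infty}}$ and \eqref{est_reja}. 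Combining this with $\int_{\R^d}\rho^0(F^{t,0}(x))\,dx=\int_{\R^d}\rho^0(z)\,j^{0,t}(z)\,dz\le e^{CLt}\|\rho^0\|_{L^1}$ (another change of variables, now with $\|j^{0,t}\|_{L^\infty}\le e^{CLt}$), I arrive at $\|\nabla\rho(t)\|_{L^1}\le e^{CLt}\|\nabla\rho^0\|_{L^1}+C\,e^{2CLt}\int_0^t\|\nabla\rho(\tau)\|_{L^1}\,d\tau$, and Gronwall's lemma yields $\sup_{[0,T]}\|\nabla\rho(t)\|_{L^1}\le C$ with $C$ depending only on $T$, $L$, $\|\rho^0\|_{\mw^{1,1}}$.

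The main obstacle is the rigorous justification of these formal computations — differentiating the characteristic formula and performing the ``integration by parts'' $\nabla(\Delta W*\rho)=\Delta W*\nabla\rho$ at the stated regularity. This is handled either by mollifying $\rho^0$, establishing the estimate for smooth data (where $F^{t,0}\in C^1$, $\rho\in C^1$) and passing to the limit, or — in keeping with the a priori nature of Proposition~\ref{rhoW11}, which is then combined with the well-posedness theory of \cite{Dob,Golse} to produce a genuine solution — by assuming $\rho\in L^\infty(0,T;\mw^{1,1}_+)$ from the outset and letting Gronwall upgrade this qualitative hypothesis into the quantitative bound. A secondary, purely bookkeeping point is that every constant reduces to $L=C\|\nabla W\|_{\mw^{1,\infty}}$ and $\|\Delta W\|_{L^\infty}\le C\|\nabla W\|_{\mw^{1,\infty}}$, so no quantity beyond $T$, $L$ and $\|\rho^0\|_{\mw^{1,1}}$ (resp. $\|\rho^0\|_{L^\infty\cap\mw^{1,1}}$) enters the final estimates; note also that, unlike a transport-equation energy estimate for $\nabla\rho$, the Lagrangian route above does not require the $L^\infty$ bound on $\rho$ to control $\|\nabla\rho\|_{L^1}$, which is why the two assertions decouple cleanly.
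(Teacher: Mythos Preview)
Your argument is correct and yields the stated bounds, but it follows a genuinely different route from the paper. The paper proceeds in Eulerian fashion: it differentiates the continuity equation to obtain a transport equation for $\nabla\rho$, multiplies by $\nabla\rho/|\nabla\rho|$, and integrates to get the clean differential inequality $\tfrac{d}{dt}\|\nabla\rho\|_{L^1}\le 2L\|\nabla\rho\|_{L^1}$, using $\|\nabla u\|_{L^\infty}\le L$ and $\|\nabla(\nabla\cdot u)\|_{L^\infty}\le L\|\nabla\rho\|_{L^1}$. You instead differentiate the Lagrangian representation $\rho(t,x)=\rho^0(F^{t,0}(x))\,j^{t,0}(x)$ directly and control the two resulting terms by change of variables and a pointwise bound on $\nabla_x j^{t,0}$, leading to an integral Gronwall inequality. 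Your approach trades the (formally delicate) multiplication by the sign of $\nabla\rho$ for an explicit computation of $\nabla_x j^{t,0}$ via \eqref{exp_ja}; the paper's route gives a sharper constant ($e^{2LT}$ rather than a stacked exponential) and extends more readily to the $L^p$ setting used in Proposition~\ref{prop_lp}, while your route makes the dependence on the flow bounds \eqref{majJex}--\eqref{est_reja} fully transparent and sidesteps the sign-function issue. For the $L^\infty$ part both proofs coincide: integrate $\partial_t\rho(t,F^{0,t}(x))=-(\nabla\cdot u)\rho$ along characteristics.
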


\begin{proof} It follows from the conservation of mass and our assumption on the initial data $\rho^0$ that
\[
\int_{\R^d} \rho(t,x) \,dx = \int_{\R^d} \rho^0(x) \,dx = 1.
\]
For the estimate of $\|\rho\|_{L^\infty(0,T;\dot{\mw}^{1,1})}$, we take $\nabla$ to \eqref{main-eq} to get
\begin{align}\label{eq_diff}
\begin{aligned}
\pa_t \nabla \rho(t,x) &+ D^2 \rho(t,x) u(t,x) + \nabla u(t,x) \nabla \rho(t,x) \cr
&\quad + \nabla(\nabla \cdot u(t,x)) \rho(t,x) + \nabla \cdot u(t,x) \nabla \rho(t,x) = 0.
\end{aligned}
\end{align}
We next multiply \eqref{eq_diff} by $\nabla \rho(t,x)/|\nabla \rho(t,x)|$ to obtain
\begin{align}\label{est_ext1}
\begin{aligned}
\partial_t |\nabla \rho| &+ u\cdot \nabla |\nabla \rho(t,x)| + \nabla \cdot  u|\nabla\rho(t,x)| = \cr & \quad - \nabla u \nabla \rho \cdot \frac{\nabla \rho}{|\nabla \rho|} - \nabla(\nabla \cdot u) \rho \cdot \frac{\nabla \rho}{|\nabla \rho|},
\end{aligned}
\end{align}
due to the symmetry of $D^2 \rho$. By integrating \eqref{est_ext1} over $\R^d$ and using integration by parts, we deduce
\begin{align}\label{est_ext2}
\begin{aligned}
\frac{d}{d t}\int_{\R^d}|\nabla \rho|\,dx = &-\int_{\R^d} \nabla u \nabla \rho \cdot \frac{\nabla \rho}{|\nabla \rho|}\,dx \cr &- \int_{\R^d} \nabla(\nabla \cdot u) \rho \cdot \frac{\nabla \rho}{|\nabla \rho|}\,dx\leq 2L |\nabla \rho|\,dx,
\end{aligned}
\end{align}
where we used $\|\nabla u(t,x)\|_{L^\infty} \leq \|\nabla W\|_{\mw^{1,\infty}} =L$ and
\[
\|\nabla(\nabla \cdot u)\|_{L^\infty} \leq L\int_{\R^d} |\nabla \rho|\,dx.
\]
Thus we have
\[
\sup_{0 \leq t \leq T}\|\nabla \rho(t,\cdot)\|_{L^1} \leq \|\nabla \rho^0\|_{L^1}\exp\lt(2LT\rt).
\]
Finally, we estimate $\|\rho\|_{L^\infty}$. For this, we recall that the flow map $F^{0,t}(x)$ satisfies
\[
\frac{d F^{0,t}(x)}{dt} = u(t, F^{0,t}(x)) \quad \mbox{with} \quad F^{0,0}(x) = x.
\]
Using that $\rho(t)= F^{0,t}\#\rho^0$, we can write
\[
\frac{\pa}{\pa t}\rho(t,F^{0,t}(x)) = -\nabla \cdot u(t,F^{0,t}(x))\rho(t,F^{0,t}(x)),
\]
and this yields
\[
\rho(t,F^{0,t}(x)) = \rho^0(x)\exp\left(-\int_0^t \nabla \cdot u(s,F^{0,s}(x))\,ds\right).
\]
Since $u \in \mw^{1,\infty}(\R^d)$, we obtain 
\beq\label{est_rho_inf}
\sup_{0 \leq t \leq T}\|\rho(t,\cdot)\|_{L^\infty} \leq \|\rho^0\|_{L^\infty}\exp \left( LT\right).
\eeq
This completes the proof.
\end{proof}

\begin{rem} If we further assume that $\rho^0 \in \mw_+^{1,\infty}(\R^d)$, we have
\[
\sup_{0 \leq t \leq T}\|\rho(t,\cdot)\|_{\mw^{1,\infty}} \leq C,
\]
where $C$ is a positive constant depending only on $T$, $L$, and $\|\rho^0\|_{\mw^{1,\infty}_+}$. Indeed, we can similarly find from \eqref{eq_diff} that for $i =1,\cdots, d$
$$\begin{aligned}
\frac{\pa}{\pa t}\pa_i \rho(t,F^{0,t}(x)) &= -\pa_i u(t,F^{0,t}(x))\nabla  \rho(t,F^{0,t}(x)) -\nabla \cdot u(t,F^{0,t}(x)) \pa_i  \rho(t,F^{0,t}(x)) \cr
&\quad - \rho(t,F^{0,t}(x))\nabla \cdot \pa_i u(t,F^{0,t}(x)).
\end{aligned}$$
This implies
$$\begin{aligned}
\|\nabla \rho(t,\cdot)\|_{L^\infty} &\leq \|\nabla \rho_0\|_{L^\infty} \exp\lt(C\int_0^t \|\nabla u(s,\cdot)\|_{L^\infty}\,ds \rt) \cr
&\quad + \exp\lt(C\int_0^t \|\nabla u(s,\cdot)\|_{L^\infty}\,ds \rt)\int_0^t \|\rho(s,\cdot)\|_{L^\infty}\|D^2 u(s,\cdot)\|_{L^\infty}\, ds,\cr
&\leq C\|\nabla \rho_0\|_{L^\infty} + C\int_0^t \|D^2 u(s,\cdot)\|_{L^\infty}\,ds
\end{aligned}$$
where we used $u \in \mw^{1,\infty}(\R^d)$ and the estimate \eqref{est_rho_inf}. On the other hand, $\|D^2 u(s,\cdot)\|_{L^\infty}$ can be estimated by
\[
\|D^2 u(s,\cdot)\|_{L^\infty} \leq \|\nabla W\|_{\mw^{1,\infty}}\|\nabla \rho(s,\cdot)\|_{L^\infty}.
\]
Hence, we have 
\[
\|\nabla \rho(t,\cdot)\|_{L^\infty} \leq C\|\nabla \rho_0\|_{L^\infty} + C\int_0^t \|\nabla \rho(s,\cdot)\|_{L^\infty}\,ds,
\]
and by applying Gronwall's inequality to conclude the desired result. Similar arguments were used in \cite{BCLR} to construct classical solutions.
\end{rem}

We next provide the a priori estimate of solutions to the system \eqref{main-eq} in $\mw^{1,1}_+(\R^d) \cap \mw^{1,p}(\R^d)$. For notational simplicity, we set 
\[
\widetilde{\mw}^{k,p}_+(\R^d) := \mw^{k,1}_+(\R^d) \cap \mw^{k,p}(\R^d) \quad \mbox{for} \quad k \geq 0.
\]

\begin{prop}\label{prop_lp} Assume that the interaction potential $W$ satisfies \eqref{hypW} for some $1 \leq q \leq \frac{d}{\alpha + 1}$. Let $\rho$ be the unique local-in-time solution to \eqref{main-eq} constructed in \cite{CCH} with initial data $\rho^0$ satisfying $\rho^0 \in (L^\infty \cap \widetilde\mw_+^{1,p})(\R^d)$ where $p$ is the Sobolev conjugate of $q$. Then there exists a $T^* > 0$ such that
\[
\sup_{0 \leq t \leq T^*}\|\rho(t,\cdot)\|_{\widetilde\mw^{1,p}_+} \leq C,
\]
where $C$ is a positive constant depending only on $T^*$, $\alpha$, $p$, and $\|\rho^0\|_{\widetilde\mw^{1,p}_+}$.
\end{prop}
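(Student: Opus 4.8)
The plan is to derive the required bounds in a hierarchy, the driving observation being that for a singular potential the divergence $\nabla\cdot u = -\Delta W*\rho$ is still controlled in $L^\infty$, namely $\|\nabla\cdot u(t)\|_{L^\infty}\le C(\|\rho(t)\|_{L^1}+\|\rho(t)\|_{L^p})$ by \eqref{est-Du}, while the total mass $\|\rho(t)\|_{L^1}=1$ is conserved along the push-forward representation $\rho(t)=F^{0,t}\#\rho^0$. First I would close an estimate for $\|\rho(t)\|_{L^p}$: multiplying \eqref{main-eq} by $p\rho^{p-1}$ and integrating by parts gives $\frac{d}{dt}\|\rho\|_{L^p}^p=-(p-1)\int_{\R^d}(\nabla\cdot u)\rho^p\,dx$, hence, with \eqref{est-Du} and $\|\rho\|_{L^1}=1$,
$$\frac{d}{dt}\|\rho(t)\|_{L^p}\le C\bigl(1+\|\rho(t)\|_{L^p}\bigr)\|\rho(t)\|_{L^p}.$$
This Riccati-type inequality produces a bound $\sup_{[0,T^*]}\|\rho(t)\|_{L^p}\le C$ for $T^*>0$ small enough, depending only on $\|\rho^0\|_{L^p}$ and the constant above (and not exceeding the existence time of \cite{CCH}); this local-in-time nature is unavoidable and consistent with finite-time blow-up for attractive potentials. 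The $L^\infty$ bound then follows exactly as in the proof of Proposition~\ref{rhoW11}, from $\rho(t,F^{0,t}(x))=\rho^0(x)\exp\bigl(-\int_0^t(\nabla\cdot u)(s,F^{0,s}(x))\,ds\bigr)$ together with the $L^\infty$ control of $\nabla\cdot u$ just obtained on $[0,T^*]$.

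Next I would propagate the gradient bounds by differentiating \eqref{main-eq} as in \eqref{eq_diff}, which yields for each component a transport equation for $\partial_i\rho$ with source terms $\nabla u\,\nabla\rho$ and $\rho\,\nabla(\nabla\cdot u)$. The point that differs from the smooth case is that $\nabla(\nabla\cdot u)$ is no longer bounded; instead one writes $\nabla(\nabla\cdot u)=-\Delta W*\nabla\rho$ and splits the kernel $\Delta W$ (like $D^2W$) into its near-origin part, which is integrable since the singularity $|x|^{-(1+\alpha)}$ obeys $1+\alpha<d$ and lies in $L^q$ near $0$ by the standing hypothesis $q\le d/(\alpha+1)$, plus a bounded compactly supported far-field part (using that $W$ is cut off at infinity). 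Young's inequality then gives $\|\nabla(\nabla\cdot u)\|_{L^p}\le C_W\|\nabla\rho\|_{L^p}$. Multiplying the equation for $\nabla\rho$ by $|\nabla\rho|^{p-2}\nabla\rho$, summing over $i$ and integrating by parts (the transport term contributing $-\tfrac1p\int(\nabla\cdot u)|\nabla\rho|^p$) gives
$$\frac{d}{dt}\|\nabla\rho\|_{L^p}\le C\bigl(\|\nabla u\|_{L^\infty}+\|\nabla\cdot u\|_{L^\infty}\bigr)\|\nabla\rho\|_{L^p}+C\|\rho\|_{L^\infty}\|\nabla(\nabla\cdot u)\|_{L^p},$$
whose right-hand side is $\le C(1+\|\nabla\rho\|_{L^p})$ with $C$ depending only on the quantities $\|\rho\|_{L^1\cap L^p\cap L^\infty}$ already controlled on $[0,T^*]$; Gronwall then gives $\sup_{[0,T^*]}\|\nabla\rho\|_{L^p}\le C$. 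Finally, multiplying the same equation by $\nabla\rho/|\nabla\rho|$ and integrating as in \eqref{est_ext2} yields $\frac{d}{dt}\|\nabla\rho\|_{L^1}\le C\|\nabla\rho\|_{L^1}+\|\rho\,\nabla(\nabla\cdot u)\|_{L^1}$, and bounding the last term by $\|\rho\|_{L^q}\|\nabla(\nabla\cdot u)\|_{L^p}\le C\|\nabla\rho\|_{L^p}$ (using $\rho\in L^1\cap L^\infty\subset L^q$ and the previous step), a last Gronwall closes the $\mw^{1,1}$ estimate. Collecting the four bounds on $[0,T^*]$ proves the proposition.

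The main obstacle I expect is the rigorous handling of the singular convolution kernels: proving that $\Delta W*$ and $D^2W*$ act boundedly into $L^p$ (respectively produce $L^1$-controllable terms) although $D^2W\notin L^1(\R^d)$. This is exactly where the hypothesis $q\le d/(\alpha+1)$ (equivalently the choice $p=q'$) and the far-field cut-off of $W$ are used. A secondary, routine point is that the manipulations on the differentiated equation are only formal for the solution class at hand and must be justified by regularization or by working with finite differences of the flow-map representation $\rho(t)=F^{0,t}\#\rho^0$, as is standard.
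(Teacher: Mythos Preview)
Your plan is correct and follows the same overall strategy as the paper: differentiate the continuity equation, estimate the source terms via convolution bounds on the singular kernel, and close by a Riccati/Gronwall argument. The two proofs differ mainly in how the critical term $\rho\,\nabla(\nabla\cdot u)$ is handled and in how the estimates are organized.

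The paper does not introduce the intermediate $L^\infty$ bound on $\rho$, nor does it invoke a cutoff of $W$ at infinity. Instead, it writes $D^2 u = -D^2W*\nabla\rho$ and repeats the splitting argument of \eqref{est-Du} with $\nabla\rho$ in place of $\rho$, obtaining $\|D^2 u\|_{L^\infty}\le C\|\nabla\rho\|_{L^1\cap L^p}$ directly from the hypotheses \eqref{hypW}--\eqref{hypW2}. The $\rho\,\nabla(\nabla\cdot u)$ contribution is then bounded in $L^p$ by $\|D^2u\|_{L^\infty}\|\rho\|_{L^p}$ and in $L^1$ by $\|D^2u\|_{L^\infty}\|\rho\|_{L^1}$, so all four pieces $\|\rho\|_{L^1}$, $\|\rho\|_{L^p}$, $\|\nabla\rho\|_{L^1}$, $\|\nabla\rho\|_{L^p}$ are estimated simultaneously and summed into a single Riccati inequality $\tfrac{d}{dt}\|\rho\|_{\widetilde\mw^{1,p}_+}\le C\|\rho\|_{\widetilde\mw^{1,p}_+}^2$. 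Your route---bounding $\|\nabla(\nabla\cdot u)\|_{L^p}$ by Young's inequality and then cascading $L^p\to L^\infty\to \dot\mw^{1,p}\to \dot\mw^{1,1}$---also works, but your Young step $\|\Delta W*\nabla\rho\|_{L^p}\le C\|\nabla\rho\|_{L^p}$ genuinely needs $\Delta W\in L^1(\R^d)$, which is why you invoke the far-field cutoff; under \eqref{hypW} alone the far-field part of $D^2W$ is only $L^\infty$, so without the cutoff your $\dot\mw^{1,p}$ and $\dot\mw^{1,1}$ estimates would become coupled (the far-field piece contributes $C\|\nabla\rho\|_{L^1}$ via $L^\infty*L^1\to L^\infty$), bringing you back to the paper's combined Riccati. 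Either organization is fine; the paper's is slightly more economical since it avoids both the $L^\infty$ step and the cutoff assumption.
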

\begin{proof} 
The local-in-time well-posedness theory in \cite{CCH} that
\beq\label{est_ap_s}
\frac{d}{dt}\|\rho\|_{\widetilde\mw^{0,p}_+} \leq C\|\rho\|_{\widetilde\mw^{0,p}_+}^2\,.
\eeq
It also follows from \eqref{eq_diff}-\eqref{est_ext2} that
\[
\frac{d}{dt}\|\nabla \rho\|_{L^1} \lesssim \|\rho\|_{\widetilde\mw^{0,p}_+}\|\nabla \rho\|_{L^1} + \|\nabla  \rho\|_{\widetilde\mw^{0,p}_+} \leq \|\rho\|_{\widetilde\mw^{1,p}_+}^2,
\]
where we used $\|D^k u(t,x)\|_{L^\infty} \leq C\|D^{k-1} \rho\|_{\widetilde\mw^{0,p}_+}$ for $k \geq 1$ and $\|\rho\|_{\widetilde\mw^{0,p}_+} \geq 1$. For the estimate of $\|\nabla \rho\|_{L^p}$, we obtain
$$\begin{aligned}
\frac{d}{dt}\int_{\R^d} |\nabla \rho|^pdx &= -p\int_{\R^d} |\nabla \rho|^{p-2}\nabla \rho  \cdot \left( D^2 \rho u  + \nabla u \nabla \rho + \nabla(\nabla \cdot u) \rho + \nabla \cdot u \nabla \rho\right)dx\cr
&= (a) + (b) + (c) + (d),
\end{aligned}$$
where $(a), (b), (c)$, and $(d)$ are estimated as follows.
$$\begin{aligned}
(a) & = -\int_{\R^d} u \cdot \nabla |\nabla \rho|^pdx = \int_\R \nabla\cdot u |\nabla \rho|^pdx \lesssim \|\rho\|_{\widetilde\mw^{0,p}_+}\|\nabla \rho\|_{L^p}^p,\cr
(b) &\leq p\int_{\R^d} |\nabla u| |\nabla \rho|^pdx \lesssim\|\rho\|_{\widetilde\mw^{0,p}_+}\|\nabla \rho\|_{L^p}^p,\cr
(c) &\leq p\|\nabla^2 u\|_{L^\infty}\|\rho\|_{L^p}\|\nabla \rho\|_{L^p}^{p-1} \lesssim \|\nabla \rho\|_{\widetilde\mw^{0,p}_+}\|\rho\|_{L^p}\|\nabla \rho\|_{L^p}^{p-1},\cr
(d) &\leq p\int_{\R^d} |\nabla \cdot u| |\nabla \rho|^pdx \lesssim\|\rho\|_{\widetilde\mw^{0,p}_+}\|\nabla \rho\|_{L^p}^p.
\end{aligned}$$
Thus, we get
\beq\label{est_ap_s2}
\frac{d}{dt}\|\nabla \rho\|_{L^p} \leq C\|\rho\|_{\widetilde\mw^{1,p}_+}^2.
\eeq
Now, we combine \eqref{est_ap_s} and \eqref{est_ap_s2} to deduce
\[
\frac{d}{dt}\|\rho\|_{\widetilde\mw^{1,p}_+} \leq C\|\rho\|_{\widetilde\mw^{1,p}_+}^2,
\]
and this concludes that there exists a $T^*>0$ such that
\[
\sup_{0 \leq t \leq T}\|\rho(t,\cdot)\|_{\widetilde\mw^{1,p}_+} \leq C,
\]
where $C$ is a positive constant depending only on $T^*$, $\alpha$, $p$, and $\|\rho^0\|_{\widetilde\mw^{1,p}_+}$.

\end{proof}

%
%
%
%

\section*{Acknowledgments}
\small{JAC was partially supported by the project MTM2011-27739-C04-02
DGI (Spain) and from the Royal Society by a Wolfson
Research Merit Award. JAC and YPC were supported by EPSRC grant with reference
EP/K008404/1. This work was partially done when FC was visiting
Imperial College funded by the EPSRC EP/I019111/1 (platform grant).}

%
%
%
%

\end{document}